\documentclass[reqno,11pt]{amsart}
\usepackage[foot]{amsaddr}
\usepackage{bbold}
\usepackage{charter}
\usepackage[margin=1in]{geometry}
\usepackage[colorlinks=true,linktoc=all,linkcolor=blue,citecolor=blue]{hyperref}

\theoremstyle{plain}
\newtheorem{theorem}{Theorem}[section]
\newtheorem*{theorem*}{Theorem}
\newtheorem{proposition}{Proposition}[theorem]
\newtheorem{lemma}[theorem]{Lemma}
\newtheorem{corollary}[theorem]{Corollary}
\theoremstyle{definition}

\newtheorem*{definition*}{Definition}
\newtheorem{remark}[theorem]{Remark}

\newtheorem{conjecture}[theorem]{Conjecture}
\numberwithin{equation}{section}
\everymath{\displaystyle}
\allowdisplaybreaks

%-------------------------------------------------------------------------------------------------
% Commands
%-------------------------------------------------------------------------------------------------
\newcommand{\B}{\mathbb{B}}
\newcommand{\C}{\mathbb{C}}
\newcommand{\D}{\mathbb{D}}
\newcommand{\N}{\mathbb{N}}

\newcommand{\Bloch}{\mathcal{B}}
\newcommand{\lilBloch}{\mathcal{B}_0}
\newcommand{\norm}[1]{\left|\left|#1\right|\right|}
\newcommand{\supnorm}[1]{\norm{#1}_\infty}
\newcommand{\blochnorm}[1]{\norm{#1}_\Bloch}
\newcommand{\ip}[1]{\left\langle #1 \right\rangle}
\newcommand{\conj}[1]{\overline{#1}}
\newcommand{\Aut}{\mathrm{Aut}}
\newcommand{\Log}{\mathrm{Log}}
\renewcommand{\mod}[1]{\left|#1\right|}

%-------------------------------------------------------------------------------------------------
% Document Settings
%-------------------------------------------------------------------------------------------------
\title[Weighted Composition Operators]
{Weighted Composition Operators on the Bloch Space\\ of a Bounded Homogeneous Domain}

\author{Robert F. Allen\textsuperscript{1} and Flavia Colonna\textsuperscript{2}}
\address{\textsuperscript{1}Department of Mathematics, University of Wisconsin--La Crosse}
\address{\textsuperscript{2}Department of Mathematical Sciences, George Mason University}
\email{allen.rob3@uwlax.edu, fcolonna@uwlax.edu}

\dedicatory{In honor of Israel Gohberg on the occasion of his 80th birthday.}
\subjclass[2010]{primary: 47B38, seconday: 32A18, 30D45}
\keywords{Weighted composition operators, Bloch space, Homogeneous domains}

\begin{document}

%-------------------------------------------------------------------------------------------------
% Abstract
%-------------------------------------------------------------------------------------------------
\begin{abstract} In this paper, we present the current results in the study of weighted composition operators on the Bloch space of bounded homogeneous domains in $\C^n$ with particular emphasis on the issues of boundedness and compactness.  We also discuss the bounded and the compact weighted composition operators from the Bloch space to the Hardy space $H^\infty$.
\end{abstract}

\maketitle

%-------------------------------------------------------------------------------------------------
% Introduction
%-------------------------------------------------------------------------------------------------
\section{Introduction}\label{section:introduction}
Let $X$ be a Banach space of holomorphic functions on a domain $\Omega \subset \C^n$.  For $\psi$ a holomorphic function on $\Omega$ and $\varphi$ a holomorphic self-map of $\Omega$, the linear operator defined by $$W_{\psi,\varphi}(f) = \psi(f\circ\varphi),\quad f \in X,$$ is called the \emph{weighted composition operator} with symbols $\psi$ and $\varphi$.  Observe that $W_{\psi,\varphi}(f) = M_\psi C_\varphi(f)$ where $M_\psi(f) = \psi f$ is the \emph{multiplication operator} with symbol $\psi$ and $C_\varphi(f) = f\circ\varphi$ is the \emph{composition operator} with symbol $\varphi$.  If $\psi$ is identically 1, then $W_{\psi,\varphi} = C_\varphi$, and if $\varphi$ is the identity, then $W_{\psi,\varphi} = M_\psi$.

The study of weighted composition operators is fundamental in the study of Banach and Hilbert spaces of holomorphic
functions.  The study of the geometry of a space $X$ is centered on the identification of the isometries on $X$.  The connection between weighted composition operators and isometries can be traced back to Banach himself.  In \cite{Banach:32}, Banach proved that the surjective isometries on $C(Q)$, the space of continuous real-valued functions on a compact metric space $Q$, are of the form $Tf = \psi(f\circ\varphi),$ where $\mod{\psi} \equiv 1$ and $\varphi$ is a homeomorphism of $Q$ onto itself.

Although the characterization of isometries is an open problem for most Banach spaces of holomorphic functions, there are many spaces for which the isometries are known.  In \cite{Forelli:64}, Forelli proved that the isometries on the Hardy space $H^p$ of the open unit disk $\D$ (for $p \neq 2$) are certain weighted composition operators.  On the Bergman space $A^p$ of $\D$, Kolaski showed that the surjective isometries are weighted composition operators \cite{Kolaski:82}.  El-Gebeily and Wolfe showed that the isometries on the disk algebra are also weighted composition operators \cite{ElGebeilyWolfe:85}. Thus the weighted composition operator plays a central role in the study of the isometries on several spaces of holomorphic functions.

The first study of the isometries on the Bloch space was made by Cima and Wogen in \cite{CimaWogen:80}.  They analyzed the isometries on the subspace of the Bloch space of the open unit disk whose elements fix the origin. On this space, they showed that the surjective isometries are normalized compressions of weighted composition operators induced by disk automorphisms. In \cite{KrantzMa:1989}, Krantz and Ma extended their results to the Bloch space of the unit ball in $\C^n$. However, in any dimension, a description of all isometries on the entire set of Bloch functions is still unknown.

The study of weighted composition operators is not limited to the study of isometries.  The properties of the weighted composition operators are not solely determined by the component operators, namely multiplication and composition operators.  Indeed, there exist bounded weighted composition operators on the Bloch space for which the associated multiplication operator is not bounded. Likewise, there are compact weighted composition operators for which neither component operator is compact. Examples of such operators were provided by Ohno and Zhao in \cite{OhnoZhao:01} in the one-dimensional case.  In Sections 5 and 6, we give analogous examples for the unit ball and the unit polydisk in $\C^n$. Thus, the study of weighted composition operators is truly an evolutionary step in the field of composition operators.

\subsection{Purpose of the paper}
From the previous discussion, it is clear that the study of weighted composition operators is a worthwhile endeavor.  A primary purpose of this paper is to bring the current results on the weighted composition operators on the Bloch space to one location.  There are still many open questions, and thus opportunities for active research.  Thus, our hope is that this exposition will inspire more work in this area.  To this end, we add to this paper some new results, accompanied by some conjectures and areas for future research.

\subsection{Organization of the paper}
In Section 2, we review the notion of the Bloch space on the unit disk $\D$ and on bounded homogeneous domains.  In Section 3, we outline the results known on weighted composition operators on the Bloch space and little Bloch space of $\D$.  These include the characterization of the bounded and the compact operators due to Ohno and Zhao and operator norm estimates.

In Section 4, we present the known results on the weighted composition operators on the Bloch space in higher dimensions.  For a bounded homogeneous domain $D$ we define quantities which we believe are proper candidates to characterize the bounded and the compact weighted composition operators on the Bloch space of $D$ and on a subspace we refer to as the {\it $*$-little Bloch space}, which is a higher-dimensional analogue of the little Bloch space. We give sufficient conditions for boundedness and compactness and give operator norm estimates.

In Sections 5 and 6, we prove the conjectures presented in Sections~4 and 5 for the Bloch space on the unit ball and unit polydisk which yield results equivalent to Corollaries 1.4 and 1.6 of \cite{ZhouChen:05} and Theorems 1 and 2 of \cite{ZhouChen-I:05}.

In Section~7, we characterize the bounded weighted composition operators from the Bloch space and the $*$-little Bloch space into the space of bounded holomorphic functions on a bounded homogeneous domain and determine the norm of such operators. As a special case, we obtain Theorem~1 of \cite{LiStevic:08}. We also prove an extension of Theorem~6.1 of
\cite{HosokawaIzuchiOhno:05} to the unit polydisk.

Finally, in Section 8 we discuss further developments and open problems for the weighted composition operators on the Bloch space of a bounded homogeneous domain.

%-------------------------------------------------------------------------------------------------
% The Bloch Space
%-------------------------------------------------------------------------------------------------
\section{The Bloch Space}\label{section:bloch space}
The Bloch space has been defined on many types of domains in $\C^n$.  The first such domain we will consider is the open unit disk $\D$. A complex-valued function $f$ analytic on $\D$ is said to be \emph{Bloch} if $$\beta_f := \sup_{z \in \D}\;(1-\mod{z}^2)\mod{f'(z)} < \infty.$$  The mapping $f \mapsto \beta_f$ is a semi-norm on the space $\Bloch(\D)$ of Bloch functions on $\D$ and $\Bloch(\D)$ is a Banach space under the Bloch norm $$\blochnorm{f} = \mod{f(0)} + \beta_f.$$

By the Schwarz-Pick lemma, the space $H^\infty(\D)$ of bounded analytic functions on $\D$ is a subset of $\Bloch(\D)$ and the containment is proper, since $z \mapsto \Log (1-z)$ is a Bloch function, where $\Log$ denotes the principal branch of the logarithm.  The \emph{little Bloch space} $\lilBloch(\D)$ on $\D$ is defined as the set of Bloch functions $f$ such that $$\lim_{\mod{z} \to 1}\;(1-\mod{z}^2)\mod{f'(z)} = 0.$$  The little Bloch space is a separable subspace of $\Bloch(\D)$ since the polynomials form a dense subset.  Useful references on Bloch functions and the Bloch space on $\D$ include \cite{Pommerenke:70}, \cite{AndersonCluniePomerenke:74} and \cite{Cima:79}.

As an immediate consequence of the Schwarz-Pick lemma, if $f \in \Bloch(\D)$ and $\varphi$ is an analytic self-map of $\D$, then $f\circ\varphi \in \Bloch(\D)$.  Furthermore, $\beta_f = \beta_{f\circ\varphi}$ for any conformal automorphism $\varphi$ of $\D$, that is, the Bloch space is \emph{M\"obius invariant}.  In fact, it is the largest M\"obius invariant Banach space \cite{Tjani:96}.

The notion of Bloch function in higher dimensions was introduced by Hahn in \cite{Hahn:75}.  In \cite{Timoney:80-I} and \cite{Timoney:80-II} Timoney studied extensively the space of Bloch functions on a bounded homogeneous domain and its subspace known as the little Bloch space on a bounded symmetric domain.

Every bounded domain $D \subset \C^n$ is endowed with a canonical metric called the \emph{Bergman metric}, which is invariant under the action of the group of biholomorphic transformations, which we call \emph{automorphisms} and denote by $\Aut(D)$ \cite{Helgason:62}.  We will focus on a particular class of domains in $\C^n$, the homogeneous domains.  A domain $D$ in $\C^n$ is called \emph{homogeneous} if $\Aut(D)$ acts transitively on $D$, that is, for all $z_1,z_2 \in D$, there exists $\phi \in \Aut(D)$ such that $\phi(z_1) = z_2$.

A domain $D \subset \C^n$ is \emph{symmetric at a point $z_0 \in D$} if there exists $\phi \in \Aut(D)$ such that $\phi\circ\phi$ is the identity and $z_0$ is an isolated fixed point of $\phi$.  A domain is \emph{symmetric} if it is symmetric at each of its points.  A symmetric domain is homogeneous and a homogeneous domain that is symmetric at a single point is symmetric.  Therefore the unit ball $\B_n$ and the unit polydisk $\D^n$ are symmetric, since they are homogeneous and symmetric at the origin via $z \mapsto -z$.

Let $D$ be a bounded homogeneous domain in $\C^n$.  A holomorphic function $f:D \to \C$ is said to be a Bloch function if $\beta_f = \displaystyle\sup_{z\in D}\;Q_f(z)$ is finite, where \begin{equation}\nonumber Q_f(z) = \sup_{u\in \C^n\setminus\{0\}}\;\frac{|(\nabla f)(z)u|}{H_z(u,\overline{u})^{1/2}},\end{equation}
$(\nabla f)(z)u = \ip{\nabla f(z),\conj{u}} = \displaystyle\sum_{k=1}^n \frac{\partial f}{\partial z_k}(z)u_k$, and $H_z$ is the Bergman metric on $D$ at $z$.  By fixing a base point $z_0 \in D$, the Bloch space $\Bloch(D)$ is a Banach space under the norm $\blochnorm{f} = \mod{f(z_0)} + \beta_f$ \cite{Timoney:80-I}.  For convenience, we assume the domain $D$ to contain the origin and take $z_0 = 0$.

In \cite{Timoney:80-I}, Timoney proved that the space $H^\infty(D)$ of bounded holomorphic functions on a bounded homogeneous domain $D$ is a subspace of $\Bloch(D)$ and for each $f \in H^\infty(D)$, $\blochnorm{f} \leq c_D\supnorm{f}$ where $c_D$ is a constant depending only on the domain $D$.  The precise value of the best bound $c_D$ has been calculated in \cite{CohenColonna:94} and \cite{Zhang:97} when $D$ is a bounded symmetric domain.

In Theorem 3.1 of \cite{AllenColonna:07}, we showed that the Bloch functions on $D$ are precisely the Lipschitz maps between the metric spaces $D$ and $\C$ under the Bergman metric and Euclidean metric, respectively.  Furthermore \begin{equation}\label{beta_f alternative}\beta_f = \sup_{z \neq w}\;\frac{\mod{f(z)-f(w)}}{\rho(z,w)},\end{equation} where $\rho$ is the Bergman distance.  In particular, for all $z, w \in D$, \begin{equation}\label{|f(z)-f(w)| < rho}\mod{f(z)-f(w)} \leq \blochnorm{f}\rho(z,w).\end{equation}

In \cite{Timoney:80-II} the little Bloch space on the unit ball was defined as
$$\lilBloch(\B_n) = \left\{f \in \Bloch(\B_n) : \lim_{\norm{z} \to 1} Q_f(z) = 0\right\},$$  which is precisely the closure of the polynomials in $\Bloch(\B_n)$.  If $D$ is a bounded symmetric domain in $\C^n$ other than $\B_n$, the set of functions $f$ for which $Q_f(z) \to 0$ as $z$ approaches the boundary $\partial D$ of $D$ consists only of the constant functions, so $\lilBloch(D)$ is defined as the closure of the polynomials in $\Bloch(D)$.  The \emph{$*$-little Bloch space} is defined as $$\Bloch_{0^*}(D) = \left\{f \in \Bloch(D) : \lim_{z \to \partial^*\hskip-2pt D}\;Q_f(z) = 0\right\},$$ where $\partial^*\hskip-2pt D$ denotes the distinguished boundary of $D$.  The unit ball is the only bounded symmetric domain $D$ for which $\partial D = \partial^*\hskip-2pt D$, so that $\lilBloch(\B_n) = \Bloch_{0^*}(\B_n)$. If $D \neq \B_n$, $\lilBloch(D)$ is a proper subspace of $\Bloch_{0^*}(D)$ and $\Bloch_{0^*}(D)$ is a non-separable subspace of $\Bloch(D)$.

%-------------------------------------------------------------------------------------------------
% Weighted Composition Operators on the Bloch Space of \D
%-------------------------------------------------------------------------------------------------
\section{Weighted Composition Operators on the Bloch Space of $\D$}\label{section:wco disk}
The first results on weighted composition operators on the Bloch space of the unit disk were obtained by Ohno and Zhao in 2001 \cite{OhnoZhao:01}.  For $\psi$ an analytic function on $\D$,  $\varphi$ an analytic self-map of $\D$, and $z \in \D$, define $s_{\psi,\varphi} = \displaystyle\sup_{z\in \D}\; s_{\psi,\varphi}(z)$ and $\tau_{\psi,\varphi} = \displaystyle\sup_{z \in \D}\; \tau_{\psi,\varphi}(z)$ where
$$\begin{aligned}
s_{\psi,\varphi}(z) &= (1-\mod{z}^2)\mod{\psi'(z)}\log\frac{2}{1-\mod{\varphi(z)}^2},\\
\tau_{\psi,\varphi}(z) &= \frac{1-\mod{z}^2}{1-\mod{\varphi(z)}^2}\mod{\varphi'(z)}\mod{\psi(z)}.
\end{aligned}$$

\begin{theorem}\label{thm:wco bounded/compact} Let $\psi$ be an analytic function on $\D$ and $\varphi$ an analytic self-map of $\D$. Then
\begin{enumerate}
\item[(a)] {\normalfont{\textbf{(}\cite{OhnoZhao:01}, \textbf{Theorems 1 and 2).}}} $W_{\psi,\varphi}$ is bounded on $\Bloch(\D)$ if and only if $s_{\psi,\varphi}$ and $\tau_{\psi,\varphi}$ are finite.  Furthermore, the bounded operator $W_{\psi,\varphi}$ is compact on $\Bloch(\D)$ if and only if $$\lim_{\mod{\varphi(z)}\to 1}\;s_{\psi,\varphi}(z) = \lim_{\mod{\varphi(z)}\to 1}\;\tau_{\psi,\varphi}(z) = 0.$$

\item[(b)] {\normalfont{\textbf{(}\cite{OhnoZhao:01}, \textbf{Theorems 3 and 4).}}} $W_{\psi,\varphi}$ is bounded on the little Bloch space $\lilBloch(\D)$ if and only if $\psi \in \lilBloch(\D)$, $s_{\psi,\varphi}$ and $\tau_{\psi,\varphi}$ are finite, and $$\lim_{\mod{z}\to 1}\;\mod{\psi(z)}\mod{\varphi'(z)}(1-\mod{z}^2) = 0.$$  Furthermore, the bounded operator $W_{\psi,\varphi}$ is compact on $\lilBloch(\D)$ if and only if $$\lim_{\mod{z}\to 1}\;s_{\psi,\varphi}(z) = \lim_{\mod{z}\to 1}\;\tau_{\psi,\varphi}(z) = 0.$$
\end{enumerate}
\end{theorem}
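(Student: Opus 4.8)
The plan is to route everything through the behavior of the seminorm $\beta$ under $W_{\psi,\varphi}$ via the product rule
$$(W_{\psi,\varphi}f)'(z) = \psi'(z)\,f(\varphi(z)) + \psi(z)\,\varphi'(z)\,f'(\varphi(z)).$$
Multiplying by $(1-\mod{z}^2)$ and taking absolute values splits the estimate into two pieces. Inserting the standard Bloch growth estimate, which dominates $\mod{f(\varphi(z))}$ by a constant multiple of $\blochnorm{f}\,\Log\frac{2}{1-\mod{\varphi(z)}^2}$, controls the first piece by $s_{\psi,\varphi}(z)\,\blochnorm{f}$; inserting the pointwise bound $\mod{f'(\varphi(z))} \leq \beta_f/(1-\mod{\varphi(z)}^2)$ controls the second by $\tau_{\psi,\varphi}(z)\,\blochnorm{f}$. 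This settles the sufficiency half of (a): finiteness of $s_{\psi,\varphi}$ and $\tau_{\psi,\varphi}$ bounds $\beta_{W_{\psi,\varphi}f}$ by a constant times $\blochnorm{f}$, and the point evaluation $(W_{\psi,\varphi}f)(0)=\psi(0)f(\varphi(0))$ is likewise dominated by $\blochnorm{f}$.

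For necessity the real work is decoupling the two quantities by a clever choice of test functions. Fixing $z_0$ and writing $a=\varphi(z_0)$, I would feed the operator the two families
$$f_a(z) = \frac{1-\mod{a}^2}{1-\conj{a}z}, \qquad g_a(z) = \Log\frac{2}{1-\conj{a}z},$$
both of which have uniformly bounded Bloch norm (one checks $\beta_{f_a}\leq 1$ and $\beta_{g_a}\leq 2$). Evaluating $(W_{\psi,\varphi}f)'(z_0)$ on each produces two linear relations in the unknowns $(1-\mod{z_0}^2)\psi'(z_0)$ and $(1-\mod{z_0}^2)\psi(z_0)\varphi'(z_0)/(1-\mod{a}^2)$. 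Since $f_a$ and $g_a$ share the same derivative value $\conj{a}/(1-\mod{a}^2)$ at $a$ but different values $f_a(a)=1$, $g_a(a)=\Log\frac{2}{1-\mod{a}^2}$, the combination $g_a-f_a$ annihilates the $\tau$-contribution and isolates the $s$-term, giving $s_{\psi,\varphi}(z_0)\leq C\norm{W_{\psi,\varphi}}$ whenever $\mod{a}$ is near $1$; for $\mod{a}$ bounded away from $1$ the logarithm is bounded and finiteness of $s_{\psi,\varphi}$ follows from $\psi=W_{\psi,\varphi}(1)\in\Bloch(\D)$. Feeding $f_a$ back in and using the established bound on $(1-\mod{z}^2)\mod{\psi'}$ isolates $\tau_{\psi,\varphi}(z_0)$; the degenerate range $\mod{a}$ near $0$ is handled by testing against the identity $z$, since $W_{\psi,\varphi}(z)=\psi\varphi\in\Bloch(\D)$ together with $\psi\in\Bloch(\D)$ controls $(1-\mod{z}^2)\mod{\psi(z)\varphi'(z)}$ there.

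For the compactness statements I would invoke the standard normal-families criterion: a bounded $W_{\psi,\varphi}$ on $\Bloch(\D)$ is compact if and only if $\blochnorm{W_{\psi,\varphi}f_k}\to 0$ for every sequence $(f_k)$ bounded in $\Bloch(\D)$ converging to $0$ uniformly on compact subsets. Sufficiency then follows by splitting $\sup_z(1-\mod{z}^2)\mod{(W_{\psi,\varphi}f_k)'(z)}$ into the region $\mod{\varphi(z)}>r$, where the hypotheses $s_{\psi,\varphi}(z),\tau_{\psi,\varphi}(z)\to 0$ make the integrand small, and the region $\mod{\varphi(z)}\leq r$, where uniform convergence of $f_k$ and $f_k'$ on the compact set $\{\mod{w}\leq r\}$ takes over. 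For necessity I would again test along a sequence $z_k$ with $\mod{\varphi(z_k)}\to 1$, using $f_{a_k}$ and a suitably normalized modification of $g_{a_k}$ with $a_k=\varphi(z_k)$; these remain bounded in $\Bloch(\D)$ yet converge to $0$ locally uniformly, so compactness forces $s_{\psi,\varphi}(z_k),\tau_{\psi,\varphi}(z_k)\to 0$.

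Part (b) layers two extra conditions onto (a) that pin the range of $W_{\psi,\varphi}$ inside $\lilBloch(\D)$: applying the operator to $1$ shows $\psi=W_{\psi,\varphi}(1)$ must lie in $\lilBloch(\D)$, and applying it to $z$ together with $\psi\in\lilBloch(\D)$ forces $(1-\mod{z}^2)\mod{\psi(z)}\mod{\varphi'(z)}\to 0$; conversely, these conditions together with finiteness of $s_{\psi,\varphi},\tau_{\psi,\varphi}$ and the density of polynomials yield $W_{\psi,\varphi}f\in\lilBloch(\D)$ for all $f\in\lilBloch(\D)$. The compactness criterion here is phrased with $\mod{z}\to 1$ rather than $\mod{\varphi(z)}\to 1$, and is proved by the same split-and-test scheme adapted to the polynomial closure. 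The hard part throughout is the necessity bookkeeping: separating $s_{\psi,\varphi}$ from $\tau_{\psi,\varphi}$ with a single uniformly bounded family, and handling the degenerate ranges $\mod{\varphi(z)}$ near $0$ and near $1$, where the naive test functions either vanish or fail to separate the two terms.
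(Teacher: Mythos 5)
The paper offers no proof of this theorem: it is quoted from Ohno and Zhao with citations, so there is no in-paper argument to compare against, and your proposal amounts to a reconstruction of the original proof. The parts you spell out are sound. The product-rule decomposition with the growth estimate $|f(w)|\le C\blochnorm{f}\log\frac{2}{1-|w|^2}$ gives sufficiency in (a); the pair $f_a$, $g_a$ genuinely decouples the two quantities, since $f_a'(a)=g_a'(a)=\conj{a}/(1-|a|^2)$ while $f_a(a)=1$ and $g_a(a)=\Log\frac{2}{1-|a|^2}$, so $g_a-f_a$ kills the $\tau$-term as you claim; and your handling of the degenerate ranges ($|a|$ away from $1$ via $\psi=W_{\psi,\varphi}1\in\Bloch(\D)$, and $|a|$ near $0$ via $\psi\varphi=W_{\psi,\varphi}(z)\in\Bloch(\D)$) is exactly what is needed. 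For the compactness necessity in (a), one simplification: once boundedness is known, $(1-|z_k|^2)|\psi'(z_k)|\le s_{\psi,\varphi}/\log\frac{2}{1-|a_k|^2}\to 0$ automatically as $|a_k|\to 1$, so $f_{a_k}$ alone isolates $\tau_{\psi,\varphi}(z_k)$, after which the normalized square $\left(\Log\frac{2}{1-\conj{a_k}z}\right)^2\big/\log\frac{2}{1-|a_k|^2}$ isolates $s_{\psi,\varphi}(z_k)$; this is precisely the device the paper itself uses in the ball case (proof of Theorem 5.3), and your normal-families criterion is the paper's Lemma 4.6.

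The one step that would fail as literally written is the compactness argument in part (b). The sequential criterion you invoke does not transfer verbatim to $\lilBloch(\D)$: given a bounded sequence in $\lilBloch(\D)$, the Montel limit $f$ is only guaranteed to be in $\Bloch(\D)$, so $f_k-f$ need not lie in $\lilBloch(\D)$, and the "if" direction of the criterion breaks. (The "only if" direction survives, so your necessity tests are fine — the test functions are holomorphic across $\overline{\D}$ and hence belong to $\lilBloch(\D)$.) For sufficiency you need instead the characterization of compact subsets of the little Bloch space: a closed bounded $K\subset\lilBloch(\D)$ is compact if and only if $\lim_{|z|\to 1}\sup_{f\in K}\;(1-|z|^2)|f'(z)|=0$. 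With the hypotheses $s_{\psi,\varphi}(z)\to 0$ and $\tau_{\psi,\varphi}(z)\to 0$ as $|z|\to 1$ (note: as $|z|\to 1$, not $|\varphi(z)|\to 1$, which is exactly why the stronger conditions appear in (b)), the product-rule estimate shows the image of the unit ball is uniformly little Bloch, and compactness follows directly, with no splitting over $\{|\varphi(z)|\le r\}$ required. Your phrase "adapted to the polynomial closure" papers over precisely this substitution, and it is the only genuine gap in an otherwise faithful and correct reconstruction.
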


In \cite{AllenColonna:08}, we established estimates on the norm of the weighted composition operator $W_{\psi,\varphi}$ on $\Bloch(\D)$ in terms of $\tau_{\psi,\varphi}$ and the quantity
$$\sigma_{\psi,\varphi} = \sup_{z \in \D}\;\frac{1}{2}(1-\mod{z}^2)\mod{\psi'(z)}\log\frac{1+\mod{\varphi(z)}}{1-\mod{\varphi(z)}},$$ which is closely related to $s_{\psi,\varphi}$ but is more amenable to a higher dimensional interpretation since the factor $\frac12\log\frac{1+\mod{\varphi(z)}}{1-\mod{\varphi(z)}}$ is precisely the Bergman distance between $0$ and $\varphi(z)$.

\begin{theorem} Let $\psi$ be analytic on $\D$ and $\varphi$ an analytic self-map of $\D$.  Then
\begin{enumerate}
\item[(a)] $W_{\psi,\varphi}$ is bounded on $\Bloch(\D)$ if and only if $\psi \in \Bloch(\D)$, and $\sigma_{\psi,\varphi}$ and $\tau_{\psi,\varphi}$ are finite.  Furthermore,
    \begin{eqnarray}
    \norm{W_{\psi,\varphi}} &\geq& \max\left\{\blochnorm{\psi}, \displaystyle\frac{1}{2}\mod{\psi(0)}\log\frac{1+\mod{\varphi(0)}}{1-\mod{\varphi(0)}}\right\}\label{inequality:norm of WCO lower}\\
    \norm{W_{\psi,\varphi}} &\leq& \max\left\{\blochnorm{\psi}, \displaystyle\frac{1}{2}\mod{\psi(0)}\log\frac{1+\mod{\varphi(0)}}{1-\mod{\varphi(0)}} + \tau_{\psi,\varphi} + \sigma_{\psi,\varphi}\right\}.\label{inequality:norm of WCO upper}
    \end{eqnarray}  Furthermore, $W_{\psi,\varphi}$ is compact if and only if  \begin{equation}\nonumber\lim_{\mod{\varphi(z)} \to 1}\sigma_{\psi,\varphi}(z) = \lim_{\mod{\varphi(z)}\to 1}\tau_{\psi,\varphi}(z) = 0.\end{equation}

\item[(b)] $W_{\psi,\varphi}$ is bounded on $\lilBloch(\D)$ if and only if $\psi \in \lilBloch(\D)$, $\sigma_{\psi,\varphi}$ and $\tau_{\psi,\varphi}$ are finite, and $$\lim_{\mod{z}\to 1}(1-\mod{z}^2)\mod{\psi(z)}\mod{\varphi'(z)} = 0.$$  Inequalities {\normalfont{(\ref{inequality:norm of WCO lower})}} and {\normalfont{(\ref{inequality:norm of WCO upper})}} hold.  Furthermore, $W_{\psi,\varphi}$ is compact if and only if $$\lim_{\mod{z} \to 1}\sigma_{\psi,\varphi}(z) = \lim_{\mod{z}\to 1}\tau_{\psi,\varphi}(z) = 0.$$
\end{enumerate}
\end{theorem}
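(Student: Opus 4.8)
The plan is to prove the norm estimates first and read off sufficiency for boundedness, then recover the remaining ``only if'' statements and the compactness criteria by comparing the new quantity $\sigma_{\psi,\varphi}$ with the Ohno--Zhao quantity $s_{\psi,\varphi}$ and invoking Theorem~\ref{thm:wco bounded/compact}. For the upper bound I would start from the product rule $(W_{\psi,\varphi}f)'(z) = \psi'(z)\,f(\varphi(z)) + \psi(z)\,\varphi'(z)\,f'(\varphi(z))$ and estimate the Bloch seminorm termwise. After multiplying by $(1-\mod{z}^2)$, the second summand is exactly $\tau_{\psi,\varphi}(z)\,(1-\mod{\varphi(z)}^2)\mod{f'(\varphi(z))}\le \tau_{\psi,\varphi}\,\beta_f$. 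For the first summand I would bound $\mod{f(\varphi(z))}$ using the Lipschitz inequality (\ref{|f(z)-f(w)| < rho}) in the form $\mod{f(w)}\le\mod{f(0)}+\beta_f\,\rho(0,w)$ together with the fact that on $\D$ the Bergman distance is $\rho(0,w)=\frac12\log\frac{1+\mod{w}}{1-\mod{w}}$, which converts $(1-\mod{z}^2)\mod{\psi'(z)}\mod{f(\varphi(z))}$ into $\mod{f(0)}(1-\mod{z}^2)\mod{\psi'(z)}+\beta_f\,\sigma_{\psi,\varphi}(z)$. Adding the value at the origin, $\mod{W_{\psi,\varphi}f(0)}=\mod{\psi(0)}\mod{f(\varphi(0))}\le\mod{\psi(0)}(\mod{f(0)}+\beta_f\,\rho(0,\varphi(0)))$, I would collect the coefficient of $\mod{f(0)}$ into $\blochnorm{\psi}$ and the coefficient of $\beta_f$ into $\frac12\mod{\psi(0)}\log\frac{1+\mod{\varphi(0)}}{1-\mod{\varphi(0)}}+\sigma_{\psi,\varphi}+\tau_{\psi,\varphi}$, and then use $a\mod{f(0)}+b\beta_f\le\max\{a,b\}\blochnorm{f}$. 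This gives (\ref{inequality:norm of WCO upper}) and shows $W_{\psi,\varphi}$ is bounded whenever $\psi\in\Bloch(\D)$ and $\sigma_{\psi,\varphi},\tau_{\psi,\varphi}$ are finite.

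For the lower bound (\ref{inequality:norm of WCO lower}) I would test on two functions of unit Bloch norm. Taking $f\equiv 1$ gives $W_{\psi,\varphi}1=\psi$, hence $\norm{W_{\psi,\varphi}}\ge\blochnorm{\psi}$. For the second term, with $a=\varphi(0)$ I would use the geodesic function $g(z)=\frac12\log\frac{1+\conj{\lambda}z}{1-\conj{\lambda}z}$, where $\lambda=a/\mod{a}$ (and $g\equiv 0$ when $a=0$); a direct check using the rotation invariance of $\beta$ shows $g(0)=0$, $\beta_g=1$, and $g(a)=\rho(0,a)$, whence $\norm{W_{\psi,\varphi}}\ge\blochnorm{W_{\psi,\varphi}g}\ge\mod{W_{\psi,\varphi}g(0)}=\frac12\mod{\psi(0)}\log\frac{1+\mod{\varphi(0)}}{1-\mod{\varphi(0)}}$.

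The remaining necessity statements and the compactness criteria I would deduce from Theorem~\ref{thm:wco bounded/compact} via a comparison of the logarithmic factors. Writing $r=\mod{\varphi(z)}$, the inequality $\frac12\log\frac{1+r}{1-r}\le\log\frac{2}{1-r^2}$ holds for all $r\in[0,1)$, so $\sigma_{\psi,\varphi}(z)\le s_{\psi,\varphi}(z)$ pointwise; conversely, splitting at $r=\tfrac12$, the ratio of the two factors is bounded on $\{r\ge\tfrac12\}$ (it tends to $2$ as $r\to 1$), giving $s_{\psi,\varphi}(z)\le C\,\sigma_{\psi,\varphi}(z)$ there, while on $\{r<\tfrac12\}$ the factor $\log\frac{2}{1-r^2}$ is bounded by $\log\frac{8}{3}$, giving $s_{\psi,\varphi}(z)\le(\log\tfrac{8}{3})\,\beta_\psi$. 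Consequently the condition $\{s_{\psi,\varphi}<\infty,\ \tau_{\psi,\varphi}<\infty\}$ is equivalent to $\{\psi\in\Bloch(\D),\ \sigma_{\psi,\varphi}<\infty,\ \tau_{\psi,\varphi}<\infty\}$, the membership $\psi\in\Bloch(\D)$ being needed precisely because the $\sigma$-factor degenerates as $r\to 0$; this matches Theorem~\ref{thm:wco bounded/compact}(a) and yields the boundedness characterization. Restricting the same factor comparison to the regime $r\to 1$ shows that $\lim_{\mod{\varphi(z)}\to 1}s_{\psi,\varphi}(z)=0$ if and only if $\lim_{\mod{\varphi(z)}\to 1}\sigma_{\psi,\varphi}(z)=0$, which gives the compactness criterion.

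For part (b) the arguments run in parallel on $\lilBloch(\D)$: necessity of $\psi\in\lilBloch(\D)$ follows from $W_{\psi,\varphi}1=\psi$ with $1\in\lilBloch(\D)$, the extra condition $\lim_{\mod{z}\to 1}(1-\mod{z}^2)\mod{\psi(z)}\mod{\varphi'(z)}=0$ records that $W_{\psi,\varphi}(\mathrm{id})=\psi\varphi$ must lie in $\lilBloch(\D)$, and the same comparison of $\sigma_{\psi,\varphi}$ with $s_{\psi,\varphi}$ (now in the $\mod{z}\to 1$ limits) transfers Theorem~\ref{thm:wco bounded/compact}(b); the estimates (\ref{inequality:norm of WCO lower}) and (\ref{inequality:norm of WCO upper}) carry over verbatim because the test functions $1$ and $g$ both lie in $\lilBloch(\D)$. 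I expect the necessity directions, rather than the estimates, to be the main obstacle: the delicate point is to verify that the degeneracy of the $\sigma$-factor as $\varphi(z)\to 0$ is exactly compensated by the separately imposed hypothesis $\psi\in\Bloch(\D)$ (resp. $\psi\in\lilBloch(\D)$), so that the reformulated conditions are genuinely equivalent to those of Ohno and Zhao and no boundedness or compactness information is lost in the translation.
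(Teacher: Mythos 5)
Your route is essentially the paper's own: the norm estimates come from the same product-rule/Lipschitz computation (this is precisely how the general bounded-homogeneous-domain estimate, Theorem~\ref{boundest}, is proved, and what the paper cites from \cite{AllenColonna:08}), and the characterizations are transferred from Theorem~\ref{thm:wco bounded/compact} via exactly the comparison the paper uses: $\sigma_{\psi,\varphi}(z)\le s_{\psi,\varphi}(z)$ pointwise, $s_{\psi,\varphi}(z)\le 2\sigma_{\psi,\varphi}(z)$ when $\mod{\varphi(z)}\ge\frac12$, and the region $\mod{\varphi(z)}<\frac12$ absorbed by $\beta_\psi$ (respectively, by $\psi\in\lilBloch(\D)$ when the limits are taken as $\mod{z}\to 1$ in part (b)). All of those comparisons check out, and your flagged ``delicate point'' about the degeneracy of the logarithmic factor as $\varphi(z)\to 0$ is handled correctly by the hypothesis $\psi\in\Bloch(\D)$ (resp.\ $\psi\in\lilBloch(\D)$).

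There is, however, one concrete error in part (b): the function $g(z)=\frac12\log\frac{1+\conj{\lambda}z}{1-\conj{\lambda}z}$ does \emph{not} lie in $\lilBloch(\D)$, so your assertion that the two test functions ``both lie in $\lilBloch(\D)$'' fails. Indeed $g'(z)=\conj{\lambda}/(1-\conj{\lambda}^2z^2)$, so $(1-\mod{z}^2)\mod{g'(z)}=1$ at every point $z=t\lambda$, $0\le t<1$; the quantity does not vanish as $\mod{z}\to 1$, and $g$ is the standard example of a Bloch function outside the little Bloch space. Consequently the lower estimate on $\lilBloch(\D)$ cannot be obtained by testing on $g$ itself. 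The repair is short: use the dilates $g_s(z)=\frac12\log\frac{1+s\conj{\lambda}z}{1-s\conj{\lambda}z}$ for $0<s<1$, which are holomorphic on a neighborhood of $\conj{\D}$ and hence in $\lilBloch(\D)$, and satisfy $g_s(0)=0$, $\beta_{g_s}\le 1$, and $g_s(\varphi(0))\to\rho(0,\varphi(0))$ as $s\to 1$; letting $s\to 1$ recovers the second term of the lower bound. This is exactly why the paper's general little-Bloch estimate (Theorem~\ref{theorem:little bloch bounded}) is stated with $\omega_0(\varphi(0))$ in place of $\omega(\varphi(0))$, and why it yields the same constant on the disk: $\omega_0=\omega=\rho(\cdot\,,0)$ there, by Remark~\ref{remark_section_4}. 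With that one fix, your argument is complete and matches the paper's.
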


\begin{proof} Assume $W_{\psi,\varphi}$ is bounded.  Using as a test function the constant 1, we obtain $\psi \in \Bloch(\D)$.  Since $\sigma_{\psi,\varphi} \leq s_{\psi,\varphi}$, by Theorem \ref{thm:wco bounded/compact}, it follows that $\sigma_{\psi,\varphi}$  and $\tau_{\psi,\varphi}$ are finite. The estimates (\ref{inequality:norm of WCO lower}) and (\ref{inequality:norm of WCO upper}) follow from Theorems 2.1 and 2.2 of \cite{AllenColonna:08}. Conversely, assume $\psi \in \Bloch(\D)$, and $\sigma_{\psi,\varphi}$ and $\tau_{\psi,\varphi}$ are finite.  By the calculation carried out in \cite{AllenColonna:08}, $W_{\psi,\varphi}$ maps $\Bloch(\D)$ into itself and estimates (\ref{inequality:norm of WCO lower}) and (\ref{inequality:norm of WCO upper}) hold.  Thus $W_{\psi,\varphi}$ is bounded.  Observing that for each $z \in \D$, $\sigma_{\psi,\varphi(z)} \leq s_{\psi,\varphi}(z)$ and for $\mod{\varphi(z)} \geq \frac{1}{2}$, $s_{\psi,\varphi}(z) \leq 2\sigma_{\psi,\varphi}(z)$, the characterization of the compactness follows at once from Theorem \ref{thm:wco bounded/compact}.  The proof of part (b) is analogous.\end{proof}

\noindent The above estimates agree with the norm estimates for the composition operators on $\Bloch(\D)$ in \cite{Xiong:04} when $\psi$ is taken to be the constant function 1.

%-------------------------------------------------------------------------------------------------
% Weighted Composition Operators on the Bloch Space of a Bounded Homogeneous Domain
%-------------------------------------------------------------------------------------------------
\section{Weighted Composition Operators on the Bloch Space of a Bounded Homogeneous Domain}\label{section:wco bhd}
Let $D$ be a bounded homogeneous domain in $\C^n$.  For $z \in D$, define $$\begin{aligned}\omega(z) &= \sup\left\{\mod{f(z)} : f \in \Bloch(D), f(0) = 0 \text{ and } \blochnorm{f} \leq 1\right\},\\
\omega_0(z) &= \sup\left\{\mod{f(z)} : f \in \Bloch_{0^*}(D), f(0) = 0, \text{ and } \blochnorm{f} \leq 1\right\}.\end{aligned}$$

\begin{lemma} Let $D$ be a bounded homogeneous domain in $\C^n$.  For each $z \in D$, $\omega(z)$ and $\omega_0(z)$ are finite.  In fact, $\omega_0(z) \leq \omega(z) \leq \rho(z,0)$.\end{lemma}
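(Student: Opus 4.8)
The plan is to derive the entire statement from the single Lipschitz estimate recorded above, namely that every $f \in \Bloch(D)$ satisfies $\mod{f(z)-f(w)} \leq \blochnorm{f}\,\rho(z,w)$ for all $z,w \in D$, together with the elementary observation that $\Bloch_{0^*}(D)$ is a subspace of $\Bloch(D)$. No higher-dimensional machinery is needed beyond these two ingredients; the role of the Bergman distance is precisely to convert the normalization $\blochnorm{f} \leq 1$ into a pointwise bound.

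First I would establish the inequality $\omega(z) \leq \rho(z,0)$. Fix $z \in D$ and let $f \in \Bloch(D)$ be any competitor in the definition of $\omega(z)$, so that $f(0) = 0$ and $\blochnorm{f} \leq 1$. Applying the Lipschitz estimate with $w = 0$ gives
$$\mod{f(z)} = \mod{f(z) - f(0)} \leq \blochnorm{f}\,\rho(z,0) \leq \rho(z,0).$$
Since this bound is uniform over all admissible $f$, taking the supremum over such $f$ yields $\omega(z) \leq \rho(z,0)$, as desired.

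Next I would obtain $\omega_0(z) \leq \omega(z)$ purely by comparing admissible sets. Because $\Bloch_{0^*}(D) \subseteq \Bloch(D)$, every $f$ competing in the definition of $\omega_0(z)$ (that is, $f \in \Bloch_{0^*}(D)$ with $f(0)=0$ and $\blochnorm{f}\leq 1$) is also a competitor in the definition of $\omega(z)$. Hence the supremum defining $\omega_0(z)$ is taken over a subset of the functions defining $\omega(z)$, so $\omega_0(z) \leq \omega(z)$. Finally, finiteness is immediate: $D$ is a bounded domain, so the Bergman distance $\rho(z,0)$ between the two points $z$ and $0$ of $D$ is finite, and the chain $\omega_0(z) \leq \omega(z) \leq \rho(z,0) < \infty$ shows both quantities are finite. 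The suprema are over nonempty sets, since $f \equiv 0$ is admissible in each.

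I do not anticipate a genuine obstacle here: the content of the lemma is essentially a repackaging of the Lipschitz property. The only point requiring a word of justification is the finiteness of $\rho(z,0)$, which rests on $\rho$ being a bona fide metric on the bounded domain $D$, so that any two interior points lie at finite Bergman distance. Everything else is a direct application of the estimate together with the monotonicity of the supremum under set inclusion.
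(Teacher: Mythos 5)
Your proof is correct and follows essentially the same route as the paper: both derive $\omega(z) \leq \rho(z,0)$ from the Lipschitz estimate (equivalently, from the identity $\beta_f = \sup_{z\neq w}\mod{f(z)-f(w)}/\rho(z,w)$, of which your inequality is an immediate consequence) and obtain $\omega_0(z) \leq \omega(z)$ by inclusion of the admissible sets. Your additional remarks on the finiteness of $\rho(z,0)$ and the nonemptiness of the admissible sets are fine, if slightly more detail than the paper records.
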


\begin{proof} Let $z \in D$.  The inequality $\omega_0(z) \leq \omega(z)$ is obvious.  For $f \in \Bloch(D)$, by (\ref{beta_f alternative}), $\mod{f(z)-f(0)} \leq \rho(z,0)\beta_f.$  By taking the supremum over all $f \in \Bloch(D)$ such that $f(0) = 0$ and $\blochnorm{f} \leq 1$, we have $\omega(z) \leq \rho(z,0)$.\end{proof}

\begin{remark}\label{remark_section_4} By Theorems 3.9 and 3.14 in \cite{Zhu:04}, it follows immediately that for all $z \in \B_n$ $\omega_0(z) = \omega(z) = \rho(z,0)$ where $$\rho(z,0) = \frac{1}{2}\log\frac{1+\norm{z}}{1-\norm{z}}.$$  It is unknown whether there are other domains for which either equality holds.  The following lemma shows the relationship between point evaluation of Bloch functions (respectively, little Bloch functions) and $\omega$ (respectively, $\omega_0$).\end{remark}

\begin{lemma}\label{inequality:f(z) bound bloch} Let $D$ be a bounded homogeneous domain in $\C^n$ and let $f \in \Bloch(D)$ $(\text{respectively}, f \in \Bloch_{0^*}(D))$.  Then for all $z \in \D$, we have $$\mod{f(z)} \leq \mod{f(0)} + \omega(z)\beta_f,$$ $(\text{respectively}, \mod{f(z)} \leq \mod{f(0)} + \omega_0(z)\beta_f)$.\end{lemma}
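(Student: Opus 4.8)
The plan is to reduce each statement to the defining supremum for $\omega$ (resp.\ $\omega_0$) by first subtracting a constant and then normalizing. Fix $f \in \Bloch(D)$ and set $g = f - f(0)$. Since the gradient is unaffected by subtracting a constant, $Q_g(z) = Q_f(z)$ for every $z \in D$, so $\beta_g = \beta_f$ while $g(0) = 0$. Consequently $\blochnorm{g} = \mod{g(0)} + \beta_g = \beta_f$, which is exactly what makes $g$ (after rescaling) admissible in the supremum defining $\omega(z)$.

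First I would dispose of the degenerate case. If $\beta_f = 0$, then $Q_f \equiv 0$, so by the Lipschitz characterization (\ref{beta_f alternative}) the function $f$ is constant; hence $\mod{f(z)} = \mod{f(0)}$ and the asserted inequality holds trivially. Assuming $\beta_f > 0$, I would put $h = g/\beta_f$, so that $h(0) = 0$ and $\blochnorm{h} = \beta_g/\beta_f = 1$. Thus $h$ competes in the supremum defining $\omega(z)$, giving $\mod{h(z)} \leq \omega(z)$. Multiplying through by $\beta_f$ yields $\mod{f(z) - f(0)} = \beta_f\,\mod{h(z)} \leq \omega(z)\beta_f$, and the triangle inequality then produces $\mod{f(z)} \leq \mod{f(0)} + \omega(z)\beta_f$.

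For the $*$-little Bloch statement the only extra point to verify is that the auxiliary functions remain in $\Bloch_{0^*}(D)$. But since $Q_g = Q_f$ and $Q_h = Q_g/\beta_f$, the boundary condition $\lim_{z \to \partial^* D} Q_f(z) = 0$ is inherited by both $g$ and $h$; hence $h$ is admissible in the supremum defining $\omega_0(z)$, and the identical computation gives $\mod{f(z)} \leq \mod{f(0)} + \omega_0(z)\beta_f$.

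I do not anticipate a genuine obstacle here: the argument is a routine normalization, and the only things requiring care are the degenerate case $\beta_f = 0$ and the verification that subtracting a constant and rescaling preserve membership in the relevant space. Both follow at once from the fact that $Q_f$ depends only on $\nabla f$, so these two reductions are the heart of an otherwise direct proof.
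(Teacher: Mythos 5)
Your proof is correct and follows essentially the same route as the paper: normalize $g = (f - f(0))/\beta_f$, observe $g(0)=0$ and $\blochnorm{g}=1$ so $g$ is admissible in the supremum defining $\omega(z)$ (resp.\ $\omega_0(z)$), then apply the triangle inequality. Your explicit handling of the case $\beta_f = 0$ and the verification that the boundary condition $Q_f(z)\to 0$ passes to the normalized function are just slightly more detailed versions of steps the paper treats as immediate.
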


\begin{proof} Let $f \in \Bloch(D)$.  The result is immediate if $f$ is constant.  For $f$ non-constant and $z\in D$, the function defined by $$g(z) = \frac{1}{\beta_f}(f(z) - f(0))$$ is Bloch and satisfies the conditions $g(0) = 0$ and $Q_g(z) = \frac{1}{\beta_f}Q_f(z)$ for all $z \in D$.  Thus, $\blochnorm{g} = 1$, so $\mod{g(z)} \leq \omega(z)$ for all $z \in D$.  Consequently,
$$\mod{f(z)} \leq \mod{f(0)} + \mod{f(z)-f(0)} = \mod{f(0)} + \mod{g(z)}\beta_f \leq \mod{f(0)} + \omega(z)\beta_f.$$  The proof for the case $f \in \Bloch_{0^*}(D)$ is analogous.
\end{proof}

For $z \in D$, denote by $J\varphi(z)$ the Jacobian matrix of $\varphi$ at $z$ (i.e. the matrix whose $(j,k)$-entry is $\frac{\partial\varphi_j}{\partial z_k}(z)$). Define the \emph{Bergman constant} of $\varphi$ by $B_\varphi = \sup_{z \in D}\; B_\varphi(z)$, where for $z\in D$ $$B_\varphi(z) = \sup_{u \in \C^n\setminus\{0\}}\;\frac{H_{\varphi(z)}(J\varphi(z)u,\conj{J\varphi(z)u})^{1/2}}{H_z(u,\conj{u})^{1/2}}.$$  In \cite{AllenColonna:07}, the Bergman constant was used for the study of composition operators on the Bloch space.  Specifically, for $f \in \Bloch(D)$, \begin{equation}\label{inequality:B_varphi}Q_{f\circ\varphi}(z) \leq B_\varphi(z)Q_f(\varphi(z))\end{equation} for all $z \in D$. Letting
$$\begin{aligned}
T_{0,\varphi}(z) &= \sup\{Q_{f\circ\varphi}(z):\, f \in \Bloch_{0^*}(D), \beta_f \leq 1\},\\
T_\varphi(z) &= \sup\{Q_{f\circ\varphi}(z):\, f\in\Bloch(D), \beta_f\leq 1\},
\end{aligned}$$ from (\ref{inequality:B_varphi}), it follows that
\begin{equation}\label{T-B inequality}T_{0,\varphi}(z) \leq T_\varphi(z)\leq B_\varphi(z)\end{equation} for each $z\in D$. Moreover, for each $f\in\Bloch(D)$ (respectively, $\Bloch_{0^*}(D)$) and $z\in D$,
\begin{eqnarray} Q_{f\circ\varphi}(z) &\leq& T_\varphi(z)\beta_f\label{testimate}\\
(\text{respectively, } Q_{f\circ\varphi}(z) &\leq& T_{0,\varphi}(z)\beta_f).\notag\end{eqnarray}

For $z \in \D$, by Remark \ref{remark_section_4}, we have
\begin{eqnarray} \omega(\varphi(z)) &=& \frac{1}{2}\log\frac{1+\mod{\varphi(z)}}{1-\mod{\varphi(z)}}\ \hbox{ and } \nonumber\\T_{0,\varphi}(z) &=& T_\varphi(z)= \frac{(1-\mod{z}^2)\mod{\varphi'(z)}}{1-\mod{\varphi(z)}^2}=B_\varphi(z),\nonumber\end{eqnarray} since the right-hand side of the above formula equals $(1-|z|^2)|(f\circ \varphi)'(z)|$ for $$f(w)=\frac{\varphi(z)-w}{1-\overline{\varphi(z)}w},\  w\in\D,$$ which is in the little Bloch space.

For a bounded homogeneous domain $D$ in $\C^n$, $\psi$ holomorphic on $D$, and $\varphi$ holomorphic self-map of $D$, we define $$\begin{aligned}\sigma_{\psi,\varphi} &= \sup_{z \in D}\;\omega(\varphi(z))Q_\psi(z)\\ \tau_{\psi,\varphi} &= \sup_{z \in D}\;\mod{\psi(z)}T_\varphi(z)\\
\sigma_{0,\psi,\varphi} &= \sup_{z \in D}\;\omega_0(\varphi(z))Q_\psi(z)\\
\tau_{0,\psi,\varphi} &= \sup_{z \in D}\;\mod{\psi(z)}T_{0,\varphi(z)}.\end{aligned}$$  In the case of the unit disk, $\sigma_{\psi,\varphi} = \sigma_{0,\psi,\varphi}$, $\tau_{\psi,\varphi} = \tau_{0,\psi,\varphi}$, and these quantities agree with the expressions in the previous section.

\begin{theorem}\label{boundest} Let $D$ be a bounded homogeneous domain in $\C^n$ and $\varphi$ a holomorphic self-map of $D$.  If $\psi \in \Bloch(D)$, and $\sigma_{\psi,\varphi}$ and $\tau_{\psi,\varphi}$ are finite, then $W_{\psi,\varphi}$ is bounded on $\Bloch(D)$ and  {\small{$$\max\{\blochnorm{\psi}, \mod{\psi
(0)}\omega(\varphi(0))\} \leq \norm{W_{\psi,\varphi}} \leq \max\{\blochnorm{\psi}, \mod{\psi(0)}\omega(\varphi(0)) + \tau_{\psi,\varphi} + \sigma_{\psi,\varphi}\}.$$}}\end{theorem}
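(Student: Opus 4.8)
The plan is to prove the two norm estimates directly; boundedness then comes for free, since the upper estimate exhibits $\norm{W_{\psi,\varphi}}$ as a finite quantity. The crux of the whole argument is a product-rule bound on the Bloch seminorm of $W_{\psi,\varphi}f = \psi\cdot(f\circ\varphi)$, after which everything reduces to the inequalities already available, namely Lemma~\ref{inequality:f(z) bound bloch} and inequality~(\ref{testimate}), together with the subadditivity of the supremum.

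\emph{Upper bound.} First I would fix $f \in \Bloch(D)$ and set $g = f\circ\varphi$. Since the directional derivative $(\nabla h)(z)u$ is linear in the partials, the product rule gives $(\nabla(\psi g))(z)u = g(z)(\nabla\psi)(z)u + \psi(z)(\nabla g)(z)u$, and dividing by $H_z(u,\conj{u})^{1/2}$ and taking the supremum over $u$ yields the pointwise splitting
$$Q_{W_{\psi,\varphi}f}(z) \leq \mod{g(z)}Q_\psi(z) + \mod{\psi(z)}Q_{f\circ\varphi}(z).$$
Next I would bound each factor using the tools in place: Lemma~\ref{inequality:f(z) bound bloch} gives $\mod{g(z)} = \mod{f(\varphi(z))} \leq \mod{f(0)} + \omega(\varphi(z))\beta_f$, while inequality~(\ref{testimate}) gives $Q_{f\circ\varphi}(z) \leq T_\varphi(z)\beta_f$. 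Substituting and taking $\sup_{z\in D}$ collapses the three resulting suprema into $\beta_\psi$, $\sigma_{\psi,\varphi}$, and $\tau_{\psi,\varphi}$, producing $\beta_{W_{\psi,\varphi}f} \leq \mod{f(0)}\beta_\psi + \beta_f\sigma_{\psi,\varphi} + \beta_f\tau_{\psi,\varphi}$. For the point-evaluation part I would use $(W_{\psi,\varphi}f)(0) = \psi(0)f(\varphi(0))$ and Lemma~\ref{inequality:f(z) bound bloch} once more to get $\mod{(W_{\psi,\varphi}f)(0)} \leq \mod{\psi(0)}(\mod{f(0)} + \omega(\varphi(0))\beta_f)$. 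Adding the two contributions and regrouping the terms by $\mod{f(0)}$ and $\beta_f$ gives
$$\blochnorm{W_{\psi,\varphi}f} \leq \mod{f(0)}\blochnorm{\psi} + \beta_f\left(\mod{\psi(0)}\omega(\varphi(0)) + \sigma_{\psi,\varphi} + \tau_{\psi,\varphi}\right),$$
where I have used $\mod{\psi(0)} + \beta_\psi = \blochnorm{\psi}$ in the $\mod{f(0)}$-coefficient. The final packaging is the elementary inequality $a\mod{f(0)} + b\beta_f \leq \max\{a,b\}\blochnorm{f}$ for $a,b\geq 0$, which simultaneously yields the claimed maximum and shows $W_{\psi,\varphi}f \in \Bloch(D)$.

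\emph{Lower bound.} To get $\norm{W_{\psi,\varphi}} \geq \blochnorm{\psi}$ I would test on the constant function $1$, for which $W_{\psi,\varphi}1 = \psi$ and $\blochnorm{1} = 1$. For the other term I would read off the definition of $\omega(\varphi(0))$: for any $f$ with $f(0) = 0$ and $\blochnorm{f}\leq 1$, discarding the nonnegative seminorm part gives $\blochnorm{W_{\psi,\varphi}f} \geq \mod{(W_{\psi,\varphi}f)(0)} = \mod{\psi(0)}\mod{f(\varphi(0))}$, and taking the supremum over all such $f$ recovers $\mod{\psi(0)}\omega(\varphi(0))$.

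The only point I expect to require care is the multivariable bookkeeping in the upper bound: verifying that the product rule for the pointwise quantity $Q$ splits cleanly as above, which hinges on $\mod{g(z)}$ and $\mod{\psi(z)}$ being constants in the supremum over $u$, and then tracking the constant-versus-seminorm contributions so that $\beta_\psi$ and $\mod{\psi(0)}$ recombine into $\blochnorm{\psi}$. Once that is handled, the remainder is routine.
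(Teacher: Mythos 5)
Your proposal is correct and takes essentially the same route as the paper's proof: the identical product-rule splitting of $Q_{\psi(f\circ\varphi)}(z)$, the same appeals to Lemma~\ref{inequality:f(z) bound bloch} and inequality~(\ref{testimate}), and the same test functions (the constant $1$ and the defining family for $\omega(\varphi(0))$) for the lower bound. The only cosmetic difference is the final packaging: you invoke $a\mod{f(0)}+b\beta_f\leq\max\{a,b\}\blochnorm{f}$ directly, whereas the paper reaches the same maximum through a two-case comparison of $\mod{\psi(0)}\omega(\varphi(0))+\tau_{\psi,\varphi}+\sigma_{\psi,\varphi}$ with $\blochnorm{\psi}$.
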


\begin{proof} We begin by proving the upper estimate.  Let $f \in \Bloch(D)$.  Then for $z \in D$, by the product rule we have $$\nabla(\psi(f\circ\varphi))(z) = \psi(z) \nabla(f\circ\varphi)(z) + f(\varphi(z))\nabla(\psi)(z),$$ so for all $u \in \C^n\setminus\{0\}$, $$\mod{\nabla(\psi(f\circ \varphi))(z)u} \leq \mod{\psi(z)}\mod{\nabla(f)(\varphi(z))J\varphi(z)u} + \mod{f(\varphi(z))}\mod{\nabla(\psi)(z)u}.$$  By (\ref{testimate}) and Lemma \ref{inequality:f(z) bound bloch}, we obtain $$\sup_{z \in D} Q_{\psi(f\circ\varphi)}(z) \leq \tau_{\psi,\varphi}\beta_f + \mod{f(0)}\beta_\psi + \sup_{z \in D}\omega(\varphi(z))Q_\psi(z)\beta_f,$$ which is finite.  So $W_{\psi,\varphi} f \in \Bloch(D)$ and again by Lemma \ref{inequality:f(z) bound bloch}
$$\begin{aligned}
\blochnorm{W_{\psi,\varphi} f} &\leq \mod{\psi(0)}\mod{f(\varphi(0))} + \mod{f(0)}\beta_\psi + (\tau_{\psi,\varphi} + \sigma_{\psi,\varphi})\beta_f\\
&\leq \mod{\psi(0)}(\mod{f(0)} + \omega(\varphi(0))\beta_f) + \mod{f(0)}\beta_\psi + (\tau_{\psi,\varphi} + \sigma_{\psi,\varphi})\beta_f\\
&= \blochnorm{\psi}\blochnorm{f} + (|\psi(0)|\omega(\varphi(0)) + \tau_{\psi,\varphi} + \sigma_{\psi,\varphi} - \blochnorm{\psi})\beta_f.
\end{aligned}$$  If $\mod{\psi(0)}\omega(\varphi(0)) + \tau_{\psi,\varphi} + \sigma_{\psi,\varphi} \leq \blochnorm{\psi}$, then $\blochnorm{W_{\psi,\varphi}f} \leq \blochnorm{\psi}\blochnorm{f}$.  Otherwise, $\blochnorm{W_{\psi,\varphi} f} \leq (\mod{\psi(0)}\omega(\varphi(0)) + \tau_{\psi,\varphi} + \sigma_{\psi,\varphi})\blochnorm{f}.$  Thus, $W_{\psi,\varphi}$ is bounded and $$\blochnorm{W_{\psi,\varphi}} \leq \max\{\blochnorm{\psi}, \mod{\psi(0)}\omega(\varphi(0)) + \tau_{\psi,\varphi} + \sigma_{\psi,\varphi}\}.$$

To prove the lower estimate, observe that by considering as test function the constant function 1, we have $\blochnorm{W_{\psi,\varphi} 1} = \blochnorm{\psi}$, so that $\norm{W_{\psi,\varphi}} \geq \blochnorm{\psi}$.  Furthermore
$$\begin{aligned}
\norm{W_{\psi,\varphi}} &= \sup\{\blochnorm{W_{\psi,\varphi}f} : f \in \Bloch(D) \text{ and } \blochnorm{f} \leq 1\}\\
&\geq \sup\{\blochnorm{W_{\psi,\varphi} f} : f \in \Bloch(D), f(0) = 0, \text{ and } \blochnorm{f} \leq 1\}\\
&\geq \sup\{\mod{\psi(0)}\mod{f(\varphi(0))} : f \in \Bloch(D), f(0) = 0, \text{ and } \blochnorm{f} \leq 1\}\\
&= \mod{\psi(0)}\omega(\varphi(0)).
\end{aligned}$$  Thus $\norm{W_{\psi,\varphi}} \geq \max\{\blochnorm{\psi}, \mod{\psi(0)}\omega(\varphi(0))\}.$
\end{proof}

\begin{theorem}\label{theorem:little bloch bounded} Let $D$ be a bounded homogeneous domain in $\C^n$.  If $\psi \in \Bloch_{0^*}(D)$, $\sigma_{\psi,\varphi}$ and $\tau_{\psi,\varphi}$ are finite, and $$\lim_{z \to \partial^*\hskip-2pt D} \mod{\psi(z)}T_{0,\varphi}(z) = \lim_{z \to \partial^*\hskip-2pt D} \omega_0(\varphi(z))Q_\psi(z) = 0,$$ then $W_{\psi,\varphi}$ is bounded on $\Bloch_{0^*}(D)$ and {\small{$$\max\{\blochnorm{\psi}, \mod{\psi
(0)}\omega_0(\varphi(0))\} \leq \norm{W_{\psi,\varphi}} \leq \max\{\blochnorm{\psi}, \mod{\psi(0)}\omega_0(\varphi(0)) + \tau_{\psi,\varphi} + \sigma_{0,\psi,\varphi}\}.$$}}\end{theorem}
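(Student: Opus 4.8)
The plan is to follow the proof of Theorem \ref{boundest} almost verbatim for the two norm estimates, the only genuinely new ingredient being the verification that $W_{\psi,\varphi}$ maps $\Bloch_{0^*}(D)$ \emph{into itself} rather than merely into $\Bloch(D)$. The two limit hypotheses are exactly what is needed for this, so the boundedness claim splits into (i) a boundary-vanishing argument and (ii) the norm bounds, which are then routine adaptations.

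First I would show that $W_{\psi,\varphi}f \in \Bloch_{0^*}(D)$ for every $f \in \Bloch_{0^*}(D)$. Starting from the product rule $\nabla(\psi(f\circ\varphi))(z) = \psi(z)\nabla(f\circ\varphi)(z) + f(\varphi(z))\nabla\psi(z)$, applying it to an arbitrary direction $u$ and taking the supremum over $u$ gives the pointwise bound
$$Q_{\psi(f\circ\varphi)}(z) \leq |\psi(z)|\,Q_{f\circ\varphi}(z) + |f(\varphi(z))|\,Q_\psi(z).$$
For the first term I would invoke the $\Bloch_{0^*}(D)$ form of (\ref{testimate}), namely $Q_{f\circ\varphi}(z)\le T_{0,\varphi}(z)\beta_f$, so that this term is dominated by $|\psi(z)|T_{0,\varphi}(z)\beta_f$, which tends to $0$ as $z\to\partial^* D$ by the first hypothesis. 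For the second term I would apply Lemma \ref{inequality:f(z) bound bloch} in its $\Bloch_{0^*}(D)$ form, $|f(\varphi(z))|\le |f(0)| + \omega_0(\varphi(z))\beta_f$, splitting it into $|f(0)|Q_\psi(z)$ and $\omega_0(\varphi(z))Q_\psi(z)\beta_f$: the former vanishes in the limit because $\psi\in\Bloch_{0^*}(D)$ forces $Q_\psi(z)\to 0$, and the latter vanishes by the second hypothesis. Hence $Q_{W_{\psi,\varphi}f}(z)\to 0$ as $z\to\partial^* D$, so $W_{\psi,\varphi}f\in\Bloch_{0^*}(D)$.

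Next I would read off the upper norm estimate from the same inequalities, now taking suprema instead of limits. Using the $\Bloch(D)$ form $Q_{f\circ\varphi}\le T_\varphi\beta_f$ of (\ref{testimate}) for the $\psi$-term and Lemma \ref{inequality:f(z) bound bloch} for the other, one obtains $\beta_{W_{\psi,\varphi}f}\le \tau_{\psi,\varphi}\beta_f + |f(0)|\beta_\psi + \sigma_{0,\psi,\varphi}\beta_f$, while $|W_{\psi,\varphi}f(0)| = |\psi(0)|\,|f(\varphi(0))| \le |\psi(0)|\big(|f(0)| + \omega_0(\varphi(0))\beta_f\big)$. Adding these and collecting the coefficients of $|f(0)|$ and $\beta_f$ yields
$$\blochnorm{W_{\psi,\varphi}f} \le \blochnorm{\psi}\,|f(0)| + \big(|\psi(0)|\omega_0(\varphi(0)) + \tau_{\psi,\varphi} + \sigma_{0,\psi,\varphi}\big)\beta_f,$$
and the elementary max-argument from the proof of Theorem \ref{boundest} (splitting on whether the second coefficient exceeds $\blochnorm{\psi}$, and using $|f(0)|+\beta_f=\blochnorm{f}$) gives the stated upper bound for $\norm{W_{\psi,\varphi}}$.

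For the lower estimate I would proceed exactly as in Theorem \ref{boundest}, checking only that the test functions lie in $\Bloch_{0^*}(D)$: the constant $1$ is in $\Bloch_{0^*}(D)$ and satisfies $W_{\psi,\varphi}1=\psi$, whence $\norm{W_{\psi,\varphi}}\ge\blochnorm{\psi}$; and for $f\in\Bloch_{0^*}(D)$ with $f(0)=0$ and $\blochnorm{f}\le 1$, the inequality $\blochnorm{W_{\psi,\varphi}f}\ge |W_{\psi,\varphi}f(0)| = |\psi(0)|\,|f(\varphi(0))|$ together with the definition of $\omega_0$ gives $\norm{W_{\psi,\varphi}}\ge|\psi(0)|\omega_0(\varphi(0))$. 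I expect the first step to be the main obstacle: unlike in the $\Bloch(D)$ case, membership in the correct space is not automatic from the finiteness of $\sigma_{\psi,\varphi}$ and $\tau_{\psi,\varphi}$, and the delicate point is to decompose $Q_{W_{\psi,\varphi}f}$ so that each summand is controlled by exactly one of the two limit hypotheses (or by $Q_\psi\to 0$), taking care that $T_{0,\varphi}$ rather than $T_\varphi$ appears so as to match the stated boundary condition.
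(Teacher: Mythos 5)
Your proposal is correct and follows essentially the same route as the paper: the same product-rule decomposition $Q_{\psi(f\circ\varphi)}(z)\leq \mod{\psi(z)}T_{0,\varphi}(z)\beta_f+\mod{f(0)}Q_\psi(z)+\omega_0(\varphi(z))Q_\psi(z)\beta_f$, with each summand killed by one of the two limit hypotheses or by $Q_\psi\to 0$, establishes that $W_{\psi,\varphi}$ maps $\Bloch_{0^*}(D)$ into itself, and the norm estimates are then obtained exactly as in Theorem \ref{boundest} (the paper simply cites that proof rather than re-deriving them, and normalizes $\blochnorm{f}\leq 1$ where you carry $\beta_f$ explicitly). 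Your closing observation about matching $T_{0,\varphi}$ and $\omega_0$ to the stated boundary conditions is precisely the point the paper's proof exploits.
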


\begin{proof} Arguing as in the proof of Theorem \ref{boundest}, it suffices to show that if $\psi \in \Bloch_{0^*}(D)$, $\sigma_{\psi,\varphi}$ and $\tau_{\psi,\varphi}$ are finite, and $$\lim_{z \to \partial^*\hskip-2pt D} \mod{\psi(z)}T_{0,\varphi}(z) = \lim_{z \to \partial^*\hskip-2pt D} \omega_0(\varphi(z))Q_\psi(z) = 0,$$ then $W_{\psi,\varphi}$ maps the $*$-little Bloch space into itself.  Let $f \in \Bloch_{0^*}(D)$.  Without loss of generality, we may assume $\blochnorm{f} \leq 1$.  For $z \in D$, by Lemma \ref{inequality:f(z) bound bloch}, we have
$$\begin{aligned}
Q_{\psi(f\circ\varphi)}(z) &\leq \mod{\psi(z)}Q_{f\circ\varphi}(z) + \mod{f(\varphi(z))}Q_\psi(z)\\
&\leq \mod{\psi(z)}T_{0,\varphi}(z) + \mod{f(0)}Q_\psi(z) + \omega_0(\varphi(z))Q_\psi(z),
\end{aligned}$$ which approaches 0 as $z \to \partial^*\hskip-2pt D$.  Thus $\psi(f\circ\varphi) \in \Bloch_{0^*}(D)$.
\end{proof}

\begin{theorem}\label{theorem:tau sigma finite} Let $D$ be a bounded homogeneous domain in $\C^n$, $\psi$ a holomorphic function on $D$, and $\varphi$ a holomorphic self-map of $D$.  If $W_{\psi,\varphi}$ is bounded on the Bloch space of $D$, then $\psi \in \Bloch(D)$ and $\sigma_{\psi,\varphi}$ is finite if and only if $\tau_{\psi,\varphi}$ is finite.\end{theorem}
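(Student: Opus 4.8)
The plan is to exploit the product rule for the gradient together with the boundedness hypothesis, which guarantees that $M := \norm{W_{\psi,\varphi}}$ is finite. First I would record that $\psi \in \Bloch(D)$: since the constant function $1$ lies in $\Bloch(D)$ and $W_{\psi,\varphi}1 = \psi$, the membership is immediate from the fact that $W_{\psi,\varphi}$ maps $\Bloch(D)$ into itself. The substance of the theorem is the equivalence between the finiteness of $\sigma_{\psi,\varphi}$ and that of $\tau_{\psi,\varphi}$, and the engine for both directions is the identity
$$\nabla(\psi(f\circ\varphi))(z) = \psi(z)\nabla(f\circ\varphi)(z) + f(\varphi(z))\nabla\psi(z).$$
Pairing this with a vector $u$, dividing by $H_z(u,\conj u)^{1/2}$, and taking the supremum over $u \in \C^n \setminus \{0\}$ produces the two complementary pointwise estimates
$$\mod{\psi(z)}Q_{f\circ\varphi}(z) \leq Q_{\psi(f\circ\varphi)}(z) + \mod{f(\varphi(z))}Q_\psi(z),$$
$$\mod{f(\varphi(z))}Q_\psi(z) \leq Q_{\psi(f\circ\varphi)}(z) + \mod{\psi(z)}Q_{f\circ\varphi}(z),$$
valid for every $f \in \Bloch(D)$ and $z \in D$.

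For the direction asserting that finiteness of $\sigma_{\psi,\varphi}$ forces finiteness of $\tau_{\psi,\varphi}$, I would use the first estimate restricted to test functions $f$ with $f(0) = 0$ and $\beta_f \leq 1$. Then $\blochnorm{f} \leq 1$, so $Q_{\psi(f\circ\varphi)}(z) \leq \beta_{W_{\psi,\varphi}f} \leq M$, while Lemma~\ref{inequality:f(z) bound bloch} yields $\mod{f(\varphi(z))} \leq \omega(\varphi(z))$. Since $Q_{f\circ\varphi}(z)$ is unchanged when a constant is added to $f$, the supremum of the left-hand side over such $f$ recovers $\mod{\psi(z)}T_\varphi(z)$, and I obtain $\mod{\psi(z)}T_\varphi(z) \leq M + \omega(\varphi(z))Q_\psi(z)$. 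Taking the supremum over $z \in D$ gives $\tau_{\psi,\varphi} \leq M + \sigma_{\psi,\varphi}$.

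For the converse, I would use the second estimate. The step I expect to be the main obstacle is that the near-extremal function for $\omega(\varphi(z))$ depends on $z$, so no single global test function is available: instead, for each fixed $z$ and each $\epsilon > 0$, I would invoke the definition of $\omega$ to select $f$ with $f(0) = 0$, $\beta_f \leq 1$, and $\mod{f(\varphi(z))} \geq \omega(\varphi(z)) - \epsilon$. For this $f$ the second estimate bounds $(\omega(\varphi(z)) - \epsilon)Q_\psi(z)$ by $Q_{\psi(f\circ\varphi)}(z) + \mod{\psi(z)}Q_{f\circ\varphi}(z)$, where the first term is at most $M$ as before and the second is at most $\mod{\psi(z)}T_\varphi(z) \leq \tau_{\psi,\varphi}$ because $\beta_f \leq 1$. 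The key point is that the resulting bound $M + \tau_{\psi,\varphi}$ is uniform in both $z$ and the $z$-dependent choice of $f$; hence letting $\epsilon \to 0$ and then taking the supremum over $z$ yields $\sigma_{\psi,\varphi} \leq M + \tau_{\psi,\varphi}$, which closes the equivalence. The only care required is to keep the selection of test functions local to each $z$, a subtlety that the uniformity of the bound renders harmless.
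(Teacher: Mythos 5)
Your proposal is correct and takes essentially the same route as the paper's proof: the same product-rule identity and triangle inequality yield the two complementary bounds $\sigma_{\psi,\varphi}\leq\norm{W_{\psi,\varphi}}+\tau_{\psi,\varphi}$ and $\tau_{\psi,\varphi}\leq\norm{W_{\psi,\varphi}}+\sigma_{\psi,\varphi}$, with $\psi=W_{\psi,\varphi}1$ handling membership in $\Bloch(D)$. The only cosmetic differences are that you normalize test functions by subtracting $f(0)$ (the paper rescales via $g=(f-f(0))/\beta_f$) and you use an $\epsilon$-near-extremal selection for $\omega(\varphi(z))$ where the paper simply takes the supremum of a bound that is uniform in $f$.
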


\begin{proof} First observe that $\psi = W_{\psi,\varphi}1 \in \Bloch(D)$.  Let $f\in\Bloch(D)$, $z \in D$ and $u \in \C^n\setminus\{0\}$.  Then
$$\begin{aligned}
\frac{\mod{f(\varphi(z))}\mod{\nabla(\psi)(z)u}}{H_z(u,\conj{u})^{1/2}} &= \frac{\mod{\nabla(\psi(f\circ\varphi))(z)u - \psi(z)\nabla(f\circ\varphi)(z)u}}{H_z(u,\conj{u})^{1/2}}\\
&\leq \frac{\mod{\nabla(\psi(f\circ\varphi))(z)u}}{H_z(u,\conj{u})^{1/2}} + \frac{\mod{\psi(z)}{\mod{\nabla(f\circ\varphi)(z)u}}}{H_z(u,\conj{u})^{1/2}}
\end{aligned}$$  Taking the supremum over all $u \in \C^n\setminus\{0\}$, and using (\ref{testimate}) we get $$\begin{aligned}
\mod{f(\varphi(z))}Q_{\psi}(z) &\leq Q_{\psi(f\circ\varphi)}(z) + \mod{\psi(z)}Q_{f\circ\varphi}(z)\\
&\leq \beta_{\psi(f\circ\varphi)} + \mod{\psi(z)}T_\varphi(z)\beta_f\\
&\leq (\norm{W_{\psi,\varphi}} + \mod{\psi(z)}T_\varphi(z))\blochnorm{f}.
\end{aligned}$$  Taking the supremum over all $f \in \Bloch(D)$ with $f(0) = 0$ and $\blochnorm{f} \leq 1$, we have
$$\omega(\varphi(z))Q_\psi(z) \leq \norm{W_{\psi,\varphi}} + \mod{\psi(z)}T_\varphi(z).$$  Thus $\sigma_{\psi,\varphi} \leq \norm{W_{\psi,\varphi}} + \tau_{\psi,\varphi}.$

On the other hand, for $g\in\Bloch(D)$, with $g(0)=0$ and $\norm{g}_{\Bloch}\leq 1$, using Lemma~\ref{inequality:f(z) bound bloch}, we also obtain
$$\begin{aligned}
\mod{\psi(z)}Q_{g\circ\varphi}(z)&\leq Q_{\psi(g\circ\varphi)}(z)+\mod{g(\varphi(z))}Q_\psi(z)\\
&\leq \norm{W_{\psi,\varphi}g}_\Bloch+\omega(\varphi(z))Q_\psi(z)\\
&\leq \norm{W_{\psi,\varphi}}+\sigma_{\psi,\varphi}.\end{aligned}$$
More generally, for any non-constant function $f\in\Bloch(D)$, with $\beta_f\leq 1$, letting $g=(f-f(0))/\beta_f$, by the previous case, we obtain
$$\mod{\psi(z)}Q_{f\circ\varphi}(z)=\mod{\psi(z)}Q_{g\circ\varphi}(z)\beta_f\leq \norm{W_{\psi,\varphi}} + \sigma_{\psi,\varphi}.$$  Taking the supremum over all such functions $f$, we deduce $\tau_{\psi,\varphi}\leq \norm{W_{\psi,\varphi}} + \sigma_{\psi,\varphi}.$ Consequently, $\sigma_{\psi,\varphi}$ is finite if and only if $\tau_{\psi,\varphi}$ is finite.
\end{proof}

The proof of the following result is analogous.

\begin{proposition} \label{proposition:tau sigma finite} Let $D$ be a bounded homogeneous domain in $\C^n$, $\psi$ a holomorphic function on $D$, and $\varphi$ a holomorphic self-map of $D$.  If $W_{\psi,\varphi}$ is bounded on the $*$-little Bloch space of $D$, then $\psi \in \Bloch_{0^*}(D)$ and $\sigma_{0,\psi,\varphi}$ is finite if and only if $\tau_{0,\psi,\varphi}$ is finite.\end{proposition}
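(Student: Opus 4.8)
The plan is to follow the argument of Theorem~\ref{theorem:tau sigma finite} essentially verbatim, replacing every occurrence of the Bloch space $\Bloch(D)$ by the $*$-little Bloch space $\Bloch_{0^*}(D)$ and, correspondingly, $T_\varphi$, $\omega$, $\sigma_{\psi,\varphi}$, $\tau_{\psi,\varphi}$ by their starred analogues $T_{0,\varphi}$, $\omega_0$, $\sigma_{0,\psi,\varphi}$, $\tau_{0,\psi,\varphi}$. The first step is immediate: since the constant function $1$ lies in $\Bloch_{0^*}(D)$ and $W_{\psi,\varphi}$ is assumed bounded on $\Bloch_{0^*}(D)$, we obtain $\psi = W_{\psi,\varphi}1 \in \Bloch_{0^*}(D)$.

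For the inequality $\sigma_{0,\psi,\varphi} \leq \norm{W_{\psi,\varphi}} + \tau_{0,\psi,\varphi}$, I would fix $f \in \Bloch_{0^*}(D)$, $z \in D$, and $u \in \C^n\setminus\{0\}$, and use the product rule $\nabla(\psi(f\circ\varphi)) = \psi\,\nabla(f\circ\varphi) + (f\circ\varphi)\nabla\psi$ to isolate the term $\mod{f(\varphi(z))}\mod{\nabla(\psi)(z)u}$. Dividing by $H_z(u,\conj{u})^{1/2}$, taking the supremum over $u$, and invoking the $\Bloch_{0^*}(D)$ form of (\ref{testimate}) gives
$$\mod{f(\varphi(z))}Q_\psi(z) \leq Q_{\psi(f\circ\varphi)}(z) + \mod{\psi(z)}T_{0,\varphi}(z)\beta_f \leq \left(\norm{W_{\psi,\varphi}} + \mod{\psi(z)}T_{0,\varphi}(z)\right)\blochnorm{f}.$$
Taking the supremum over all $f \in \Bloch_{0^*}(D)$ with $f(0)=0$ and $\blochnorm{f}\leq 1$ converts the left-hand side into $\omega_0(\varphi(z))Q_\psi(z)$, and a further supremum over $z$ yields the stated bound.

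For the reverse inequality $\tau_{0,\psi,\varphi} \leq \norm{W_{\psi,\varphi}} + \sigma_{0,\psi,\varphi}$, I would run the symmetric estimate: for $g \in \Bloch_{0^*}(D)$ with $g(0)=0$ and $\blochnorm{g}\leq 1$, the same decomposition together with Lemma~\ref{inequality:f(z) bound bloch} gives
$$\mod{\psi(z)}Q_{g\circ\varphi}(z) \leq Q_{\psi(g\circ\varphi)}(z) + \mod{g(\varphi(z))}Q_\psi(z) \leq \norm{W_{\psi,\varphi}} + \sigma_{0,\psi,\varphi}.$$
To pass from this normalized class to the supremum defining $T_{0,\varphi}(z)$, I would note that for any non-constant $f \in \Bloch_{0^*}(D)$ with $\beta_f \leq 1$ the function $g = (f-f(0))/\beta_f$ again lies in $\Bloch_{0^*}(D)$, whence $\mod{\psi(z)}Q_{f\circ\varphi}(z) = \mod{\psi(z)}Q_{g\circ\varphi}(z)\beta_f$ inherits the same bound. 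A supremum over all such $f$ recovers $\mod{\psi(z)}T_{0,\varphi}(z)$, and then over $z$ gives $\tau_{0,\psi,\varphi}$.

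There is no genuine analytic obstacle here beyond bookkeeping; the one substantive point requiring care is that the starred space is preserved under the normalization $f \mapsto (f-f(0))/\beta_f$. This holds because $\Bloch_{0^*}(D)$ is a linear subspace and $Q_{f-f(0)} = Q_f$, so subtracting the value at the origin and rescaling neither disturbs the defining decay of $Q_f$ toward the distinguished boundary nor takes $f$ out of the space; consequently every supremum above is legitimately taken over $\Bloch_{0^*}(D)$ rather than $\Bloch(D)$, which is precisely what distinguishes this statement from Theorem~\ref{theorem:tau sigma finite}. Combining the two inequalities shows that $\sigma_{0,\psi,\varphi}$ and $\tau_{0,\psi,\varphi}$ are simultaneously finite.
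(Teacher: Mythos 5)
Your proposal is correct and is precisely the argument the paper intends: the paper proves this proposition by declaring it ``analogous'' to Theorem~\ref{theorem:tau sigma finite}, and you have carried out that analogy faithfully, with the starred substitutions $\omega_0$, $T_{0,\varphi}$, $\sigma_{0,\psi,\varphi}$, $\tau_{0,\psi,\varphi}$ in the right places. Your explicit verification that the normalization $f \mapsto (f-f(0))/\beta_f$ stays inside $\Bloch_{0^*}(D)$ is exactly the one point the word ``analogous'' glosses over, so nothing is missing.
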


	We shall now give a sufficient condition for the compactness of $W_{\psi,\varphi}$ which yields Theorem~3 of \cite{ShiLuo:00} in the special case when $\psi$ is identically one.  We first need the following result.

\begin{lemma}\label{suffcond} Let $D$ be a bounded homogeneous domain in $\C^n$, $\psi$ a holomorphic function on $D$, and $\varphi$ a holomorphic self-map of $D$. Then $W_{\psi,\varphi}$ is compact on $\Bloch(D)$ if and only if for each bounded sequence $\{f_k\}$ in $\Bloch(D)$ converging to 0 locally uniformly in $D$, $\norm{\psi(f_k\circ \varphi)}_\Bloch\to 0$, as $k\to \infty$.\end{lemma}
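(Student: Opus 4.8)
The plan is to prove both implications with a single engine: a norm-bounded subset of $\Bloch(D)$ is a normal family whose locally uniform limits again lie in $\Bloch(D)$, and convergence in the Bloch norm forces locally uniform convergence. Both facts rest on Lemma~\ref{inequality:f(z) bound bloch}. For $f\in\Bloch(D)$ and $z$ in a compact set $K\subseteq D$ it gives $\mod{f(z)}\leq\mod{f(0)}+\omega(z)\beta_f$, and since $\omega(z)\leq\rho(z,0)$ is bounded on $K$, a sequence bounded in Bloch norm is uniformly bounded on $K$; applying the same estimate to differences shows that $\blochnorm{h_k}\to 0$ implies $h_k\to 0$ uniformly on compacta. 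Throughout I take the implicit standing hypothesis that $W_{\psi,\varphi}$ is an operator on $\Bloch(D)$, i.e. maps $\Bloch(D)$ into itself, so that ``compact on $\Bloch(D)$'' is meaningful.

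For the necessity direction, I would assume $W_{\psi,\varphi}$ is compact and take $\{f_k\}$ bounded in $\Bloch(D)$ with $f_k\to 0$ locally uniformly, then argue by the subsequence principle: it suffices to show that every subsequence of $\{W_{\psi,\varphi}f_k\}$ has a further subsequence whose Bloch norm tends to $0$. Given a subsequence, compactness yields a further subsequence $W_{\psi,\varphi}f_{k_j}\to h$ in $\Bloch(D)$, and by the second engine fact this convergence is also locally uniform. On the other hand, since $\varphi$ carries compact subsets of $D$ into compact subsets of $D$ and $\psi$ is locally bounded, $f_{k_j}\to 0$ locally uniformly forces $\psi(f_{k_j}\circ\varphi)\to 0$ locally uniformly. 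Uniqueness of locally uniform limits gives $h=0$, hence $\blochnorm{W_{\psi,\varphi}f_{k_j}}\to 0$, and therefore $\blochnorm{W_{\psi,\varphi}f_k}\to 0$.

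For the sufficiency direction, I would assume the sequential condition and take an arbitrary bounded sequence $\{g_k\}$ in $\Bloch(D)$, aiming to extract a subsequence along which $\{W_{\psi,\varphi}g_k\}$ converges in the Bloch norm. By the normal-family fact, Montel's theorem provides a subsequence $g_{k_j}\to g$ locally uniformly; a lower-semicontinuity argument (the gradients converge locally uniformly by Cauchy's estimates, so $Q_g(z)=\lim_j Q_{g_{k_j}}(z)\leq\sup_k\blochnorm{g_k}$, then take the supremum over $z$) shows $g\in\Bloch(D)$. Then $h_j:=g_{k_j}-g$ is bounded in $\Bloch(D)$ and converges to $0$ locally uniformly, so the hypothesis yields $\blochnorm{W_{\psi,\varphi}(g_{k_j}-g)}\to 0$, i.e. $W_{\psi,\varphi}g_{k_j}\to W_{\psi,\varphi}g$ in the Bloch norm. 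Thus the image of every bounded sequence has a convergent subsequence, and $W_{\psi,\varphi}$ is compact.

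I expect the main obstacle to be the careful verification of the two engine facts rather than the two implications, which are then routine. Concretely, one must confirm that the unit ball of $\Bloch(D)$ is precompact in the topology of locally uniform convergence and that its locally uniform limits remain in $\Bloch(D)$; this is exactly where Lemma~\ref{inequality:f(z) bound bloch}, the local boundedness of $\omega$, and the lower semicontinuity of $\beta_{\,\cdot}$ under local uniform convergence are all needed. A secondary point to pin down is the standing assumption that $W_{\psi,\varphi}$ maps $\Bloch(D)$ into itself, so that $W_{\psi,\varphi}g$ in the sufficiency argument is a legitimate element of $\Bloch(D)$ against which the subsequence converges.
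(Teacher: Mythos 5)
Your proposal is correct and takes essentially the same route as the paper: necessity by extracting a norm-convergent subsequence and identifying its limit as $0$ through locally uniform convergence, and sufficiency via Montel's theorem, the fact that a locally uniform limit of a Bloch-norm-bounded sequence is again Bloch (the paper cites Theorem~3.3 of \cite{AllenColonna:07}, while you reprove it by lower semicontinuity, and your explicit subsequence principle and avoidance of the normalization $f_k(z_0)=0$ are harmless tidier bookkeeping), then applying the hypothesis to $g_{k_j}-g$. One cosmetic slip: $Q_g(z)=\lim_j Q_{g_{k_j}}(z)$ is an overstatement, since pointwise convergence of the gradients only yields $Q_g(z)\leq\liminf_j Q_{g_{k_j}}(z)$ (the supremum over $u$ and the limit need not commute), but this inequality is all your argument requires.
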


\begin{proof}  Assume $W_{\psi,\varphi}$ is compact on $\Bloch(D)$.  Let $\{f_k\}$ be a bounded sequence in $\Bloch(D)$ which converges to 0 locally uniformly in $D$.  By rescaling $f_k$, we may assume $\blochnorm{f_k} \leq 1$ for all $k \in \N$.  We need to show that $\blochnorm{\psi(f_k\circ\varphi)} \to 0$ as $k \to \infty$.  Since $W_{\psi,\varphi}$ is compact, the sequence $\{\psi(f_k\circ\varphi)\}$ has a subsequence (which for convenience we reindex as the original sequence) which converges in the Bloch norm to some function $f \in \Bloch(D)$.  We are going to show that $f$ is identically 0 by proving that $\psi(f_k\circ\varphi) \to 0$ locally uniformly.  Fix $z_0 \in D$ and, without loss of generality, assume $f(z_0) = 0$.  Then $\psi(z_0)f_k(\varphi(z_0)) \to 0$ as $k \to \infty$.  For $z \in D$, by (\ref{|f(z)-f(w)| < rho}), we obtain
$$\begin{aligned}
\mod{\psi(z)f_k(\varphi(z)) - f(z)} &\leq \mod{\psi(z)f_k(\varphi(z)) - f(z) - (\psi(z_0)f_k(\varphi(z_0)) - f(z_0))}\\
 &\qquad + \mod{\psi(z_0)f_k(\varphi(z_0))}\\
&\leq \blochnorm{\psi(f_k\circ\varphi) - f}\rho(z,z_0) + \mod{\psi(z_0)f_k(\varphi(z_0))} \to 0
\end{aligned}$$ locally uniformly as $k \to \infty$, since $\psi(f_k\circ\varphi)-f \to 0$ in norm.  On the other hand, $\psi(f_k\circ\varphi) \to 0$ locally uniformly, so $f$ must be identically 0.

Next, assume $\blochnorm{\psi(g_n\circ\varphi)} \to 0$ as $k \to \infty$ for each bounded sequence $\{g_k\}$ in $\Bloch(D)$ converging to 0 locally uniformly in $D$.  To prove the compactness of $W_{\psi,\varphi}$, it suffices to show that if $\{f_k\}$ is a sequence in $\Bloch(D)$ with
$\norm{f_k}_\Bloch\leq 1$ for all $k\in\N$, there exists a subsequence $\{f_{k_j}\}$ such that $\psi(f_{k_j}\circ \varphi)$ converges in $\Bloch(D)$. Fix $z_0\in D$. Replacing $f_k$ with $f_k-f_k(z_0)$, we may assume that $f_k(z_0)=0$ for all $k\in\N$. By (\ref{beta_f alternative}), $\mod{f_k(z)}\le \rho(z,z_0)$, for each $z\in D$. Thus, on each closed ball centered at $z_0$ with respect to the Bergman distance, the sequence $\{f_k\}$ is uniformly bounded, and hence also on each compact subset of $D$. By Montel's theorem, some subsequence $\{f_{k_j}\}$ converges locally uniformly to some function $f$ holomorphic on $D$. By Theorem~3.3 of \cite{AllenColonna:07}, $f$ is a Bloch function and $\norm{f}_\Bloch\leq 1$. Then, letting $g_{k_j}=f_{k_j}-f$, we obtain a bounded sequence in $\Bloch(D)$ converging to 0 locally uniformly in $D$. Thus, by the hypothesis, $\norm{\psi(g_{k_j}\circ\varphi)}_\Bloch\to 0$ as $k\to\infty$. Therefore, $\psi(f_{k_j}\circ \varphi)$ converges in norm to $\psi(f\circ \varphi)$, completing the proof.
\end{proof}

\begin{theorem}\label{bhdcomp} Let $D$ be a bounded homogeneous domain in $\C^n$, $\psi$ a holomorphic function on $D$, and $\varphi$ a holomorphic self-map of $D$.  If $\psi\in \Bloch(D)$, then $W_{\psi,\varphi}$ is compact on the Bloch space of $D$ if
\begin{eqnarray}
\lim_{\varphi(z)\to \partial D}\omega(\varphi(z))Q_\psi(z)= 0 \text{ and } \lim_{\varphi(z)\to \partial D}\mod{\psi(z)}T_{\varphi}(z)=0.\label{threecond}\end{eqnarray}
\end{theorem}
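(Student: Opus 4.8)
My plan is to apply the sequential criterion of Lemma~\ref{suffcond}, so that it suffices to fix a sequence $\{f_k\}\subset\Bloch(D)$ with $\blochnorm{f_k}\le 1$ that converges to $0$ locally uniformly on $D$ and to show $\blochnorm{\psi(f_k\circ\varphi)}\to 0$. Since $\blochnorm{\psi(f_k\circ\varphi)}=\mod{\psi(0)}\mod{f_k(\varphi(0))}+\beta_{\psi(f_k\circ\varphi)}$ and $f_k(\varphi(0))\to 0$ by local uniform convergence, the point-evaluation term disappears in the limit, and the task reduces to proving that $\beta_{\psi(f_k\circ\varphi)}=\sup_{z\in D}Q_{\psi(f_k\circ\varphi)}(z)\to 0$.

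Exactly as in the proof of Theorem~\ref{boundest}, the product rule gives for each $z\in D$
\[
Q_{\psi(f_k\circ\varphi)}(z)\le \mod{\psi(z)}\,Q_{f_k\circ\varphi}(z)+\mod{f_k(\varphi(z))}\,Q_\psi(z).
\]
Given $\varepsilon>0$, hypothesis (\ref{threecond}) furnishes a compact set $K\subset D$ with $\mod{\psi(z)}T_\varphi(z)<\varepsilon$ and $\omega(\varphi(z))Q_\psi(z)<\varepsilon$ whenever $\varphi(z)\notin K$. On the region $\{z:\varphi(z)\notin K\}$ I would bound the first summand by $\mod{\psi(z)}T_\varphi(z)<\varepsilon$, using (\ref{testimate}) and $\beta_{f_k}\le 1$, and the second by Lemma~\ref{inequality:f(z) bound bloch}, which gives $\mod{f_k(\varphi(z))}Q_\psi(z)\le \mod{f_k(0)}\beta_\psi+\omega(\varphi(z))Q_\psi(z)\le \mod{f_k(0)}\beta_\psi+\varepsilon$; since $f_k(0)\to 0$, this contribution is eventually at most $2\varepsilon$.

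The genuinely delicate region is $\{z:\varphi(z)\in K\}$, and I expect this to be the crux. The second summand is controlled by uniform convergence on the compact set $K$, namely $\mod{f_k(\varphi(z))}Q_\psi(z)\le (\sup_{w\in K}\mod{f_k(w)})\beta_\psi\to 0$. For the first summand I would upgrade local uniform convergence of $\{f_k\}$ to $\sup_{w\in K}Q_{f_k}(w)\to 0$ -- this follows from the Cauchy estimates for $\nabla f_k$ together with a lower bound for the Bergman metric on $K$ -- and then apply (\ref{inequality:B_varphi}) to get $Q_{f_k\circ\varphi}(z)\le B_\varphi(z)\,Q_{f_k}(\varphi(z))\le B_\varphi(z)\sup_{w\in K}Q_{f_k}(w)$. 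The obstacle is that $\{z:\varphi(z)\in K\}$ need not be relatively compact, so $z$ may tend to $\partial D$ while $\varphi(z)$ remains in $K$, and there both $\mod{\psi(z)}$ and $B_\varphi(z)$ can be unbounded; thus I must control $\mod{\psi(z)}B_\varphi(z)$ on this set. Here I would invoke the finiteness of $\tau_{\psi,\varphi}$ (equivalently of $\sigma_{\psi,\varphi}$, by Theorem~\ref{theorem:tau sigma finite}), which guarantees $W_{\psi,\varphi}$ is bounded by Theorem~\ref{boundest}, together with the identity $T_\varphi=B_\varphi$ that holds on these domains (as recorded for the disk above, and for the ball and polydisk in the later sections); this turns the estimate into $\mod{\psi(z)}Q_{f_k\circ\varphi}(z)\le \tau_{\psi,\varphi}\sup_{w\in K}Q_{f_k}(w)\to 0$ uniformly on the region. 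Combining the two regions yields $\limsup_k\beta_{\psi(f_k\circ\varphi)}\le 2\varepsilon$, and letting $\varepsilon\to 0$ completes the argument. The step I anticipate as hardest is precisely this passage from the pointwise smallness of $Q_{f_k\circ\varphi}$ to smallness that is uniform against the weight $\mod{\psi}B_\varphi$ on a non-compact region, which is where the finiteness of $\tau_{\psi,\varphi}$ (and hence the boundedness of $W_{\psi,\varphi}$) is indispensable.
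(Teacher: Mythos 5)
Your skeleton is the same as the paper's: reduce to the sequential criterion of Lemma~\ref{suffcond}, split $Q_{\psi(f_k\circ\varphi)}(z)\le\mod{\psi(z)}Q_{f_k\circ\varphi}(z)+\mod{f_k(\varphi(z))}Q_\psi(z)$ by the product rule, and treat separately the region where $\varphi(z)$ is near $\partial D$ (where your estimates, via (\ref{testimate}) and Lemma~\ref{inequality:f(z) bound bloch}, match the paper's verbatim) and the region $\{z\in D:\varphi(z)\in K\}$. The divergence, and the gap, is in how you close the second region. Your chain $\mod{\psi(z)}Q_{f_k\circ\varphi}(z)\le\mod{\psi(z)}B_\varphi(z)\sup_{w\in K}Q_{f_k}(w)\le\tau_{\psi,\varphi}\sup_{w\in K}Q_{f_k}(w)$ uses two ingredients the theorem does not supply. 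First, the identity $T_\varphi=B_\varphi$ is recorded in the paper only for $D=\D$; on a general bounded homogeneous domain only $T_\varphi(z)\le B_\varphi(z)$ is known (inequality (\ref{T-B inequality})), and this points the wrong way for your purpose: passing from a bound on $\mod{\psi}T_\varphi$ to one on $\mod{\psi}B_\varphi$ would require $B_\varphi\le C\,T_\varphi$, i.e.\ the existence, for each $w\in D$ and each pointwise-extremal cotangent datum, of a global Bloch function with $\beta_f\le 1$ realizing it -- exactly what is unknown beyond $\D$, $\B_n$, $\D^n$. Second, the finiteness of $\tau_{\psi,\varphi}$ is not among the hypotheses of Theorem~\ref{bhdcomp}: the stated assumptions are $\psi\in\Bloch(D)$ and the limits (\ref{threecond}), which say nothing about $\mod{\psi(z)}T_\varphi(z)$ where $\varphi(z)$ stays away from $\partial D$, and Theorem~\ref{theorem:tau sigma finite} cannot supply it because it presupposes boundedness of $W_{\psi,\varphi}$, which is part of what must be established. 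The gap is not cosmetic: on $\D$, take $\varphi=B/2$ with $B$ an interpolating Blaschke product whose zeros $a_k\to 1$, and $\psi(z)=\Log\frac{2}{1-z}$; then the limits in (\ref{threecond}) hold vacuously since $\varphi(\D)$ is relatively compact, yet $\tau_{\psi,\varphi}\ge\frac12\mod{\psi(a_k)}(1-\mod{a_k}^2)\mod{B'(a_k)}\to\infty$, so by Theorem~\ref{thm:wco bounded/compact} $W_{\psi,\varphi}$ is not even bounded. So $\tau_{\psi,\varphi}<\infty$ genuinely must be assumed, not derived.

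To your credit, the difficulty you flag is real and is silently passed over by the paper itself: the paper handles $\{z:\rho(\varphi(z),\partial D)\ge r\}$ by asserting only that $\mod{f_k(\varphi(z))}\to 0$ and $Q_{f_k\circ\varphi}\to 0$ uniformly there (the latter as you argue, via Cauchy estimates and $Q_{f_k\circ\varphi}(z)\le B_\varphi(z)Q_{f_k}(\varphi(z))$ with $\sup_z B_\varphi(z)$ finite on a bounded homogeneous domain, cf.\ \cite{AllenColonna:07}) and then concludes $Q_{\psi(f_k\circ\varphi)}(z)<\epsilon$ -- tacitly ignoring that the weight $\mod{\psi(z)}$ multiplying $Q_{f_k\circ\varphi}(z)$ may be unbounded on this region, which is non-compact in $z$. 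The intended reading, consistent with the Conjecture and with Theorems~\ref{theorem:wco bounded/compact B_n} and \ref{theorem:wco bounded D^n} (``the bounded operator $W_{\psi,\varphi}$ is compact if\dots''), is that $W_{\psi,\varphi}$ is assumed bounded, equivalently $\sigma_{\psi,\varphi},\tau_{\psi,\varphi}<\infty$; under that reading your first imported ingredient becomes legitimate, but your final inequality still hinges on $B_\varphi\le C\,T_\varphi$, which remains unproved for general bounded homogeneous domains. In short: same strategy as the paper, correct near-boundary analysis, a sharp eye for the true crux -- but your resolution of that crux rests on an identity ($T_\varphi=B_\varphi$) and a hypothesis ($\tau_{\psi,\varphi}<\infty$) that the statement, as given, does not provide.
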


\begin{proof} Assume the conditions in (\ref{threecond}) hold.  By Lemma \ref{suffcond}, to prove that $W_{\psi,\varphi}$ is compact on $\Bloch(D)$ it suffices to show that for any sequence $\{f_k\}$ in $\Bloch(D)$ converging to 0 locally uniformly in $D$ such that $\norm{f_k}_{\Bloch}\leq 1$, $\norm{\psi(f_k\circ\varphi)}_\Bloch\to 0$ as $k\to\infty$. Let $\{f_k\}$ be such a sequence and fix $\epsilon>0$.  Then $\mod{f_k(0)}<\epsilon/(3\norm{\psi}_\Bloch)$ for all $k$ sufficiently large and there exists $r$ such that for all $k\in \N$, $\mod{\psi(z)}Q_{f_k\circ\varphi}(z)<\epsilon/3$ and $\omega(\varphi(z))Q_\psi(z)<\epsilon/3$, whenever $\rho(\varphi(z),\partial D)<r$. Thus by Lemma~\ref{inequality:f(z) bound bloch}, if $\rho(\varphi(z),\partial D)<r$, then
\begin{eqnarray} Q_{\psi(f_k\circ\varphi)}(z)&\leq& \mod{\psi(z)}Q_{f_k\circ\varphi}(z)+\mod{f_k(\varphi(z))}Q_\psi(z)\nonumber\\
&<&\frac{\epsilon}{3}+(\mod{f_k(0)}+\omega(\varphi(z)))Q_\psi(z)<\epsilon.\nonumber\end{eqnarray}
On the other hand, since $f_k\to 0$ locally uniformly in $D$, $\mod{f_k(\varphi(z))}\to 0$ and $Q_{f_k\circ \varphi}\to 0$ uniformly on the set $\{z\in D: \rho(\varphi(z),\partial D)\ge r\}$. Consequently, for all $k$ sufficiently large, $Q_{\psi(f_k\circ \varphi)}(z)<\epsilon$ for all $z\in D$. Furthermore, $\mod{\psi(0)f_k(\varphi(0))}\to 0$ as $k\to\infty$, so $\norm{\psi(f_k\circ \varphi)}_\Bloch\to 0$, completing the proof.
\end{proof}

\begin{remark} Even for composition operators, the necessity of the analogue to Theorem \ref{bhdcomp} was established for the unit ball and polydisk \cite{ShiLuo:00}, but not for general bounded homogeneous domains.
\end{remark}

We end the section with the following conjecture.

\begin{conjecture} Let $D$ be a bounded homogeneous domain in $\C^n$, $\psi$ a holomorphic function on $D$, and $\varphi$ a holomorphic self-map of $D$.  Then $W_{\psi,\varphi}$ is bounded on the Bloch space of $D$ if and only if $\psi \in \Bloch(D)$, and $\sigma_{\psi,\varphi}$ and $\tau_{\psi,\varphi}$ are finite. Furthermore, the bounded operator $W_{\psi,\varphi}$ is compact on $\Bloch(D)$ if and only if $$\lim_{\varphi(z)\to \partial D}\;\omega(\varphi(z))Q_\psi(z) = \lim_{\varphi(z)\to \partial D}\;\mod{\psi(z)}T_\varphi(z) = 0.$$\end{conjecture}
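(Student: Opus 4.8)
The plan is to establish only the two \emph{necessity} implications, since the sufficiency of the boundedness conditions is Theorem~\ref{boundest} and the sufficiency of the compactness conditions is Theorem~\ref{bhdcomp}. In both directions the uniform strategy is to feed $W_{\psi,\varphi}$ a family of test functions of controlled Bloch norm which, after applying the product rule
$$\nabla(\psi(f\circ\varphi))(z) = \psi(z)\,\nabla(f\circ\varphi)(z) + f(\varphi(z))\,\nabla(\psi)(z),$$
isolate at a chosen point $z$ exactly one of the two integrands $\mod{\psi(z)}T_\varphi(z)$ and $\omega(\varphi(z))Q_\psi(z)$, while forcing the cross term to vanish. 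Because Theorem~\ref{theorem:tau sigma finite} already shows that, for a bounded $W_{\psi,\varphi}$, $\sigma_{\psi,\varphi}$ is finite if and only if $\tau_{\psi,\varphi}$ is, the whole boundedness statement reduces to proving $\tau_{\psi,\varphi}<\infty$; similarly, I would reduce the compactness statement to the single limit $\lim_{\varphi(z)\to\partial D}\mod{\psi(z)}T_\varphi(z)=0$ and then recover the companion limit for $\sigma$.

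\textbf{Necessity of boundedness.} Assume $W_{\psi,\varphi}$ is bounded, so $\psi=W_{\psi,\varphi}1\in\Bloch(D)$. Fix $z\in D$, set $w=\varphi(z)$, and choose $f$ with $\beta_f\le 1$ and $Q_{f\circ\varphi}(z)$ arbitrarily close to $T_\varphi(z)$; replacing $f$ by $f-f(w)$ preserves $\beta_f$ and $Q_{f\circ\varphi}(z)$ but makes $f(w)=0$, so the cross term drops and
$$\mod{\psi(z)}Q_{f\circ\varphi}(z)=Q_{\psi(f\circ\varphi)}(z)\le\beta_{\psi(f\circ\varphi)}\le\norm{W_{\psi,\varphi}}\blochnorm{f}.$$
By (\ref{beta_f alternative}), $\blochnorm{f}=\mod{f(0)-f(w)}+\beta_f\le\rho(0,w)+1$, whence $\mod{\psi(z)}T_\varphi(z)\le\norm{W_{\psi,\varphi}}\bigl(\rho(0,\varphi(z))+1\bigr)$. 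This already gives $\tau_{\psi,\varphi}<\infty$ \emph{whenever $\varphi(D)$ stays within a Bergman-bounded, hence relatively compact, subset of $D$}. The entire difficulty is thus concentrated in the regime $\varphi(z)\to\partial D$, where $\rho(0,\varphi(z))\to\infty$ and this crude estimate collapses. To repair it one needs, for each $w$ near $\partial D$, a test function that vanishes at $w$, is near-extremal for $T_\varphi(z)$, \emph{and} has Bloch norm bounded independently of $w$; on $\D$, $\B_n$, and $\D^n$ these are furnished by bounded automorphism-type functions such as $\frac{w-\zeta}{1-\conj{w}\zeta}$, whose Bloch norm stays bounded even as $w\to\partial D$ precisely because they are bounded holomorphic functions, for which Timoney's inequality $\blochnorm{f}\le c_D\supnorm{f}$ applies.

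\textbf{Necessity of compactness.} Suppose $W_{\psi,\varphi}$ is compact and, for contradiction, that there is a sequence $\{z_k\}$ with $\varphi(z_k)\to\partial D$ and $\mod{\psi(z_k)}T_\varphi(z_k)\ge\delta>0$. I would manufacture a sequence $\{f_k\}$ with $\blochnorm{f_k}$ bounded, $f_k(\varphi(z_k))=0$, $Q_{f_k\circ\varphi}(z_k)\ge\frac12 T_\varphi(z_k)$, and $f_k\to 0$ locally uniformly in $D$. Granting this, Lemma~\ref{suffcond} forces $\blochnorm{\psi(f_k\circ\varphi)}\to 0$, while the product rule and $f_k(\varphi(z_k))=0$ give
$$\blochnorm{\psi(f_k\circ\varphi)}\ge Q_{\psi(f_k\circ\varphi)}(z_k)=\mod{\psi(z_k)}Q_{f_k\circ\varphi}(z_k)\ge\tfrac12\mod{\psi(z_k)}T_\varphi(z_k)\ge\tfrac{\delta}{2},$$
a contradiction, yielding the limit for $\tau$. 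The companion limit $\lim_{\varphi(z)\to\partial D}\omega(\varphi(z))Q_\psi(z)=0$ I would then obtain from the symmetric inequality
$$\mod{f_k(\varphi(z_k))}Q_\psi(z_k)\le\blochnorm{\psi(f_k\circ\varphi)}+\mod{\psi(z_k)}Q_{f_k\circ\varphi}(z_k),$$
now choosing $f_k$ with $f_k(0)=0$, $\blochnorm{f_k}\le 1$, decaying to $0$ locally uniformly, and $\mod{f_k(\varphi(z_k))}$ near $\omega(\varphi(z_k))$: the second term is controlled by the already-established $\tau$-limit (via (\ref{testimate})) and the first tends to $0$ by Lemma~\ref{suffcond}. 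On $\D$, $\B_n$, and $\D^n$ the requisite $f_k$ are explicit; for the $\tau$-limit one may take combinations like $\frac{1-\norm{a_k}^2}{1-\ip{\zeta,a_k}}-\bigl(\frac{1-\norm{a_k}^2}{1-\ip{\zeta,a_k}}\bigr)^2$ with $a_k=\varphi(z_k)$, which vanish at $a_k$, carry the correct extremal derivative there, have bounded Bloch norm, and decay to $0$ on compacta.

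\textbf{The main obstacle.} The single point on which the whole argument turns — and the reason the statement remains a conjecture for a general bounded homogeneous domain — is the construction, at points $w=\varphi(z)$ approaching $\partial D$, of normalized test functions with three simultaneous properties: uniformly bounded Bloch norm, near-extremality for $T_\varphi$ (respectively $\omega$), and, in the compact case, local uniform decay. For $\D$ this is the classical Ohno--Zhao construction, and for $\B_n$ and $\D^n$ the explicit automorphisms and Möbius-type bounded functions carry it through, which is exactly why Sections~5 and~6 can settle those cases. For an arbitrary homogeneous domain the extremal problems defining $\omega$ and $T_\varphi$ are not solved in closed form, so one would instead have to produce bounded near-extremizers at boundary points abstractly — presumably by transporting a single fixed bounded Bloch function to $w$ through the transitive action of $\Aut(D)$ and exploiting the Möbius invariance $\beta_{f\circ\phi}=\beta_f$ — and it is precisely this boundary construction that is currently unavailable.
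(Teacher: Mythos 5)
First, a point of calibration: the paper itself does not prove this statement for a general bounded homogeneous domain --- it is explicitly left as a conjecture, settled only for $D=\B_n$ and $D=\D^n$ in Sections 5 and 6 --- so the correct benchmark is the paper's special-case proofs, and against those your architecture is faithful. Sufficiency via Theorems \ref{boundest} and \ref{bhdcomp}, the reduction of boundedness-necessity to $\tau_{\psi,\varphi}<\infty$ through Theorem \ref{theorem:tau sigma finite}, and compactness-necessity through Lemma \ref{suffcond} applied to bounded sequences decaying locally uniformly are exactly the skeleton the paper uses, and your diagnosis that everything turns on boundary test functions of uniformly bounded Bloch norm is precisely why the general case is open. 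Indeed, your closing idea of transporting extremizers by $\Aut(D)$ hits a quantifiable wall: if $g(0)=0$, $\beta_g\le 1$, and $\phi\in\Aut(D)$ with $\phi(w)=0$, then $f=g\circ\phi$ vanishes at $w$ with $\beta_f=\beta_g$ by M\"obius invariance, but $\mod{f(0)}=\mod{g(\phi(0))}$ can be as large as $\rho(0,w)$, so the norm anchored at the base point blows up exactly in the regime $w\to\partial D$ that matters; what is needed is a family of near-extremizers lying in $H^\infty(D)$ with uniform sup-norm control, to which Timoney's bound $\blochnorm{f}\le c_D\supnorm{f}$ applies --- available explicitly on $\D$, $\B_n$, $\D^n$ and not in general. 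I also credit one genuine simplification over the paper's ball argument: once the $\tau$-limit is in hand, your bound $\mod{\psi(z_k)}Q_{f_k\circ\varphi}(z_k)\le\mod{\psi(z_k)}T_\varphi(z_k)\beta_{f_k}$ from (\ref{testimate}) kills the cross term in the $\omega(\varphi(z))Q_\psi(z)$ step directly, whereas the paper must introduce the auxiliary kernels $h_k$ and prove (\ref{toshow}) separately.

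There is, however, one step in your plan that fails as written even on $\B_n$: the direct $\tau$-necessity via near-extremizers for $T_\varphi$. Your candidates $f_k=h-h^2$ with $h(\zeta)=\frac{1-\norm{a_k}^2}{1-\ip{\zeta,a_k}}$, $a_k=\varphi(z_k)$, do vanish at $a_k$, but $\nabla f_k(a_k)=-\nabla h(a_k)$ is parallel to $\conj{a_k}$, so $Q_{f_k\circ\varphi}(z_k)$ detects only the functional $u\mapsto\mod{\ip{J\varphi(z_k)u,a_k}}/(1-\norm{a_k}^2)$; if $J\varphi(z_k)u$ is (nearly) tangent to the sphere through $a_k$ this (nearly) vanishes while $H_{a_k}(J\varphi(z_k)u,\conj{J\varphi(z_k)u})^{1/2}$ need not, so the requirement $Q_{f_k\circ\varphi}(z_k)\ge\tfrac12 T_\varphi(z_k)$ is not met and your contradiction does not close. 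The paper never extremizes $T_\varphi$ at all: in both special cases it imports the Zhou--Chen characterizations (Theorems \ref{thm:ZC} and \ref{thm:ZC-1}), phrased in terms of $B_\varphi$, and passes to $\tau_{\psi,\varphi}$ through the one-sided comparison $T_\varphi\le B_\varphi$ of (\ref{T-B inequality}), reserving hand-built test functions (the squared-logarithm $f_k$ and the kernels $h_k$) for the $\omega(\varphi(z))Q_\psi(z)$ conditions alone. Remark \ref{remarkcompact} is a telling caution here: even on the ball the authors could not pass directly between the $B_\varphi$-based and the $(T_\varphi,\omega)$-based compactness conditions, which is exactly the delicacy your radial test functions run into. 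So: your reduction and your identification of the obstruction are sound and match the paper's state of knowledge, but in the special cases your route needs either genuinely extremal functions for $T_\varphi$ (which you do not have) or, as in the paper, an external characterization in terms of the larger quantity $B_\varphi$.
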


In the next two sections, we prove the above conjecture when $D$ is the unit ball or the unit polydisk.

%-------------------------------------------------------------------------------------------------
% Special Case: The Unit Ball
%-------------------------------------------------------------------------------------------------
\section{Special Case: The Unit Ball}
In Theorem 3.1 of \cite{Zhu:04}, the following useful formula for calculating the Bloch semi-norm of a function $f \in \Bloch(\B_n)$ was given. For $z \in \B_n$ \begin{equation}\label{Q_f_zhu}Q_f(z) = (1-\norm{z}^2)^{1/2}\left(\norm{\nabla(f)(z)}^2 - \mod{\sum_{j=1}^n z_j\frac{\partial f}{\partial z_j}(z)}^2\right)^{1/2}.\end{equation}  Zhou and Chen characterized the bounded and the compact weighted composition operators on the Bloch space of the unit ball under the norm
\begin{equation}\label{normball2} \mod{f(0)} + \sup_{z \in \B_n} (1-\norm{z}^2)\norm{\nabla f(z)},\end{equation} which is equivalent to the Bloch norm on $\B_n$ \cite{Timoney:80-I}.  The following theorem is a special case of Corollaries 1.4 and 1.6 of \cite{ZhouChen:05}; their results apply to a large set of function spaces which includes the Bloch space.

\begin{theorem}[\cite{ZhouChen:05}]\label{thm:ZC} Let $\psi$ be a holomorphic function of $\B_n$ and $\varphi$ a holomorphic self-map of $\B_n$.  Then
$W_{\psi,\varphi}$ is bounded on $\Bloch(\B_n)$ if and only if
$$\sup_{z \in \B_n}\;\mod{\psi(z)}B_\varphi(z) < \infty,\hbox{ and }\sup_{z \in \B_n}\;(1-\norm{z}^2)\norm{\nabla \psi(z)}\log\frac{2}{1-\norm{\varphi(z)}^2} < \infty.$$  Furthermore, $W_{\psi,\varphi}$ is compact if and only if
\begin{eqnarray}&\lim\limits_{\norm{\varphi(z)}\to 1}\mod{\psi(z)}B_\varphi(z) = 0, \hbox{ and} \nonumber\\
\lim\limits_{\norm{\varphi(z)}\to 1}\hskip-7pt&(1-\norm{z}^2)\norm{\nabla \psi(z)}\displaystyle\log\frac{2}{1-\norm{\varphi(z)}^2} = 0.\label{compactcondition2}\end{eqnarray}
\end{theorem}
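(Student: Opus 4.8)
The plan is to deduce Theorem~\ref{thm:ZC} from the general machinery of Section~\ref{section:wco bhd} by specializing to $\B_n$, where the abstract quantities $\omega$, $Q_f$ and $B_\varphi$ admit the explicit descriptions of Remark~\ref{remark_section_4} and equation~(\ref{Q_f_zhu}). The whole argument rests on two elementary comparisons valid on $\B_n$. First, for every holomorphic $g$ on $\B_n$ and every $z$, Cauchy--Schwarz gives $\mod{\sum_j z_j \partial_j g(z)} \le \norm{z}\,\norm{\nabla g(z)}$, so that from (\ref{Q_f_zhu}),
$$(1-\norm{z}^2)\norm{\nabla g(z)} \le Q_g(z) \le (1-\norm{z}^2)^{1/2}\norm{\nabla g(z)}.$$
Second, writing $t=\norm{\varphi(z)}$ and using $\omega(\varphi(z)) = \frac{1}{2}\log\frac{1+t}{1-t}$, one checks the two-sided estimate $\omega(\varphi(z)) \le \log\frac{2}{1-t^2} \le \log 2 + 2\,\omega(\varphi(z))$. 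These let me pass freely between the invariant quantities $\sigma_{\psi,\varphi}$, $\tau_{\psi,\varphi}$ and the Euclidean/logarithmic quantities in the statement, once the region where $\norm{\varphi(z)}$ stays bounded away from $1$ is absorbed into the hypothesis $\psi\in\Bloch(\B_n)$.

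For sufficiency I would argue directly, using that (\ref{normball2}) is equivalent to the Bloch norm. Note first that since $\log\frac{2}{1-\norm{\varphi(z)}^2}\ge\log 2$, finiteness of the second hypothesis already forces $\sup_z(1-\norm{z}^2)\norm{\nabla\psi(z)}<\infty$, i.e. $\psi\in\Bloch(\B_n)$. For $f\in\Bloch(\B_n)$ the product rule gives $\nabla(\psi(f\circ\varphi)) = \psi\,\nabla(f\circ\varphi) + (f\circ\varphi)\nabla\psi$, and I would estimate the two terms separately. For the first, the left inequality above together with (\ref{inequality:B_varphi}) yields $(1-\norm{z}^2)\norm{\nabla(f\circ\varphi)(z)} \le Q_{f\circ\varphi}(z) \le B_\varphi(z)\beta_f$, so $\mod{\psi(z)}(1-\norm{z}^2)\norm{\nabla(f\circ\varphi)(z)}$ is controlled by $\sup_z\mod{\psi(z)}B_\varphi(z)$. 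For the second, Lemma~\ref{inequality:f(z) bound bloch} and $\omega(\varphi(z))\le\log\frac{2}{1-\norm{\varphi(z)}^2}$ give $\mod{f(\varphi(z))}(1-\norm{z}^2)\norm{\nabla\psi(z)} \le (\mod{f(0)}+\omega(\varphi(z))\beta_f)(1-\norm{z}^2)\norm{\nabla\psi(z)}$, which is controlled by $\psi\in\Bloch(\B_n)$ and the second hypothesis. Adding these and estimating $\mod{\psi(0)f(\varphi(0))}$ by Lemma~\ref{inequality:f(z) bound bloch} shows $W_{\psi,\varphi}f\in\Bloch(\B_n)$ with norm bounded by a constant multiple of $\blochnorm{f}$.

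For necessity, $\psi=W_{\psi,\varphi}1\in\Bloch(\B_n)$ is immediate, and the two supremum conditions must be extracted by testing $W_{\psi,\varphi}$ against unit-norm Bloch functions depending on $z$ (equivalently on $a=\varphi(z)$). For $\sup_z\mod{\psi(z)}B_\varphi(z)<\infty$ I would first upgrade (\ref{T-B inequality}) to the equality $T_\varphi(z)=B_\varphi(z)$ on $\B_n$ by producing, in each direction, a test function realizing the Bergman metric at $a$; the natural candidates are $\ip{\varphi_a(\cdot),\eta}$, where $\varphi_a$ is the automorphism of $\B_n$ interchanging $0$ and $a$ and $\eta$ is a suitable unit vector, exactly as the disk computation after (\ref{T-B inequality}) used $\frac{\varphi(z)-w}{1-\conj{\varphi(z)}w}$. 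These have $\beta_f\le1$ and, since $\varphi_a(a)=0$, vanish at $a$, so the cross term $\mod{f(\varphi(z))}Q_\psi(z)$ drops out and $\mod{\psi(z)}T_\varphi(z)\le Q_{\psi(f\circ\varphi)}(z)\le\norm{W_{\psi,\varphi}}$ follows.

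The logarithmic condition is the main obstacle. To extract $\sup_z(1-\norm{z}^2)\norm{\nabla\psi(z)}\log\frac{2}{1-\norm{\varphi(z)}^2}$ one must construct, for each boundary-approaching $a=\varphi(z)$, test functions whose size at $a$ grows like $\log\frac{2}{1-\norm{a}^2}$ while keeping $\beta_f\le1$ and, crucially, arranging cancellation so that the $\psi\,\nabla(f\circ\varphi)$ term does not interfere with the $(f\circ\varphi)\nabla\psi$ term. This is the higher-dimensional analogue of the Ohno--Zhao construction behind Theorem~\ref{thm:wco bounded/compact}: the one-variable functions $\log\frac{2}{1-\conj{a}w}$ and their squares must be replaced by functions of $\ip{w,a}$, and the Bloch-seminorm bounds verified through (\ref{Q_f_zhu}) rather than a one-dimensional estimate. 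The compactness statement then follows along the same lines — sufficiency from the sequential criterion of Lemma~\ref{suffcond} (or from Theorem~\ref{bhdcomp} after checking that (\ref{compactcondition2}) matches (\ref{threecond}) via the two comparisons above), and necessity by applying the same test functions along a sequence with $\norm{\varphi(z_k)}\to1$, normalized to converge to $0$ locally uniformly.
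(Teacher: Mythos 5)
A point of orientation first: the paper does not prove Theorem~\ref{thm:ZC} at all --- it is quoted as a special case of Corollaries 1.4 and 1.6 of \cite{ZhouChen:05} and then used as an ingredient in the proof of Theorem~\ref{theorem:wco bounded/compact B_n}. So your proposal is not paralleling an argument in the paper; it reconstructs the cited result from the Section~\ref{section:wco bhd} machinery, and most of it is sound. The pointwise comparison $(1-\norm{z}^2)\norm{\nabla g(z)}\le Q_g(z)\le (1-\norm{z}^2)^{1/2}\norm{\nabla g(z)}$ is correct, the upgrade of (\ref{T-B inequality}) to $T_\varphi(z)=B_\varphi(z)$ on $\B_n$ via $f=\ip{\varphi_a(\cdot),\eta}$ is legitimate (M\"obius invariance of $Q$ plus the fact that $J\varphi_a(a)$ is an isometry for the Bergman metric), and the log and squared-log test functions you invoke are exactly the ones the paper itself deploys later in the proof of Theorem~\ref{theorem:wco bounded/compact B_n}(b), where the seminorm bounds $Q_{g_k}\le 2$ are verified through (\ref{Q_f_zhu}). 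Two refinements: for the boundedness necessity of the logarithmic condition no delicate cancellation is needed --- the unsquared $g_a(w)=\Log\frac{2}{1-\ip{w,a}}$ with $a=\varphi(z)$ has uniformly bounded Bloch norm, and the interfering term $\mod{\psi(z)}Q_{g_a\circ\varphi}(z)\le 2\mod{\psi(z)}B_\varphi(z)$ is simply absorbed by the first condition, which you prove first; the squared log is only needed for compactness, where the test functions must also tend to $0$ locally uniformly (note $\ip{\varphi_{a_k}(\cdot),\eta_k}$ does \emph{not} --- it tends to a constant as $a_k\to\partial\B_n$ --- so it must be recentred, at the harmless cost of a cross term bounded by $Q_\psi(z_k)\to 0$).

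The genuine gap is your parenthetical route for compactness sufficiency: ``or from Theorem~\ref{bhdcomp} after checking that (\ref{compactcondition2}) matches (\ref{threecond}) via the two comparisons above.'' This step fails, and the paper explicitly warns about it in Remark~\ref{remarkcompact}. Theorem~\ref{bhdcomp} requires $\omega(\varphi(z))Q_\psi(z)\to 0$, whereas (\ref{compactcondition2}) gives $(1-\norm{z}^2)\norm{\nabla\psi(z)}\log\frac{2}{1-\norm{\varphi(z)}^2}\to 0$; your comparison bounds $Q_\psi(z)$ above only by $(1-\norm{z}^2)^{1/2}\norm{\nabla\psi(z)}$, and the gap factor $(1-\norm{z}^2)^{-1/2}$ is unbounded near $\partial\B_n$. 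The equivalence of (\ref{normball2}) with the Bloch norm holds at the level of suprema of a single function and does not transfer to the weighted pointwise quantities $\omega(\varphi(z))Q_\psi(z)$, since the weight varies with $z$; so the claim in your opening paragraph that you may ``pass freely'' between the invariant and Euclidean quantities is too strong --- each implication must use the comparison in its valid pointwise direction (as your boundedness argument in fact does). Your primary route for compactness sufficiency --- the sequential criterion of Lemma~\ref{suffcond} run directly in the norm (\ref{normball2}), splitting $\nabla(\psi(f_k\circ\varphi))$ as in your boundedness estimate, handling $\norm{\varphi(z)}>s$ by (\ref{compactcondition2}) and $\norm{\varphi(z)}\le s$ by local uniform convergence together with $\sup_z\mod{\psi(z)}B_\varphi(z)<\infty$ --- does go through; keep that argument and delete the Theorem~\ref{bhdcomp} shortcut.
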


	We now show that the bounded and compact weighted composition operators can also be characterized in terms of the quantities $\sigma_{\psi,\varphi}$ and $\tau_{\psi,\varphi}$.

\begin{theorem}\label{theorem:wco bounded/compact B_n} Let $\psi$ be a holomorphic function on $\B_n$ and $\varphi$ a holomorphic self-map of $\B_n$.  Then
\begin{enumerate}
  \item [(a)] $W_{\psi,\varphi}$ is bounded on $\Bloch(\B_n)$ if and only if $\psi\in \Bloch(\B_n)$, and $\sigma_{\psi,\varphi}$ and $\tau_{\psi,\varphi}$ are finite.  \vskip4pt

\item [(b)] The bounded operator $W_{\psi,\varphi}$ is compact on $\Bloch(\B_n)$ if and only if
\begin{eqnarray}&\lim\limits_{\norm{\varphi(z)}\to 1}\mod{\psi(z)}T_\varphi(z) = 0, \hbox{ and }\nonumber\\
\label{compactcondition1} &\lim\limits_{\norm{\varphi(z)}\to 1}Q_\psi(z)\log\frac{1+\norm{\varphi(z)}}{1-\norm{\varphi(z)}} = 0.\end{eqnarray}
\end{enumerate}
\end{theorem}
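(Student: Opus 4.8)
The plan is to obtain both parts by feeding the general bounded-homogeneous-domain results of Section~4 together with the Zhou--Chen characterization (Theorem~\ref{thm:ZC}) and the ball-specific identity $\omega(\varphi(z)) = \frac{1}{2}\Log\frac{1+\norm{\varphi(z)}}{1-\norm{\varphi(z)}}$ from Remark~\ref{remark_section_4}. For part (a), the sufficiency of the conditions ($\psi\in\Bloch(\B_n)$ with $\sigma_{\psi,\varphi}$ and $\tau_{\psi,\varphi}$ finite) is immediate from Theorem~\ref{boundest}, which applies on every bounded homogeneous domain. For the necessity I would proceed as follows: if $W_{\psi,\varphi}$ is bounded, then $\psi = W_{\psi,\varphi}1\in\Bloch(\B_n)$, and by Theorem~\ref{thm:ZC} the quantity $\sup_z\mod{\psi(z)}B_\varphi(z)$ is finite; since $T_\varphi(z)\le B_\varphi(z)$ by (\ref{T-B inequality}), we get $\tau_{\psi,\varphi}=\sup_z\mod{\psi(z)}T_\varphi(z)\le\sup_z\mod{\psi(z)}B_\varphi(z)<\infty$. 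Finally, Theorem~\ref{theorem:tau sigma finite} guarantees that for a bounded $W_{\psi,\varphi}$ the finiteness of $\tau_{\psi,\varphi}$ and of $\sigma_{\psi,\varphi}$ are equivalent, so $\sigma_{\psi,\varphi}$ is finite as well.

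For the sufficiency direction of part (b), I would simply translate the two hypotheses into those of Theorem~\ref{bhdcomp}. On $\B_n$ the condition $\varphi(z)\to\partial\B_n$ coincides with $\norm{\varphi(z)}\to1$, and by Remark~\ref{remark_section_4}, $\omega(\varphi(z))Q_\psi(z)=\frac{1}{2}Q_\psi(z)\Log\frac{1+\norm{\varphi(z)}}{1-\norm{\varphi(z)}}$, so the two displayed limits in (b) are exactly the hypotheses (\ref{threecond}) of Theorem~\ref{bhdcomp}. Since $\psi\in\Bloch(\B_n)$ (the operator being bounded), that theorem yields compactness. The necessity of the first limit, $\lim_{\norm{\varphi(z)}\to1}\mod{\psi(z)}T_\varphi(z)=0$, follows at once from the compactness part of Theorem~\ref{thm:ZC}, which gives $\lim_{\norm{\varphi(z)}\to1}\mod{\psi(z)}B_\varphi(z)=0$, combined with $T_\varphi\le B_\varphi$.

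The second limit, $\lim_{\norm{\varphi(z)}\to1}Q_\psi(z)\Log\frac{1+\norm{\varphi(z)}}{1-\norm{\varphi(z)}}=0$, is where the real work lies, and a pointwise comparison with the Zhou--Chen quantity fails: Zhu's formula (\ref{Q_f_zhu}) only gives $(1-\norm{z}^2)\norm{\nabla\psi(z)}\le Q_\psi(z)\le(1-\norm{z}^2)^{1/2}\norm{\nabla\psi(z)}$, and the mismatched powers of $1-\norm{z}^2$ prevent deducing this limit from $\lim(1-\norm{z}^2)\norm{\nabla\psi(z)}\log\frac{2}{1-\norm{\varphi(z)}^2}=0$. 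Instead I would argue by contradiction through Lemma~\ref{suffcond}. Suppose the limit fails, so there are $\delta>0$ and $z_k$ with $a_k:=\varphi(z_k)$ satisfying $\norm{a_k}\to1$ and $Q_\psi(z_k)\Log\frac{1+\norm{a_k}}{1-\norm{a_k}}\ge\delta$. I would use the $\B_n$-analogues of the Ohno--Zhao test functions,
$$f_k(w)=\frac{\left(\Log\dfrac{2}{1-\ip{w,\conj{a_k}}}\right)^2}{\Log\dfrac{2}{1-\norm{a_k}^2}},\qquad \ip{w,\conj{a_k}}=\sum_{j=1}^n w_j\conj{(a_k)_j},$$
which satisfy $f_k(a_k)=\Log\frac{2}{1-\norm{a_k}^2}$ (comparable to $\Log\frac{1+\norm{a_k}}{1-\norm{a_k}}$ as $\norm{a_k}\to1$), have uniformly bounded Bloch seminorm, and converge to $0$ locally uniformly (for $\norm{w}\le r<1$ the argument $1-\ip{w,\conj{a_k}}$ stays bounded away from $0$ while the denominator tends to $\infty$); hence $\{f_k\}$ is admissible for Lemma~\ref{suffcond}. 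Evaluating the product-rule identity $\nabla(\psi(f_k\circ\varphi))=\psi\,\nabla(f_k\circ\varphi)+(f_k\circ\varphi)\nabla\psi$ in a direction $u_k$ realizing $Q_\psi(z_k)$ and using the reverse triangle inequality together with (\ref{testimate}) gives
$$Q_{\psi(f_k\circ\varphi)}(z_k)\ge\mod{f_k(a_k)}Q_\psi(z_k)-\mod{\psi(z_k)}T_\varphi(z_k)\beta_{f_k}\ge c\delta-C\mod{\psi(z_k)}T_\varphi(z_k).$$
Since the already-established first limit forces $\mod{\psi(z_k)}T_\varphi(z_k)\to0$, the right-hand side stays bounded below by a positive constant, so $\blochnorm{\psi(f_k\circ\varphi)}\ge Q_{\psi(f_k\circ\varphi)}(z_k)\not\to0$, contradicting Lemma~\ref{suffcond}.

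The main obstacle is precisely this necessity argument for the second limit: verifying that the test functions $f_k$ have uniformly bounded Bloch seminorm (a computation with Zhu's formula (\ref{Q_f_zhu}) paralleling the one-variable estimate) and arranging the reverse triangle inequality along a direction $u_k$ that realizes $Q_\psi(z_k)$, so that the composition term is captured by the controlled quantity $\mod{\psi(z_k)}T_\varphi(z_k)$ rather than by an unwanted supremum. Everything else reduces to invoking the Section~4 theorems and the identifications on $\B_n$ supplied by Remark~\ref{remark_section_4}.
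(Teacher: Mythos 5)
Your proposal is correct and shares the paper's overall architecture: part (a) and the sufficiency half of (b) are dispatched exactly as in the paper (Theorem \ref{boundest}, Theorem \ref{bhdcomp}, Theorem \ref{theorem:tau sigma finite}, and Theorem \ref{thm:ZC} combined with $T_\varphi \le B_\varphi$ from (\ref{T-B inequality})), the necessity of the first limit is the same one-line deduction from Theorem \ref{thm:ZC}, and the necessity of the second limit uses the same squared-logarithm test functions $f_k$ together with Lemma \ref{suffcond}; your caution about the mismatched powers of $1-\norm{z}^2$ in Zhu's formula is precisely the paper's Remark \ref{remarkcompact}. The genuine divergence is in how the Jacobian term in the reverse triangle inequality is controlled. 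The paper bounds $Q_{\psi(f_k\circ\varphi)}(z_k)$ below by the modulus of the difference of $Q_\psi(z_k)\log\frac{2}{1-\norm{\varphi(z_k)}^2}$ and $\frac{2\mod{\psi(z_k)}}{1-\norm{\varphi(z_k)}^2}\sup_{u}\,\mod{\ip{J\varphi(z_k)u,\varphi(z_k)}}\,H_{z_k}(u,\conj{u})^{-1/2}$, and then devotes the second half of the proof to showing this supremum term vanishes (its limit (\ref{toshow})) by introducing a \emph{second} test family $h_k(z)=\frac{1-\norm{\varphi(z_k)}^2}{1-\ip{z,\varphi(z_k)}}$, applying compactness a second time, and invoking $Q_\psi(z_k)\to 0$ (extracted from the finiteness of $\sigma_{\psi,\varphi}$), which is also used once more to pass from $\log\frac{2}{1-\norm{\varphi(z_k)}^2}$ to $\log\frac{1+\norm{\varphi(z_k)}}{1-\norm{\varphi(z_k)}}$. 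You instead observe that the offending term is exactly $\mod{\psi(z_k)}\,Q_{f_k\circ\varphi}(z_k)$, since $\nabla(f_k\circ\varphi)(z_k)u=\nabla f_k(\varphi(z_k))J\varphi(z_k)u$, hence at most $\beta_{f_k}\,\mod{\psi(z_k)}\,T_\varphi(z_k)$ by (\ref{testimate}), which tends to zero by the first limit already in hand; your contradiction setup with $f_k(a_k)=\log\frac{2}{1-\norm{a_k}^2}\ge\frac{1}{2}\log\frac{1+\norm{a_k}}{1-\norm{a_k}}$ (valid for $\norm{a_k}\ge\frac{1}{2}$) then closes the argument with one test family instead of two, one application of compactness instead of two, and no need for the intermediate fact $Q_\psi(z_k)\to 0$. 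This is a real streamlining; what the paper's longer route produces is the standalone limit (\ref{toshow}) for the radial-derivative expression, which your observation shows is in fact dominated by $\mod{\psi}T_\varphi$ all along. The one step you leave as a sketch, the uniform bound on $\beta_{f_k}$, is carried out in the paper via the factorization through $g_k(z)=\Log\frac{2}{1-\ip{z,\varphi(z_k)}}$ with $Q_{g_k}\le 2$, yielding $\blochnorm{f_k}\le \log 2+4\left(2+\frac{\pi}{2\log 2}\right)$, so your plan is sound there as well.
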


\begin{remark}\label{remarkcompact} At first glance it may seem evident that conditions $(\ref{compactcondition2})$ and $(\ref{compactcondition1})$ are equivalent due to the equivalence between the norm $(\ref{normball2})$ and the Bloch norm. However, we have not been able to prove directly that
$(\ref{compactcondition2})$ implies $(\ref{compactcondition1})$ and thus, the proof of $(\ref{compactcondition1})$ under the compactness assumption does not make use of $(\ref{compactcondition2})$. \end{remark}

\begin{proof} (a)
If $\psi \in \Bloch(\B_n)$ and $\sigma_{\psi,\varphi}$ and $\tau_{\psi,\varphi}$ are finite, then $W_{\psi,\varphi}$ is bounded by Theorem \ref{boundest}. Conversely, assume $W_{\psi,\varphi}$ is bounded. Then $\psi= W_{\psi,\varphi}1  \in \Bloch(\B_n)$ and by Theorem~\ref{thm:ZC}, $\displaystyle\sup_{z \in \B_n} \mod{\psi(z)}B_\varphi(z)$ is finite. From (\ref{T-B inequality}), we deduce $$\tau_{\psi,\varphi} = \sup_{z \in \B_n}\mod{\psi(z)}T_\varphi(z) \leq \sup_{z \in \B_n}\mod{\psi(z)}B_\varphi(z) < \infty.$$ On the other hand, using Theorem \ref{theorem:tau sigma finite}, we see that $\sigma_{\psi,\varphi}$ is also finite, completing the proof of (a).

To prove (b) observe that by Theorem \ref{bhdcomp}, if $$\lim_{\norm{\varphi(z)}\to 1}\;Q_\psi(z)\log\frac{1+\norm{\varphi(z)}}{1-\norm{\varphi(z)}} = \lim_{\norm{\varphi(z)}\to 1}\;\mod{\psi(z)}T_\varphi(z) = 0,$$ then $W_{\psi,\varphi}$ is compact.
Conversely, assume $W_{\psi,\varphi}$ is compact. Then, from Theorem \ref{thm:ZC} we get $$\lim_{\norm{\varphi(z)}\to 1} \mod{\psi(z)}T_\varphi(z) \leq \lim_{\norm{\varphi(z)}\to 1} \mod{\psi(z)}B_\varphi(z) = 0.$$ Furthermore, $W_{\psi,\varphi}$ is bounded and so $$\sup_{z\in \B_n}Q_\psi(z)\log\frac{1+\norm{\varphi(z)}}{1-\norm{\varphi(z)}}<\infty.$$ In particular,
\begin{eqnarray} \lim_{\norm{\varphi(z)}\to 1}Q_\psi(z)=0.\label{Qzerolimit}\end{eqnarray}
Let $\{z_k\}$ be a sequence in $\B_n$ such that $\norm{\varphi(z_k)} \to 1$ as $k \to \infty$.  For $z \in \B_n$ define
\begin{equation}\nonumber f_k(z) = \frac{\left(\Log\displaystyle\frac{2}{1-\ip{z,\varphi(z_k)}}\right)^2}{\log\displaystyle\frac{2}{1-\norm{\varphi(z_k)}^2}}.\end{equation} Then $\{f_k\}$ converges to 0 locally uniformly in $\B_n$.  We are now going to show that $\{f_k\}$ is bounded in $\Bloch(\B_n)$. For $z \in \B_n$, set $$g_k(z) = \Log\frac{2}{1-\ip{z,\varphi(z_k)}}.
$$ Then by (\ref{Q_f_zhu}) and the Cauchy-Schwarz inequality, we have
\begin{eqnarray}
Q_{g_k}(z)\nonumber &=& (1-\norm{z}^2)^{1/2}\frac{\left(\norm{\varphi(z_k)}^2 - \mod{\ip{z,\varphi(z_k)}}^2\right)^{1/2}}{\mod{1-\ip{z,\varphi(z_k)}}}\\
&\leq &\sqrt{2}\frac{(1-\norm{z})^{1/2}}{(1-\mod{\ip{z,\varphi(z_k)}})^{1/2}}(1+\mod{\ip{z,\varphi(z_k)}})^{1/2}\leq 2.\nonumber\end{eqnarray}
Next, observe that for $z \in \B_n$
$$\nabla(f_k)(z) = \frac{2\Log\frac{2}{1-\ip{z,\varphi(z_k)}}}{\log\frac{2}{1-\norm{\varphi(z_k)}^2}}\nabla(g_k)(z).$$ So for $u \in \C^n\setminus\{0\}$
$$\begin{aligned}
\frac{\mod{\nabla(f_k)(z)u}}{H_z(u,\conj{u})^{1/2}} &\leq \frac{2\left(\log\frac{2}{1-\mod{\ip{z,\varphi(z_k)}}} + \frac{\pi}{2}\right)}{\log\left(\frac{2}{1-\norm{\varphi(z_k)}^2}\right)}\frac{\mod{\nabla(g_k)(z)u}}{H_z(u,\conj{u})^{1/2}}\\
&\leq \frac{2\left(\log\left(\frac{4}{1-\norm{\varphi(z_k)}^2}\right) + \frac{\pi}{2}\right)}{\log\frac{2}{1-\norm{\varphi(z_k)}^2}}Q_{g_k}(z)\leq 4\left(2+\frac{\pi}{2\log 2}\right).
\end{aligned}$$  Hence $\blochnorm{f_k}$ is bounded above by $\log 2 + 4\left(2+\frac{\pi}{2\log 2}\right)$.
By the compactness of $W_{\psi,\varphi}$, $\blochnorm{\psi(f_k\circ\varphi)} \to 0$ as $k \to \infty$.  Moreover $$\nabla (f_k)(\varphi(z_k)) = \frac{2\conj{\varphi(z_k)}}{1-\norm{\varphi(z_k)}^2},$$ so, for $u \in \C^n\setminus\{0\}$, we have
$$\mod{\nabla(f_k)(\varphi(z_k)) J\varphi(z_k)u} =
  \frac{2\mod{\ip{J\varphi(z_k)u, \varphi(z_k)}}}{1-\norm{\varphi(z_k)}^2}.$$
Hence
{{\begin{eqnarray}
\blochnorm{\psi(f_k\circ\varphi)} &\geq& \sup_{z \in \B_n} Q_{\psi(f_k\circ\varphi)}(z) \geq Q_{\psi(f_k\circ\varphi)}(z_k)\notag\\
&\geq& \mod{Q_\psi(z_k)f_k(\varphi(z_k)) - \mod{\psi(z_k)}\sup_{u \in \C^n\setminus\{0\}}\frac{\mod{\nabla(f_k)(\varphi(z_k))J\varphi(z_k)u}}{H_z(u,\conj{u})^{1/2}}}\notag\\
&=& {\text{\Huge{$|$}}}Q_\psi(z_k)\log\frac{2}{1-\norm{\varphi(z_k)}^2}\notag\\
&& \qquad -\, \frac{2\mod{\psi(z_k)}}{1-\norm{\varphi(z_k)}^2}\sup_{u \in \C^n\setminus\{0\}}\frac{\mod{\ip{J\varphi(z_k)u,\varphi(z_k)}}}{H_z(u,\conj{u})^{1/2}}{\text{\Huge{$|$}}}.\notag
\end{eqnarray}}}

\noindent We now show that \begin{eqnarray}\lim_{k \to \infty} \frac{\mod{\psi(z_k)}}{1-\norm{\varphi(z_k)}^2}\sup_{u \in \C^n\setminus\{0\}}\frac{\mod{\ip{J\varphi(z_k)u,\varphi(z_k)}}}{H_z(u,\conj{u})^{1/2}} = 0.\label{toshow}\end{eqnarray}  Once this is proved, it will follow that $$\lim_{k \to \infty} Q_\psi(z_k)\log\frac{2}{1-\norm{\varphi(z_k)}^2} = 0$$ since $\blochnorm{\psi(f_k\circ\varphi)} \to 0$ as $k \to \infty$.  Noting that
\begin{eqnarray} Q_\psi(z_k)\log\frac{2}{1-\norm{\varphi(z_k)}^2}&\geq& Q_\psi(z_k)\log\frac1{2}\,\frac{1+\norm{\varphi(z_k)}}{1-\norm{\varphi(z_k)}}\nonumber\\
&=&Q_\psi(z_k)\log\frac{1+\norm{\varphi(z_k)}}{1-\norm{\varphi(z_k)}}-Q_\psi(z_k)\log 2\nonumber\end{eqnarray}
and that by (\ref{Qzerolimit}), $\lim\limits_{k\to \infty}Q_\psi(z_k)=0$, we obtain that the limit of the first term of the above difference also goes to 0 as $k\to\infty$, and hence
$$\lim_{\norm{\varphi(z)} \to 1} Q_\psi(z)\log\frac{1+\norm{\varphi(z)}}{1-\norm{\varphi(z)}} = 0.$$

Let us now proceed with the proof of (\ref{toshow}). For $k \in \N$ and $z \in \B_n$, let $$h_k(z) = \frac{1-\norm{\varphi(z_k)}^2}{1-\ip{z,\varphi(z_k)}}.$$  Then $h_k \to 0$ uniformly on compact subsets of $\B_n$, and for $j = 1, \dots, n$ and $z\in\B_n$
$$\frac{\partial h_k}{\partial z_j}(z) = \frac{(1-\norm{\varphi(z_k)}^2)\conj{\varphi_j(z_k)}}{(1-\ip{z,\varphi(z_k)})^2}.$$  Thus by (\ref{Q_f_zhu}), we obtain
$$\begin{aligned}
Q_{h_k}(z) &= \frac{(1-\norm{z}^2)^{1/2}(1-\norm{\varphi(z_k)}^2)}{\mod{1-\ip{z,\varphi(z_k)}}^2}\left(\norm{\varphi(z_k)}^2 - \mod{\ip{z,\varphi(z_k)}}^2\right)^{1/2}\\
&\leq \frac{\sqrt{2}(1-\norm{z}^2)^{1/2}(1-\norm{\varphi(z_k)}^2)}{(1-\mod{\ip{z,\varphi(z_k)}})^{3/2}}\\
&\leq \frac{\sqrt{2}(1-\norm{z}^2)^{1/2}(1-\norm{\varphi(z_k)}^2)}{(1-\norm{z})^{1/2}(1-\norm{\varphi(z_k)})} \leq 4.
\end{aligned}$$  So $h_k \in \Bloch(\B_n)$ and $\blochnorm{h_k} \leq 5$.  Since $W_{\psi,\varphi}$ is compact, $\blochnorm{\psi(h_k\circ\varphi)} \to 0$ as $k \to \infty$.  Moreover
$$\begin{aligned}
\blochnorm{\psi(h_k\circ\varphi)} &\geq Q_{\psi(h_k\circ\varphi)}(z_k)\\
&\geq \mod{Q_\psi(z_k) - \mod{\psi(z_k)}\sup_{u \in \C^n\setminus\{0\}}\frac{\mod{\nabla(h_k)(\varphi(z_k))J\varphi(z_k)u}}{H_z(u,\conj{u})^{1/2}}}\\
&= \mod{Q_\psi(z_k) - \frac{\mod{\psi(z_k)}}{1-\norm{\varphi(z_k)}^2}\sup_{u \in \C^n\setminus\{0\}}\frac{\mod{\ip{J\varphi(z_k)u,\varphi(z_k)}}}{H_z(u,\conj{u})^{1/2}}}.
\end{aligned}$$  Since $\lim\limits_{k \to \infty} Q_\psi(z_k) = 0$, it follows that $$\lim_{k \to \infty} \frac{\mod{\psi(z_k)}}{1-\norm{\varphi(z_k)}^2}\sup_{u \in \C^n\setminus\{0\}}\frac{\mod{\ip{J\varphi(z_k)u,\varphi(z_k)}}}{H_z(u,\conj{u})^{1/2}} = 0.$$
The proof is now complete.
\end{proof}

	Next, we give an example of a bounded weighted composition operator on the unit ball whose associated component multiplication operator is unbounded and an example of a compact operator on $\B_n$ whose associated component operators are both not compact.
\smallskip

\noindent{\it{Examples}} (a) Let $\lambda \in \partial\B_n$ and define the functions $\psi(z) = \frac{1}{2}\hbox{Log}(1-\ip{z,\lambda})$ and $\varphi(z) = \frac{1}{2}(\lambda - z)$ for $z \in \B_n$.  The associated multiplication operator $M_\psi$ is not bounded on $\Bloch(\B_n)$ since $\psi \not\in H^\infty(\B_n)$. On the other hand, it is straightforward to verify that $\sup_{z \in \B_n}\mod{\psi(z)}B_\varphi(z) < \infty$ and $$
\sup_{z \in \B_n}(1-\norm{z}^2)\norm{\nabla\psi(z)}\log\frac{1}{1-\norm{\varphi(z)}^2} < \infty.$$  Therefore, $W_{\psi,\varphi}$ is bounded on $\Bloch(\B_n)$.
\smallskip

\noindent (b) Let $\psi(z) = 1-z$ and $\varphi(z) = \frac{1+z}{2}$ for $z \in \B_n$.  The multiplication operator $M_\psi$ is not compact on $\Bloch(\B_n)$, since $\psi$ is not identically zero. Moreover, the composition operator $C_\varphi$ is not compact on $\Bloch(\B_n)$ \cite{ShiLuo:00}, since
$$\begin{aligned}\frac{H_{\varphi(z)}(J\varphi(z)u,\conj{J\varphi(z)u})}{H_{z}(u,\conj{u})} &= \frac{1}{4}\frac{(1-\norm{\varphi(z)}^2)\norm{u}^2 + \mod{\ip{\varphi(z),u}}^2}{(1-\norm{z}^2)\norm{u}^2 + \mod{\ip{z,u}}^2}\frac{(1-\norm{z}^2)^2}{(1-\norm{\varphi(z)}^2)^2},\end{aligned}$$ which does not go to 0 if $z \to 1$ along the real axis in the first coordinate and $u = (1,0,\dots,0)$. Observe that
$$\lim_{\norm{\varphi(z)}\to 1}\psi(z)=\lim_{z\to 1}\psi(z)=0$$ and $B_\varphi(z)$ is bounded above by a constant independent of $\varphi$, so $$\lim_{\norm{\varphi(z)}\to 1} \mod{\psi(z)}B_\varphi(z) = 0.$$ Moreover, $$\lim_{\norm{\varphi(z)}\to 1} (1-\norm{z}^2)\log\frac{2}{1-\left\|\frac{1+z}{2}\right\|^2} = 0.
$$ Therefore $W_{\psi,\varphi}$ is compact on $\Bloch(\B_n)$.

%-------------------------------------------------------------------------------------------------
% Special Case: The Unit Polydisk
%-------------------------------------------------------------------------------------------------
\section{Special Case: The Unit Polydisk}
\begin{theorem}\label{theorem:wco bounded D^n} Let $\psi$ be a holomorphic function on $\D^n$ and $\varphi$ a holomorphic self-map of $\D^n$.  Then $W_{\psi,\varphi}$ is bounded on $\Bloch(\D^n)$ if and only if $\psi\in\Bloch(\D^n)$, and $\sigma_{\psi,\varphi}$ and $\tau_{\psi,\varphi}$ are finite. Furthermore, the bounded operator $W_{\psi,\varphi}$ is compact on $\Bloch(\D^n)$ if and only if \begin{eqnarray}\lim_{\varphi(z)\to \partial \D^n}\;\omega(\varphi(z))Q_\psi(z) = \lim_{\varphi(z)\to \partial \D^n}\;\mod{\psi(z)}T_\varphi(z) = 0.\label{sigmatauzeropoly}\end{eqnarray}
\end{theorem}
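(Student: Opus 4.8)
The plan is to transcribe the architecture of the unit-ball case (Theorem~\ref{theorem:wco bounded/compact B_n}), replacing Zhu's formula (\ref{Q_f_zhu}) by its polydisk analogue. Because the Bergman metric of $\D^n$ is the direct sum of the one-variable metrics, $H_z(u,\conj u)=\sum_{j=1}^n\mod{u_j}^2/(1-\mod{z_j}^2)^2$, and Cauchy--Schwarz gives $Q_f(z)=\big(\sum_{j=1}^n(1-\mod{z_j}^2)^2\mod{\partial f/\partial z_j(z)}^2\big)^{1/2}$. Two consequences organize the proof. First, $H_z(e_k,\conj{e_k})^{1/2}=(1-\mod{z_k}^2)^{-1}$, and $B_\varphi(z)$ is the operator norm of the matrix whose $(i,k)$-entry has modulus $A_{ik}(z):=(1-\mod{z_k}^2)\mod{\partial\varphi_i/\partial z_k(z)}/(1-\mod{\varphi_i(z)}^2)$; hence $\max_{i,k}A_{ik}(z)\le B_\varphi(z)\le n\max_{i,k}A_{ik}(z)$, while the single-variable automorphisms realize every direction, so that $T_\varphi(z)=B_\varphi(z)$. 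Second, testing $\omega$ against single-variable extremals and combining with $\omega(w)\le\rho(w,0)$ yields $\max_j d(w_j)\le\omega(w)\le\sqrt n\,\max_j d(w_j)$, where $d(\zeta)=\tfrac12\log\frac{1+\mod{\zeta}}{1-\mod{\zeta}}$. The two sufficiency implications are then immediate: if $\psi\in\Bloch(\D^n)$ with $\sigma_{\psi,\varphi},\tau_{\psi,\varphi}$ finite, Theorem~\ref{boundest} gives boundedness, and if in addition (\ref{sigmatauzeropoly}) holds, Theorem~\ref{bhdcomp} gives compactness.

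For the necessity of the boundedness criterion, a bounded $W_{\psi,\varphi}$ forces $\psi=W_{\psi,\varphi}1\in\Bloch(\D^n)$, and by Theorem~\ref{theorem:tau sigma finite} it suffices to show $\tau_{\psi,\varphi}<\infty$. Fix $z$ and a pair $(i,k)$ and set $f(y)=g(y_i)$ with $g(\zeta)=(\varphi_i(z)-\zeta)/(1-\conj{\varphi_i(z)}\zeta)$. Then $\blochnorm{f}\le 2$, $f(\varphi(z))=g(\varphi_i(z))=0$, and in the direction $e_k$ one has $Q_{f\circ\varphi}(z)\ge A_{ik}(z)$. By the product rule, the vanishing $f(\varphi(z))=0$ collapses $\nabla(\psi(f\circ\varphi))(z)$ to $\psi(z)\nabla(f\circ\varphi)(z)$, so $\mod{\psi(z)}A_{ik}(z)\le\mod{\psi(z)}Q_{f\circ\varphi}(z)=Q_{\psi(f\circ\varphi)}(z)\le 2\norm{W_{\psi,\varphi}}$. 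Taking suprema and using $T_\varphi\le B_\varphi\le n\max_{i,k}A_{ik}$ from (\ref{T-B inequality}) gives $\tau_{\psi,\varphi}\le 2n\norm{W_{\psi,\varphi}}<\infty$, whence $\sigma_{\psi,\varphi}<\infty$ by Theorem~\ref{theorem:tau sigma finite}.

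The necessity for compactness is the heart of the matter; here I would use Lemma~\ref{suffcond}, noting first that boundedness gives $\sigma_{\psi,\varphi}<\infty$, so $Q_\psi(z)\le\sigma_{\psi,\varphi}/\omega(\varphi(z))\to 0$ as $\varphi(z)\to\partial\D^n$ (since $\omega(\varphi(z))\ge\max_j d(\varphi_j(z))\to\infty$). To prove $\mod{\psi(z)}T_\varphi(z)\to0$ I argue by contradiction along a sequence $z_m$ with $\varphi(z_m)\to\partial\D^n$, passing to a subsequence so that each $\mod{\varphi_i(z_m)}$ converges and a fixed entry $A_{i_0k_0}$ is maximal. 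If $\mod{\varphi_{i_0}(z_m)}\to1$, the single-variable function $f_m(y)=-(1-\mod{\varphi_{i_0}(z_m)}^2)y_{i_0}/(1-\conj{\varphi_{i_0}(z_m)}y_{i_0})$ is bounded in $\Bloch(\D^n)$, tends to $0$ locally uniformly, has $\mod{f_m(\varphi(z_m))}\le1$, and detects $A_{i_0k_0}(z_m)$ in the direction $e_{k_0}$; Lemma~\ref{suffcond} and $Q_\psi(z_m)\to0$ then give $\mod{\psi(z_m)}A_{i_0k_0}(z_m)\to0$, a contradiction. If instead $\mod{\varphi_{i_0}(z_m)}\to a<1$, I multiply $f_m$ by a vanishing bump $b_m(y_{i_1})=(1-\mod{\varphi_{i_1}(z_m)}^2)/(1-\conj{\varphi_{i_1}(z_m)}y_{i_1})$ in a coordinate $i_1$ with $\mod{\varphi_{i_1}(z_m)}\to1$; the product still tends to $0$ locally uniformly, and the cross term it produces is dominated by $\mod{\psi(z_m)}A_{i_1k_0}(z_m)$, which vanishes by the first case, again forcing $\mod{\psi(z_m)}A_{i_0k_0}(z_m)\to0$. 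For the remaining condition $\omega(\varphi(z))Q_\psi(z)\to0$, the comparison $\omega\le\sqrt n\max_j d$ lets me pass to a single coordinate $j_0$ with $\mod{\varphi_{j_0}(z_m)}\to1$, and I then repeat the unit-ball argument almost verbatim, testing against $f_m(y)=\big(\Log\frac{2}{1-\conj{\varphi_{j_0}(z_m)}y_{j_0}}\big)^2/\log\frac{2}{1-\mod{\varphi_{j_0}(z_m)}^2}$ and controlling the residual gradient term with the bumps $h_m(y)=(1-\mod{\varphi_{j_0}(z_m)}^2)/(1-\conj{\varphi_{j_0}(z_m)}y_{j_0})$.

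The main obstacle is precisely the $\mod{\psi}T_\varphi=\mod{\psi}B_\varphi$ condition in the compactness necessity: unlike $\omega$, which is comparable to its single largest coordinate, $B_\varphi$ genuinely couples the coordinates, so one must control an entry $A_{i_0k_0}$ whose output coordinate $i_0$ stays away from $\partial\D$ while $\varphi(z_m)$ reaches $\partial\D^n$ through a different coordinate. A single-variable test function in coordinate $i_0$ then fails to converge to $0$ locally uniformly, and the device of multiplying by a vanishing bump in a boundary coordinate and bootstrapping from the boundary-coordinate estimates, while keeping the resulting cross terms under control, is the delicate point. (This condition is exactly the polydisk analogue of Theorem~\ref{thm:ZC}; invoking the corresponding result of \cite{ZhouChen-I:05} would let one replace both the estimate $\tau_{\psi,\varphi}<\infty$ and the limit $\mod{\psi}T_\varphi\to0$ by a direct citation, exactly as Theorem~\ref{thm:ZC} was used for the unit ball.)
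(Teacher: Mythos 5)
Your proposal is correct, but it takes a genuinely different route from the paper's. The paper disposes of both necessity directions by citation: it invokes Zhou and Chen's polydisk characterization (Theorem~\ref{thm:ZC-1}) together with the comparison inequalities of Lemma~\ref{greek}, which show that the Zhou--Chen quantities dominate $\omega(\varphi(z))Q_\psi(z)$ and $\mod{\psi(z)}T_\varphi(z)$ pointwise, and then transfers finiteness from $\tau_{\psi,\varphi}$ to $\sigma_{\psi,\varphi}$ via Theorem~\ref{theorem:tau sigma finite}; the two sufficiency directions are Theorems~\ref{boundest} and~\ref{bhdcomp}, exactly as in your first paragraph. You instead re-prove the necessity from scratch by transplanting the unit-ball argument: the entry-wise automorphism tests, using that $f(\varphi(z))=0$ kills the cross term in the product rule, give the quantitative bound $\tau_{\psi,\varphi}\le 2n\norm{W_{\psi,\varphi}}$; and for compactness you correctly isolate and solve the one genuinely new obstacle --- an entry $A_{i_0k_0}$ whose output coordinate stays interior while $\varphi(z_m)$ reaches $\partial\D^n$ through a different coordinate --- by multiplying the test function by a vanishing bump in a boundary coordinate and absorbing the resulting cross term into the already-established boundary-coordinate estimates. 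That bump device appears nowhere in the paper, because the citation makes it unnecessary; your closing parenthesis, suggesting one could instead quote \cite{ZhouChen-I:05}, describes literally the proof the paper gives. What each approach buys: yours is self-contained (Theorem~\ref{thm:ZC-1} rests on a preprint), yields an explicit norm bound, and proves the limits in (\ref{sigmatauzeropoly}) directly in terms of $\omega(\varphi(z))Q_\psi(z)$ and $\mod{\psi(z)}T_\varphi(z)$ rather than through equivalent-norm quantities --- a passage the paper itself flags as delicate for the ball in Remark~\ref{remarkcompact} --- while the paper's reduction is far shorter. Two small repairs: your one-line justification of $T_\varphi(z)=B_\varphi(z)$ is incomplete, since a single-variable automorphism only realizes coordinate directions (to realize an arbitrary direction take $f(y)=\sum_{j=1}^n\lambda_j\sigma_j(y_j)$ with $\sigma_j$ disk automorphisms and $\sum_{j=1}^n\mod{\lambda_j}^2=1$), though your argument never actually uses the equality, only $T_\varphi\le B_\varphi\le n\max_{i,k}A_{ik}$ and the entry-wise lower bounds; and your Case 1 should be recorded for every pair $(i,k)$ with $\mod{\varphi_i(z_m)}\to1$, not just the maximal entry, since that is what the bootstrap $\mod{\psi(z_m)}A_{i_1k_0}(z_m)\to0$ in Case 2 requires --- your test functions do deliver this.
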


To prove this result, we will show that the conditions for the boundedness and compactness of $W_{\psi,\varphi}$ are equivalent to the conditions proven by Zhou and Chen in the following theorem.  Their results were obtained by considering on the Bloch space of $\D^n$ the norm $$\norm{f}_* = \mod{f(0)} + \sup_{z \in \D^n} \sum_{j=1}^n (1-\mod{z_j}^2)\mod{\frac{\partial f}{\partial z_j}(z)}.$$  In \cite{CohenColonna:08}, it was shown that for $f \in \Bloch(\D^n)$ and $z \in \D^n$, $$Q_f(z) = \norm{\left((1-\mod{z_1}^2)\frac{\partial f}{\partial z_1}(z), \cdots, (1-\mod{z_n}^2)\frac{\partial f}{\partial z_n}(z)\right)}.$$  Thus $\norm{\cdot}_*$ is equivalent to the Bloch norm since
\begin{equation}\label{equivnormpoly}\frac{1}{n}\sum_{j=1}^n (1-\mod{z_j}^2)\mod{\frac{\partial f}{\partial z_j}(z)} \leq Q_f(z) \leq \sum_{j=1}^n (1-\mod{z_j}^2)\mod{\frac{\partial f}{\partial z_j}(z)},\end{equation} for all $z \in \D^n$.

\begin{theorem}[\cite{ZhouChen-I:05}, Theorems 1 and 2]\label{thm:ZC-1} Let $\psi$ be a holomorphic function on $\D^n$ and $\varphi$ a holomorphic self-map of $\D^n$.  Then $W_{\psi,\varphi}$ is bounded on $\Bloch(\D^n)$ if and only if $$\sup_{z \in \D^n}\;\sum_{j,k=1}^n(1-\mod{z_j}^2)\mod{\frac{\partial \psi}{\partial z_j}(z)}\log\frac{4}{1-\mod{\varphi_k(z)}^2} < \infty$$ and $$\sup_{z \in \D^n}\;\mod{\psi(z)}\sum_{j,k = 1}^n\mod{\frac{\partial\varphi_k}{\partial z_j}(z)}\frac{1-\mod{z_j}^2}{1-\mod{\varphi_k(z)}^2} < \infty.$$  Furthermore, $W_{\psi,\varphi}$ is compact on $\Bloch(\D^n)$ if and only if $W_{\psi,\varphi}$ is bounded and $$\lim_{\varphi(z) \to \partial\D^n} \sum_{j,k= 1}^n(1-\mod{z_j}^2)\mod{\frac{\partial \psi}{\partial z_j}(z)}\log\frac{4}{1-\mod{\varphi_k(z)}^2} = 0$$ and $$\lim_{\varphi(z) \to \partial\D^n}\mod{\psi(z)}\sum_{j,k = 1}^n\mod{\frac{\partial\varphi_k}{\partial z_j}(z)}\frac{1-\mod{z_j}^2}{1-\mod{\varphi_k(z)}^2} = 0.$$
\end{theorem}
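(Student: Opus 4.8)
The plan is to work directly with the seminorm attached to $\norm{\cdot}_*$, namely $\beta^*_f = \sup_{z\in\D^n}\sum_{j=1}^n(1-\mod{z_j}^2)\mod{\frac{\partial f}{\partial z_j}(z)}$, which by \eqref{equivnormpoly} is comparable to $\beta_f$. Writing $w=\varphi(z)$ and applying the product and chain rules, $\frac{\partial}{\partial z_j}[\psi(f\circ\varphi)] = \frac{\partial\psi}{\partial z_j}\,(f\circ\varphi) + \psi\sum_k\big(\frac{\partial f}{\partial w_k}\circ\varphi\big)\frac{\partial\varphi_k}{\partial z_j}$, so $\beta^*_{W_{\psi,\varphi}f}$ splits into a \emph{multiplication term} carrying $\frac{\partial\psi}{\partial z_j}$ and $\mod{f(\varphi(z))}$ and a \emph{composition term} carrying $\psi$ and $\frac{\partial\varphi_k}{\partial z_j}$. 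The two elementary estimates that drive everything are the pointwise bound $\mod{f(w)}\le\mod{f(0)}+\beta^*_f\sum_{k}\log\frac{4}{1-\mod{w_k}^2}$, obtained by integrating $\frac{\partial f}{\partial z_k}$ along the coordinate path $0\to(w_1,0,\dots)\to\cdots\to w$ and using that each weighted partial is $\le\beta^*_f$, together with $\mod{\frac{\partial f}{\partial w_k}(w)}\le\beta^*_f/(1-\mod{w_k}^2)$. Substituting these makes the multiplication term $\le\beta^*_f$ times the first supremum in the statement and the composition term $\le\beta^*_f$ times the second; since $\mod{(W_{\psi,\varphi}f)(0)}$ is controlled the same way, finiteness of the two suprema yields boundedness (and, because each logarithm exceeds $\log4$, $\psi\in\Bloch(\D^n)$ is automatic).

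For the converse I would test against explicit, uniformly norm-bounded families. To extract the composition (second) condition at a point $z_0$, with $w=\varphi(z_0)$, use for each $k$ the single-variable automorphism $f(\zeta)=\frac{w_k-\zeta_k}{1-\conj{w_k}\zeta_k}$: it has $\norm{f}_*\le2$, \emph{vanishes at} $w$ (so the multiplication term disappears entirely at $z_0$), and has $(1-\mod{w_k}^2)\mod{\frac{\partial f}{\partial\zeta_k}(w)}=1$, so evaluating $\beta^*_{W_{\psi,\varphi}f}$ at $z_0$ isolates the $k$-th slice of the composition sum; summing over $k$ and taking the supremum bounds the second quantity by $2n\norm{W_{\psi,\varphi}}$. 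To extract the multiplication (first) condition, use the Ohno--Zhao squared-logarithm functions $f(\zeta)=\frac{(\log\frac{2}{1-\conj{w_k}\zeta_k})^2}{\log\frac{2}{1-\mod{w_k}^2}}$, which are uniformly Bloch-bounded, take the large value $\log\frac{2}{1-\mod{w_k}^2}$ at $w$, yet have bounded weighted derivative there; evaluating $\beta^*_{W_{\psi,\varphi}f}$ at $z_0$ and subtracting the composition contribution (now known finite) bounds the first quantity. This ordering---composition first, multiplication second---is essential, since the squared-log test also activates the composition term.

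For compactness I would use the sequential criterion of Lemma~\ref{suffcond}, which applies verbatim to $\norm{\cdot}_*$ by the equivalence of norms. Sufficiency is a rerun of the boundedness estimate: given a $\norm{\cdot}_*$-bounded sequence $f_m\to0$ locally uniformly, split $\D^n$ into the region where $\varphi(z)$ is near $\partial\D^n$---where the two limit hypotheses force the integrand below any prescribed $\epsilon$---and the region where $\varphi(z)$ stays in a fixed compact set---where local uniform convergence of $f_m$ and, by Cauchy estimates, of its first derivatives, against the boundedness bound on the $\varphi$-factor, drives the integrand to $0$. This also matches Theorem~\ref{bhdcomp} specialized to $D=\D^n$.

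The genuinely delicate part is \emph{necessity} of the compactness conditions, and it is here that the polydisk differs essentially from the ball. Along a sequence $z_m$ with $\varphi(z_m)\to\partial\D^n$ only $\max_k\mod{\varphi_k(z_m)}\to1$, so some output coordinates approach the boundary (call these \emph{active}) while others may stay interior; the automorphism and squared-log test functions converge to $0$ locally uniformly only in their active coordinate, so they cannot be fed into Lemma~\ref{suffcond} as they stand. My remedy is to pass to a subsequence along which the set of active coordinates is constant and nonempty, fix one active index $k_0$, and multiply each test function by the vanishing factor $\frac{1-\mod{w_{k_0}}^2}{1-\conj{w_{k_0}}\zeta_{k_0}}$ (with $w=\varphi(z_m)$); this factor tends to $0$ locally uniformly, equals $1$ at $w$, and---because the companion factor either vanishes at $w$ or has bounded weighted derivative there---does not corrupt the relevant derivative data at $z_m$. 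With these genuinely null-convergent test functions, compactness forces in turn: each composition slice $\to0$; then $\sum_j(1-\mod{z_{m,j}}^2)\mod{\frac{\partial\psi}{\partial z_j}(z_m)}\to0$ (using the pure vanishing factor and subtracting the composition slices just controlled); and finally the multiplication sum $\to0$, its inactive coordinates handled by this last limit together with bounded logarithms, its active coordinates by the squared-log functions. A standard ``every subsequence has a null subsubsequence'' argument then removes the subsequence passage. I expect the construction and bookkeeping of these active-coordinate-corrected test functions to be the main obstacle.
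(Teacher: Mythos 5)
You should first know that the paper does \emph{not} prove this statement at all: it is imported verbatim from Zhou and Chen \cite{ZhouChen-I:05}, and the paper's own work in Section~6 consists only of Lemma~\ref{greek} (comparing the Zhou--Chen quantities with $\sigma_{\psi,\varphi}$, $\tau_{\psi,\varphi}$) and the deduction of Theorem~\ref{theorem:wco bounded D^n}. So your proposal is a reconstruction of the Zhou--Chen argument, and it follows essentially the same template the paper itself uses for the unit ball (Theorem~\ref{theorem:wco bounded/compact B_n}): product-rule splitting of $\partial_j[\psi(f\circ\varphi)]$, the coordinatewise growth estimate, disk automorphisms and squared-logarithm test functions, the sequential criterion of Lemma~\ref{suffcond}, and --- crucially --- your corrective factor $\frac{1-\mod{w_{k_0}}^2}{1-\conj{w_{k_0}}\zeta_{k_0}}$, which is exactly the paper's $h_k$ from the ball proof transplanted to a distinguished active coordinate. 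Your insistence on ordering (composition slices first, multiplication second) is correct and necessary, and your identification of the genuinely polydisk-specific difficulty --- that $\varphi(z)\to\partial\D^n$ only forces $\max_k\mod{\varphi_k(z)}\to 1$, so the test families must be corrected coordinate by coordinate after stabilizing the active set along a subsequence --- is the real content beyond the ball case, and your final itemized bookkeeping executes it correctly.

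One imprecision deserves flagging, precisely in the part you call the main obstacle. The blanket prescription ``multiply each test function by the vanishing factor'' fails if applied to the squared-logarithm function in an \emph{active} coordinate $k$: there $\sup\mod{g_m}\sim\log\frac{2}{1-\mod{w_{m,k}}^2}\to\infty$ while $\beta_{h_m}$ is merely bounded, so the product bound $\beta_{g_mh_m}\leq\norm{h_m}_\infty\beta_{g_m}+\norm{g_m}_\infty\beta_{h_m}$ is no longer uniform, and the cross term $g_m(w)\,h_m'(w_{k_0})$ contributes a quantity of size $\left(\log\frac{2}{1-\mod{w_{m,k}}^2}\right)\times(\text{composition slice }k_0)$, which is indeterminate even after the slices are known to vanish. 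Your stated justification (``the companion factor either vanishes at $w$ or has bounded weighted derivative there'') overlooks this value-times-$h'$ cross term: for the squared-log it is the boundedness of the \emph{value} at $w$, not of the weighted derivative, that matters, and in an active coordinate that value blows up. Fortunately your own step-by-step plan sidesteps the problem: for active $k$ the \emph{uncorrected} squared-log is already null locally uniformly (its normalizing denominator tends to infinity) and has weighted derivative at $w$ bounded by $2$, thanks to the cancellation in $2\Log F\cdot F'/F$ at the point where $\Log F$ equals the normalizer --- the same cancellation the paper exploits in the ball proof --- so no correction is needed there; the $h$-factor should be reserved for the automorphisms (in all coordinates) and for nothing else, with the inactive multiplication slices handled, as you in fact propose, by bounded logarithms times $\sum_j(1-\mod{z_{m,j}}^2)\mod{\frac{\partial\psi}{\partial z_j}(z_m)}\to 0$. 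With that single correction made explicit, the proposal is sound.
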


\begin{lemma}\label{greek} Let $\psi$ be a holomorphic function on $\D^n$ and $\varphi$ a holomorphic self-map of $\D^n$. Then, for $z \in \D^n$, the following inequalities hold:
\begin{enumerate}
  \item[(a)] $\rho(0,z)\leq \sum_{j=1}^n\log\frac4{1-\mod{z_j}^2}$;
  \item[(b)] $\sigma_{\psi,\varphi}(z)\leq \left(\sum_{j=1}^n (1-\mod{z_j}^2)\mod{\frac{\partial \psi}{\partial z_j}(z)}\right)\sum_{k=1}^n\log\frac4{1-\mod{\varphi_k(z)}^2}$;
  \item[(c)] $T_\varphi(z)\leq \sum_{j,k=1}^n\left|\frac{\partial \varphi_k}{\partial z_j}(z)\right|\frac{1-\mod{z_j}^2}{1-\mod{\varphi_k(z)}^2};$
  \item[(d)] $\tau_{\psi,\varphi}(z)\le \mod{\psi(z)}\sum_{j,k=1}^n\left|\frac{\partial \varphi_k}{\partial z_j}(z)\right|\frac{1-\mod{z_j}^2}{1-\mod{\varphi_k(z)}^2}$;
\end{enumerate}
\end{lemma}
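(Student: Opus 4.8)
The plan is to organize the four bounds as two linked pairs: part~(a) drives part~(b), and part~(c) drives part~(d). The workhorse throughout is the coordinatewise description of the Bloch seminorm on the polydisk recorded in~(\ref{equivnormpoly}). From it I extract two elementary facts used repeatedly: the $\ell^2\le\ell^1$ estimate $Q_f(z)\le\sum_{j=1}^n(1-\mod{z_j}^2)\mod{\frac{\partial f}{\partial z_j}(z)}$, and, since the Euclidean norm dominates each of its components, the pointwise derivative bound $(1-\mod{w_k}^2)\mod{\frac{\partial f}{\partial w_k}(w)}\le Q_f(w)$ for every coordinate $k$.

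For part~(a) I would first use that the Bergman metric of $\D^n$ is block diagonal, so each coordinate slice (all but one coordinate frozen) is totally geodesic and isometric to $\D$ with its Bergman metric. Moving from $0$ to $z$ one coordinate at a time and invoking the triangle inequality then gives $\rho(0,z)\le\sum_{j=1}^n\rho_{\D}(0,z_j)$, where $\rho_{\D}(0,z_j)=\frac12\log\frac{1+\mod{z_j}}{1-\mod{z_j}}$. It remains to check the one-variable inequality $\frac12\log\frac{1+t}{1-t}\le\log\frac{4}{1-t^2}$ on $[0,1)$, which follows from the chain $\frac12\log\frac{1+t}{1-t}\le\log\frac{2}{1-t}\le\log\frac{4}{1-t^2}$: the first step reduces to $\frac12\log(1-t^2)\le\log2$, which holds since $1-t^2\le1$, and the second to $1+t\le2$. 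Summing over $j$ yields~(a).

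Parts~(b)--(d) are then short. For~(b), write $\sigma_{\psi,\varphi}(z)=\omega(\varphi(z))Q_\psi(z)$, bound $\omega(\varphi(z))\le\rho(0,\varphi(z))$ by the estimate $\omega(w)\le\rho(0,w)$ established earlier, apply part~(a) at the point $\varphi(z)$ to control $\rho(0,\varphi(z))$ by $\sum_k\log\frac{4}{1-\mod{\varphi_k(z)}^2}$, and bound $Q_\psi(z)$ by $\sum_j(1-\mod{z_j}^2)\mod{\frac{\partial\psi}{\partial z_j}(z)}$ via~(\ref{equivnormpoly}); multiplying gives the claim. For~(c), the chain rule gives $\frac{\partial(f\circ\varphi)}{\partial z_j}(z)=\sum_k\frac{\partial f}{\partial w_k}(\varphi(z))\frac{\partial\varphi_k}{\partial z_j}(z)$; for $f$ with $\beta_f\le1$ I bound $Q_{f\circ\varphi}(z)$ by its $\ell^1$ sum, insert $\mod{\frac{\partial f}{\partial w_k}(\varphi(z))}\le(1-\mod{\varphi_k(z)}^2)^{-1}$ from the pointwise derivative bound above (using $Q_f\le\beta_f\le1$), and take the supremum over all such $f$ to obtain $T_\varphi(z)\le\sum_{j,k}\mod{\frac{\partial\varphi_k}{\partial z_j}(z)}\frac{1-\mod{z_j}^2}{1-\mod{\varphi_k(z)}^2}$. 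Part~(d) is then immediate upon multiplying~(c) by $\mod{\psi(z)}$, since $\tau_{\psi,\varphi}(z)=\mod{\psi(z)}T_\varphi(z)$.

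The only genuinely substantive point is the per-coordinate reduction of the Bergman distance in part~(a); once the block-diagonal (product) structure of the metric justifies $\rho(0,z)\le\sum_j\rho_{\D}(0,z_j)$, the remaining work is the elementary scalar inequality and routine bookkeeping with the norm equivalence~(\ref{equivnormpoly}) and the chain rule. I therefore expect no real obstacle beyond carefully recording the product structure of the metric.
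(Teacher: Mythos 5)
Your proof is correct, and it diverges from the paper's in two places worth noting. For part (a), the paper computes $\rho(0,z)$ directly: it writes down the explicit Bergman metric $H_z(u,\overline{u})=\sum_{j=1}^n |u_j|^2/(1-|z_j|^2)^2$, parametrizes the geodesic from $0$ to $z$ as $\gamma(t)=tz$, and estimates the length integral by $\int_0^1\left(\sum_j \frac{|z_j|^2}{(1-|z_j|^2t^2)^2}\right)^{1/2}dt\leq \sum_j\int_0^1\frac{|z_j|}{1-|z_j|^2t^2}\,dt=\frac12\sum_j\log\frac{1+|z_j|}{1-|z_j|}$, whereas you reach the same intermediate bound $\rho(0,z)\leq\sum_j\rho_\D(0,z_j)$ via the product structure of the metric and the triangle inequality over coordinate-by-coordinate moves. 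Your route works, though you claim more than you need: ``totally geodesic and isometric to $\D$'' is itself a claim requiring proof, while the inequality direction only requires that a path moving one coordinate along the $\D$-geodesic (others frozen) has $\D^n$-length equal to its $\D$-length, which is immediate from the block-diagonal metric; the paper's integral avoids this discussion entirely. For part (c), the paper invokes $T_\varphi(z)\leq B_\varphi(z)$ together with the closed-form expression for $B_\varphi(z)$ on the polydisk from (1.2) of \cite{CohenColonna:08}, then applies the triangle inequality and $\ell^2\leq\ell^1$; you instead bypass $B_\varphi$ and bound $Q_{f\circ\varphi}(z)$ directly by the chain rule, the upper estimate in (\ref{equivnormpoly}), and the coordinatewise bound $(1-|w_k|^2)\left|\frac{\partial f}{\partial w_k}(w)\right|\leq Q_f(w)\leq\beta_f$, then take the supremum over $\beta_f\leq 1$. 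Your version is more self-contained---it uses only the $Q_f$ formula already displayed in the paper rather than the cited $B_\varphi$ formula---at the cost of a slightly longer computation. Parts (b) and (d) match the paper exactly ($\omega\leq\rho$ plus (a) plus the upper half of (\ref{equivnormpoly}); and $\tau_{\psi,\varphi}(z)=|\psi(z)|T_\varphi(z)$ times (c)), and your scalar chain $\frac12\log\frac{1+t}{1-t}\leq\log\frac{2}{1-t}\leq\log\frac{4}{1-t^2}$ is verified correctly.
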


\begin{proof} Let $z \in D^n$.  To prove (a), observe that for $u\in\C^n$, $$H_z(u,\overline{u})=\sum_{j=1}^n\frac{|u_j|^2}{(1-|z_j|^2)^2}$$ (e.g. see \cite{Timoney:80-I}), and recall that if $\gamma=\gamma(t)$ ($0\leq t\leq 1$) is the geodesic from $w$ to $z$, then $$\rho(w,z)=\int_0^1 H_{\gamma(t)}(\gamma'(t),\overline{\gamma'(t)})^{1/2}\,dt.$$ Since the geodesic from 0 to $z\in\D^n$ is parametrized by $\gamma(t)=tz$, for $0\leq t\leq 1$, we obtain
\begin{eqnarray}\label{polydistestimate}
\rho(0,z) &=& \int_0^1\left(\sum_{j=1}^n\frac{\mod{z_j}^2}{(1-\mod{z_j}^2t^2)^2}\right)^{1/2}\;dt \leq \int_0^1\sum_{j=1}^n \frac{\mod{z_j}}{1-\mod{z_j}^2t^2}\;dt\\ &=& \frac{1}{2}\sum_{j=1}^n\log\frac{1+\mod{z_j}}{1-\mod{z_j}},\nonumber
\end{eqnarray}
proving (a). By the upper estimate of (\ref{equivnormpoly}) and the inequality $\omega(\varphi(z))\leq \rho(0,z)$, part (b) follows immediately from part (a).

To prove (c), observe that by (1.2) of \cite{CohenColonna:08},
\begin{eqnarray} T_\varphi(z)&\leq& B_\varphi(z)=\max_{\norm{w}=1}\left(\sum_{k=1}^n\left|\sum_{j=1}^n\frac{\partial \varphi_k}{\partial z_j}(z)\frac{(1-\mod{z_j}^2)w_j}{1-\mod{\varphi_k(z)}^2}\right|^2\right)^{1/2}\nonumber\\
&\leq & \max_{\norm{w}=1}\left(\sum_{k=1}^n\left(\sum_{j=1}^n\left|\frac{\partial \varphi_k}{\partial z_j}(z)\right|\frac{(1-\mod{z_j}^2)\mod{w_j}}{1-\mod{\varphi_k(z)}^2}\right)^2\right)^{1/2}\nonumber\\
&\leq & \sum_{j,k=1}^n\left|\frac{\partial \varphi_k}{\partial z_j}(z)\right|\frac{1-\mod{z_j}^2}{1-\mod{\varphi_k(z)}^2}.\nonumber\end{eqnarray}

Part (d) is an immediate consequence of the formula $\tau_{\psi,\varphi}(z)=\mod{\psi(z)}T_{\varphi}(z)$ and part (c).\end{proof}

\begin{proof}[Proof of Theorem \ref{theorem:wco bounded D^n}.] If $W_{\psi,\varphi}$ is bounded, then $\psi=W_{\psi,\varphi}1\in\Bloch(\D^n)$, and from Theorem~\ref{thm:ZC-1}, inequality (d) of Lemma \ref{greek}, and Theorem \ref{theorem:tau sigma finite}, it follows that $\sigma_{\psi,\varphi}$ and $\tau_{\psi,\varphi}$ are finite. Conversely, if $\psi\in\Bloch(\D^n)$, and $\sigma_{\psi,\varphi}$ and $\tau_{\psi,\varphi}$ are finite, then $W_{\psi,\varphi}$ is bounded by Theorem \ref{boundest}.

	Next, assume $W_{\psi,\varphi}$ is compact. Then, by Theorem \ref{thm:ZC-1}, and inequalities (b) and (d) of Lemma \ref{greek}, the conditions in (\ref{sigmatauzeropoly}) hold. Conversely, if $W_{\psi,\varphi}$ is bounded and the conditions in (\ref{sigmatauzeropoly}) hold, then $W_{\psi,\varphi}$ is compact by Theorem \ref{bhdcomp}.
\end{proof}

	We conclude the section by giving an example of a bounded weighted composition operator on the polydisk whose corresponding component multiplication operator is not bounded, and an example of a compact weighted composition operator on $\D^n$ whose both component operators are not compact.
\smallskip

\noindent{\it{Examples.}} (a) Fix an index $j \in \{1, \dots, n\}$ and define $\psi(z) = \hbox{Log}\frac{2}{1-z_j}$ and $\varphi(z) = \frac{1-z_j}{2}$ for $z \in \D^n$.  Since $\psi \not\in H^\infty(\D^n)$, the associated multiplication operator $M_\psi$ is unbounded on $\Bloch(\D^n)$. On the other hand $$\begin{aligned}\sigma_{\psi,\varphi}\le \sup_{z \in \D^n}(1-\mod{z_j}^2)\log\frac{4}{1-\mod{\frac{1-z_j}{2}}^2} < \infty,\\
\tau_{\psi,\varphi}\leq \sup_{z \in \D^n}\frac{1-\mod{z_j}^2}{1-\mod{\frac{1-z_j}{2}}^2}\log\frac{2}{\mod{1-z_j}} < \infty,\end{aligned}$$
so that $W_{\psi,\varphi}$ is bounded on $\Bloch(\D^n)$.
\smallskip

\noindent (b) Fix an index $j \in \{1, \dots, n\}$, and for $z \in \D^n$ define $\psi(z) = 1-z_j$, and let $\varphi(z)$ be the vector with $k$th component 0 for $k\ne j$ and $j$th component $\frac{1+z_j}{2}$. Clearly, the associated multiplication operator $M_\psi$ is not compact on $\Bloch(\D^n)$. The associated composition operator $C_\varphi$ is not compact on $\Bloch(\D^n)$ since $$B_\varphi(z) = \frac12\frac{1-\mod{z_j}^2}{1-\mod{\frac{1+z_j}{2}}^2} \not\to 0$$ for $z_j\to 1$ \cite{ShiLuo:00}. Furthermore, $$\begin{aligned}\lim_{\varphi(z)\to \partial \D^n}\sigma_{\psi,\varphi}(z)\leq \lim_{z_j \to 1} (1-\mod{z_j}^2)\log\frac{4}{1-\mod{\frac{1+z_j}{2}}^2} &= 0,\hbox{ and }\\
\lim_{\varphi(z) \to \partial \D^n}\tau_{\psi,\varphi}(z)\leq \frac12 \lim_{z_j\to 1}\mod{1-z_j}\frac{1-\mod{z_j}^2}{1-\mod{\frac{1+z_j}{2}}^2} &= 0.\end{aligned}$$ Hence $W_{\psi,\varphi}$ is compact on $\Bloch(\D^n)$.

%-----------------------------------------------------------------------------------------------
\section{Weighted composition operators from the Bloch spaces into $H^\infty$}
%-----------------------------------------------------------------------------------------------
	In \cite{HosokawaIzuchiOhno:05}, Hosokawa, Izuchi and Ohno characterized the bounded and the compact weighted composition operators from $\Bloch(\D)$ and $\lilBloch(\D)$ into $H^\infty(\D)$. We now provide a characterization of the bounded operators in the environment of a bounded homogeneous domain and determine the operator norm. We also obtain an extension of their results when the domain is the unit ball and the unit polydisk.

\begin{theorem}\label{charaboundedness} Let $D$ be a bounded homogeneous domain, $\psi$ a holomorphic function on $D$, and $\varphi$ a holomorphic self-map of $D$. Then
\begin{enumerate}
\item[(a)] $W_{\psi,\varphi}:\Bloch(D)\to H^{\infty}(D)$ is bounded if and only if $\psi\in H^\infty(D)$ and
$$\eta_{\psi,\varphi}:=\sup_{z\in D}|\psi(z)|\omega(\varphi(z))<\infty.$$ If $W_{\psi,\varphi}$ is bounded on $\Bloch(D)$, then
\begin{equation}\label{estim} \norm{W_{\psi,\varphi}}=\max\{\norm{\psi}_\infty,\eta_{\psi,\varphi}\}.\end{equation}
\vskip4pt

\item[(b)] $W_{\psi,\varphi}:\Bloch_{0*}(D)\to H^{\infty}(D)$ is bounded if and only if $\psi\in H^\infty(D)$ and  $$\eta_{\,0,\psi,\varphi}:=\sup_{z\in D}|\psi(z)|\omega_0(\varphi(z))<\infty.$$ If $W_{\psi,\varphi}$ is bounded on $\Bloch_{0*}(D)$, then
\begin{equation}
\nonumber\norm{W_{\psi,\varphi}}=\max\{\norm{\psi}_\infty,\eta_{\,0,\psi,\varphi}\}.\end{equation}
\end{enumerate}
\end{theorem}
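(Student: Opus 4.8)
The plan is to prove both the characterization and the exact norm formula at once, by establishing the two inequalities $\norm{W_{\psi,\varphi}} \le \max\{\supnorm{\psi},\eta_{\psi,\varphi}\}$ and $\norm{W_{\psi,\varphi}} \ge \max\{\supnorm{\psi},\eta_{\psi,\varphi}\}$. The single tool driving everything is the point-evaluation estimate of Lemma~\ref{inequality:f(z) bound bloch} together with the defining extremal property of $\omega$ (respectively $\omega_0$). I would carry out part (a) in full and then observe that part (b) follows verbatim after replacing $\Bloch(D)$, $\omega$, $\eta_{\psi,\varphi}$ by $\Bloch_{0^*}(D)$, $\omega_0$, $\eta_{\,0,\psi,\varphi}$.

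For the upper bound, which simultaneously yields sufficiency and the $\le$ half of the norm formula, I would fix $f\in\Bloch(D)$ and note that $W_{\psi,\varphi}f$ is holomorphic with
$$\mod{(W_{\psi,\varphi}f)(z)} = \mod{\psi(z)}\,\mod{f(\varphi(z))}, \quad z \in D.$$
Applying Lemma~\ref{inequality:f(z) bound bloch} at the point $\varphi(z)$ gives $\mod{f(\varphi(z))}\le \mod{f(0)}+\omega(\varphi(z))\beta_f$, whence
$$\mod{(W_{\psi,\varphi}f)(z)} \le \supnorm{\psi}\,\mod{f(0)} + \eta_{\psi,\varphi}\,\beta_f.$$
Taking the supremum over $z$ and then bounding $\mod{f(0)}\le\blochnorm{f}$ and $\beta_f\le\blochnorm{f}$ produces $\supnorm{W_{\psi,\varphi}f}\le \max\{\supnorm{\psi},\eta_{\psi,\varphi}\}\blochnorm{f}$. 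Thus, if $\psi\in H^\infty(D)$ and $\eta_{\psi,\varphi}<\infty$, then $W_{\psi,\varphi}$ maps $\Bloch(D)$ boundedly into $H^\infty(D)$ with $\norm{W_{\psi,\varphi}}\le \max\{\supnorm{\psi},\eta_{\psi,\varphi}\}$.

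For the lower bound and the necessity direction, I would first test on the constant function $f\equiv 1$: since $\blochnorm{1}=1$ and $W_{\psi,\varphi}1=\psi$, boundedness forces $\psi\in H^\infty(D)$, and in all cases $\norm{W_{\psi,\varphi}}\ge\supnorm{W_{\psi,\varphi}1}=\supnorm{\psi}$. To recover $\eta_{\psi,\varphi}$, fix $w\in D$ and test on functions $f\in\Bloch(D)$ with $f(0)=0$ and $\blochnorm{f}\le 1$: from
$$\norm{W_{\psi,\varphi}}\ge\supnorm{W_{\psi,\varphi}f}\ge \mod{(W_{\psi,\varphi}f)(w)}=\mod{\psi(w)}\,\mod{f(\varphi(w))},$$
taking the supremum over all such $f$ and invoking the definition of $\omega(\varphi(w))$ gives $\norm{W_{\psi,\varphi}}\ge\mod{\psi(w)}\omega(\varphi(w))$; a final supremum over $w$ yields $\norm{W_{\psi,\varphi}}\ge\eta_{\psi,\varphi}$. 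Combining the two inequalities gives $\norm{W_{\psi,\varphi}}=\max\{\supnorm{\psi},\eta_{\psi,\varphi}\}$, and in particular boundedness forces $\eta_{\psi,\varphi}\le\norm{W_{\psi,\varphi}}<\infty$, completing necessity.

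There is no serious obstacle; the mild point to get right is the tightness of the two estimates, namely that the quantity $\omega(\varphi(z))$ serves both as the controlling factor in the upper bound (via Lemma~\ref{inequality:f(z) bound bloch}) and is recovered exactly in the lower bound by the extremal functions in its definition, so that the supremum defining $\eta_{\psi,\varphi}$ matches on both sides. For part (b) the only extra thing to verify is that the test functions used in the lower bound may be taken in $\Bloch_{0^*}(D)$: the constant $1$ lies in $\Bloch_{0^*}(D)$ since $Q_1\equiv 0$, and the supremum defining $\omega_0(\varphi(w))$ is by construction taken over $\Bloch_{0^*}(D)$, so the identical argument applies with $\omega$ and $\eta_{\psi,\varphi}$ replaced by $\omega_0$ and $\eta_{\,0,\psi,\varphi}$.
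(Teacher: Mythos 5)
Your proposal is correct and follows essentially the same route as the paper's proof: the upper bound via the point-evaluation estimate of Lemma~\ref{inequality:f(z) bound bloch}, and the lower bound by testing on the constant function $1$ and on functions $f$ with $f(0)=0$, $\blochnorm{f}\le 1$, recovering $\omega(\varphi(w))$ (resp.\ $\omega_0(\varphi(w))$) from its defining supremum. One small point of rigor: in the upper bound, bounding $\mod{f(0)}\le\blochnorm{f}$ and $\beta_f\le\blochnorm{f}$ separately only yields the sum $\left(\supnorm{\psi}+\eta_{\psi,\varphi}\right)\blochnorm{f}$; to get the max you should instead use the exact identity $\mod{f(0)}+\beta_f=\blochnorm{f}$, so that $\supnorm{\psi}\mod{f(0)}+\eta_{\psi,\varphi}\,\beta_f\le\max\{\supnorm{\psi},\eta_{\psi,\varphi}\}\left(\mod{f(0)}+\beta_f\right)=\max\{\supnorm{\psi},\eta_{\psi,\varphi}\}\blochnorm{f}$, which is precisely how the paper argues.
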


\begin{proof} To prove (a), assume $W_{\psi,\varphi}$ is bounded on $\Bloch(D)$. Then $\psi=W_{\psi,\varphi}1\in H^\infty(D)$, $\norm{\psi}_\infty\leq \norm{W_{\psi,\varphi}}$, and for each $f\in \Bloch(D)$ with $\norm{f}_\Bloch\leq 1$, and for each $z\in D$, we have
$$\norm{W_{\psi,\varphi}}\geq \norm{\psi(f\circ \varphi)}_{\infty}\geq |\psi(z)||f(\varphi(z))|.$$
Taking the supremum over all such functions $f$ such that $f(0)=0$, and over all $z\in D$, we obtain $\norm{W_{\psi,\varphi}}\geq\eta_{\psi,\varphi}$, proving that $\eta_{\psi,\varphi}<\infty$ and
\begin{equation}\label{lowerestim} \norm{W_{\psi,\varphi}}\geq \max\{\norm{\psi}_\infty, \eta_{\psi,\varphi}\}.\end{equation}

	Conversely, suppose $\psi\in H^\infty(D)$ and $\eta_{\psi,\varphi}$ is finite. Then, by Lemma~\ref{inequality:f(z) bound bloch}, for each $f\in \Bloch(D)$ we have
\begin{eqnarray} \sup_{z\in D}|\psi(z)||f(\varphi(z))|&\leq &\sup_{z\in D}|\psi(z)|(|f(0)|+\omega(\varphi(z))\beta_f)\nonumber\\
&\leq&\norm{\psi}_\infty(\norm{f}_\Bloch-\beta_f)+\eta_{\psi,\varphi}\beta_f\nonumber\\&\leq& \max\{\norm{\psi}_\infty,\eta_{\psi,\varphi}\}\norm{f}_\Bloch,\label{upperestim}\end{eqnarray}
proving the boundedness of $W_{\psi,\varphi}$. From (\ref{lowerestim}) and (\ref{upperestim}) we also obtain (\ref{estim}).
The proof of (b) is analogous.
\end{proof}

	Recalling that for each $z\in \B_n$, $$\omega_0(z)=\omega(z)=\frac12\log\frac{1+\norm{z}}{1-\norm{z}},$$ we deduce the following extension to the unit ball of Theorem~6.1 of \cite{HosokawaIzuchiOhno:05}, which is equivalent to Theorem 1 in \cite{LiStevic:08}. The evaluation of the operator norm has not appeared before.

\begin{corollary}\label{Ballcharaboundedness} Let $\psi$ be a holomorphic function on $\B_n$ and $\varphi$ a holomorphic self-map of $\B_n$. Then the following statements are equivalent:
\begin{enumerate}
\item[(a)] $W_{\psi,\varphi}:\Bloch(\B_n)\to H^{\infty}(\B_n)$ is bounded.

\item[(b)] $W_{\psi,\varphi}:\lilBloch(\B_n)\to H^{\infty}(\B_n)$ is bounded.

\item[(c)] $\psi\in H^\infty(\B_n)$ and  $$\sup_{z\in \B_n}|\psi(z)|\log\frac{1+\norm{\varphi(z)}}{1-\norm{\varphi(z)}}<\infty.$$
\end{enumerate}
Furthermore, $$\norm{W_{\psi,\varphi}}=\max\left\{\norm{\psi}_\infty, \sup_{z\in\B_n}\frac12|\psi(z)|\log\frac{1+\norm{\varphi(z)}}{1-\norm{\varphi(z)}}\right\}.$$
\end{corollary}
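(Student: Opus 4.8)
The plan is to obtain the corollary as a direct specialization of Theorem~\ref{charaboundedness} to the case $D=\B_n$, using the explicit evaluation of $\omega$ and $\omega_0$ on the ball together with the coincidence $\lilBloch(\B_n)=\Bloch_{0^*}(\B_n)$. First I would recall from Remark~\ref{remark_section_4} that for every $z\in\B_n$ one has $\omega_0(z)=\omega(z)=\rho(z,0)=\frac12\log\frac{1+\norm{z}}{1-\norm{z}}$. Substituting $\varphi(z)$ for $z$ and taking the supremum, this shows that the two quantities appearing in Theorem~\ref{charaboundedness} collapse to a single expression on the ball, namely
$$\eta_{\psi,\varphi}=\eta_{\,0,\psi,\varphi}=\sup_{z\in\B_n}\frac12|\psi(z)|\log\frac{1+\norm{\varphi(z)}}{1-\norm{\varphi(z)}}.$$
In particular this common quantity is finite precisely when the supremum in condition (c) is finite, since the two differ only by the positive constant factor $\frac12$.

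Next I would establish the equivalence (a)$\iff$(c). By Theorem~\ref{charaboundedness}(a) applied with $D=\B_n$, the operator $W_{\psi,\varphi}\colon\Bloch(\B_n)\to H^\infty(\B_n)$ is bounded if and only if $\psi\in H^\infty(\B_n)$ and $\eta_{\psi,\varphi}<\infty$; by the identification above this is exactly condition (c). For (b)$\iff$(c) I would invoke the fact recorded in Section~\ref{section:bloch space} that $\lilBloch(\B_n)=\Bloch_{0^*}(\B_n)$, so that Theorem~\ref{charaboundedness}(b) applies verbatim with $\Bloch_{0^*}(\B_n)$ replaced by $\lilBloch(\B_n)$: boundedness holds if and only if $\psi\in H^\infty(\B_n)$ and $\eta_{\,0,\psi,\varphi}<\infty$, which again is condition (c). The norm formula then follows from the two norm evaluations in Theorem~\ref{charaboundedness}, since both $\max\{\norm{\psi}_\infty,\eta_{\psi,\varphi}\}$ and $\max\{\norm{\psi}_\infty,\eta_{\,0,\psi,\varphi}\}$ reduce to $\max\{\norm{\psi}_\infty,\ \sup_{z\in\B_n}\frac12|\psi(z)|\log\frac{1+\norm{\varphi(z)}}{1-\norm{\varphi(z)}}\}$.

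Since every ingredient is already established, there is no genuine obstacle here; the only point requiring care is the bookkeeping of definitions. Specifically, one must confirm that the identity $\omega_0=\omega$ forces the separate Bloch and little-Bloch hypotheses of Theorem~\ref{charaboundedness} to collapse to the single condition (c), and likewise forces the two norm expressions to collapse to one. The whole argument is therefore a short matching of substitutions into Theorem~\ref{charaboundedness}, with the equivalence of (a), (b), and (c) emerging automatically from the fact that the defining quantities $\eta_{\psi,\varphi}$ and $\eta_{\,0,\psi,\varphi}$ agree on $\B_n$.
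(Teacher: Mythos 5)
Your proposal is correct and follows exactly the paper's route: the corollary is deduced from Theorem~\ref{charaboundedness} by substituting the identity $\omega_0(z)=\omega(z)=\frac12\log\frac{1+\norm{z}}{1-\norm{z}}$ on $\B_n$ (Remark~\ref{remark_section_4}) together with $\lilBloch(\B_n)=\Bloch_{0^*}(\B_n)$, so that $\eta_{\psi,\varphi}=\eta_{\,0,\psi,\varphi}$ and both boundedness criteria and norm formulas collapse to the single condition (c) and the stated norm evaluation. You also correctly handled the only bookkeeping subtlety, namely that the supremum in (c) differs from $\eta_{\psi,\varphi}$ by the harmless factor $\frac12$.
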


\noindent In the case where $\psi$ is the constant function one, the condition of the finiteness of $$\sup_{z\in \B_n}\log\frac{1+\norm{\varphi(z)}}{1-\norm{\varphi(z)}}$$ implies that $\varphi(z)$ cannot approach the boundary, or else the logarithmic term would tend to infinity.  Thus, we have the following corollary.

\begin{corollary} Let $\varphi$ be a holomorphic self-map of $\B_n$.  Then the following statements are equivalent:
\begin{enumerate}
\item[(a)] $C_\varphi:\Bloch(\B_n)\to H^{\infty}(\B_n)$ is bounded.

\item[(b)] $C_\varphi:\lilBloch(\B_n)\to H^{\infty}(\B_n)$ is bounded.

\item[(c)] $\varphi(\B_n)$ has compact closure in $\B_n$.
\end{enumerate}
Furthermore, the operator norm of $C_{\varphi}$ is the maximum between 1 and the Bergman distance of the boundary of the range of $\varphi$ from the origin.
\end{corollary}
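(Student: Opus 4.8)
The plan is to obtain this corollary as the special case $\psi\equiv 1$ of Corollary~\ref{Ballcharaboundedness}, since $C_\varphi=W_{1,\varphi}$. Because the constant function $1$ lies in $H^\infty(\B_n)$ with $\norm{1}_\infty=1$, condition (c) of Corollary~\ref{Ballcharaboundedness} collapses to the single requirement
$$\sup_{z\in\B_n}\log\frac{1+\norm{\varphi(z)}}{1-\norm{\varphi(z)}}<\infty,$$
and that corollary already gives the equivalence of the boundedness of $C_\varphi$ on $\Bloch(\B_n)$ and on $\lilBloch(\B_n)$ with this finiteness. Thus the only new work is to recognize this analytic condition as the geometric statement (c) and to translate the resulting norm into Bergman-distance language.

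First I would prove that (c) is equivalent to the displayed finiteness. The map $t\mapsto\log\frac{1+t}{1-t}$ is continuous, strictly increasing on $[0,1)$, and tends to $+\infty$ as $t\to1^-$; hence the supremum above is finite if and only if $R:=\sup_{z\in\B_n}\norm{\varphi(z)}<1$. When $R<1$ we have $\varphi(\B_n)\subseteq\{w\in\C^n:\norm{w}\le R\}$, a compact subset of $\B_n$, so $\overline{\varphi(\B_n)}$ is compact and contained in $\B_n$; conversely, if $\overline{\varphi(\B_n)}$ is a compact subset of $\B_n$ it is bounded away from $\partial\B_n$, forcing $R<1$. This establishes (a)$\Leftrightarrow$(b)$\Leftrightarrow$(c).

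Next I would compute the norm. Corollary~\ref{Ballcharaboundedness} with $\psi\equiv1$ gives
$$\norm{C_\varphi}=\max\left\{1,\ \sup_{z\in\B_n}\tfrac12\log\frac{1+\norm{\varphi(z)}}{1-\norm{\varphi(z)}}\right\}.$$
By Remark~\ref{remark_section_4}, $\tfrac12\log\frac{1+\norm{w}}{1-\norm{w}}=\rho(w,0)$ for every $w\in\B_n$, so the inner supremum equals $\sup_{w\in\varphi(\B_n)}\rho(w,0)$, and by continuity of $\rho(\cdot,0)$ this coincides with $\sup_{w\in\overline{\varphi(\B_n)}}\rho(w,0)$. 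Under hypothesis (c) the set $\overline{\varphi(\B_n)}$ is compact, so this supremum is attained; since $\rho(w,0)$ is a strictly increasing function of $\norm{w}$, the maximizer has maximal Euclidean norm in $\overline{\varphi(\B_n)}$ and therefore lies on $\partial\,\overline{\varphi(\B_n)}$. Hence the supremum equals the Bergman distance from the origin to the boundary of the range of $\varphi$, which yields the claimed norm formula.

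I do not anticipate a genuine obstacle here, as the statement is a specialization of an already-proved result. The only point needing a little care is the final identification: one must observe, via the monotonicity of $\rho(\cdot,0)$ in the Euclidean norm together with the compactness of $\overline{\varphi(\B_n)}$, that the supremum of $\rho(\cdot,0)$ over the whole range is controlled by, and attained on, the boundary of that range.
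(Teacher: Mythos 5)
Your proposal is correct and follows essentially the same route as the paper, which deduces the corollary from Corollary~\ref{Ballcharaboundedness} with $\psi\equiv 1$ via the one-line observation that finiteness of $\sup_{z\in\B_n}\log\frac{1+\norm{\varphi(z)}}{1-\norm{\varphi(z)}}$ forces $\varphi(\B_n)$ to stay away from $\partial\B_n$, together with the identity $\omega(w)=\rho(w,0)$. You have merely filled in the details (monotonicity of $t\mapsto\log\frac{1+t}{1-t}$, compactness, attainment of the supremum on the boundary of the range) that the paper leaves implicit.
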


We now show that Theorem~6.1 of \cite{HosokawaIzuchiOhno:05} can be extended to the unit polydisk.

\begin{theorem}\label{Polydiskcharaboundedness} Let $\psi$ be a holomorphic on $\D^n$ and $\varphi$ a holomorphic self-map of $\D^n$. Then the following statements are equivalent:
\begin{enumerate}
\item[(a)] $W_{\psi,\varphi}:\Bloch(\D^n)\to H^{\infty}(\D^n)$ is bounded.

\item[(b)] $W_{\psi,\varphi}:\Bloch_{0*}(\D^n)\to H^{\infty}(\D^n)$ is bounded.

\item[(c)] $\psi\in H^\infty(\D^n)$ and
\begin{eqnarray}\label{polybddcond}\sup_{z\in \D^n}|\psi(z)|\sum_{j=1}^n\log\frac{1+\mod{\varphi_j(z)}}{1-\mod{\varphi_j(z)}}<\infty.\end{eqnarray}
\end{enumerate}
\end{theorem}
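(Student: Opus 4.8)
The plan is to prove the cycle (a) $\Rightarrow$ (b) $\Rightarrow$ (c) $\Rightarrow$ (a), using Theorem~\ref{charaboundedness} to convert each boundedness statement into the finiteness of $\eta_{\psi,\varphi}$ (respectively $\eta_{\,0,\psi,\varphi}$), and then bracketing these extremal quantities between constant multiples of the logarithmic sum appearing in (c). The implication (a) $\Rightarrow$ (b) is immediate: since $\Bloch_{0*}(\D^n)$ is a subspace of $\Bloch(\D^n)$ carrying the restriction of the Bloch norm, the restriction to $\Bloch_{0*}(\D^n)$ of a bounded operator $W_{\psi,\varphi}\colon\Bloch(\D^n)\to H^\infty(\D^n)$ is again bounded into $H^\infty(\D^n)$.

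For (c) $\Rightarrow$ (a) I would estimate $\omega$ from above. Combining the inequality $\omega(w)\le\rho(0,w)$ established in Section~\ref{section:wco bhd} with the distance bound $\rho(0,w)\le\frac12\sum_{j=1}^n\log\frac{1+\mod{w_j}}{1-\mod{w_j}}$ recorded in~(\ref{polydistestimate}), one obtains
$$\eta_{\psi,\varphi}=\sup_{z\in\D^n}\mod{\psi(z)}\omega(\varphi(z))\le\frac12\sup_{z\in\D^n}\mod{\psi(z)}\sum_{j=1}^n\log\frac{1+\mod{\varphi_j(z)}}{1-\mod{\varphi_j(z)}},$$
which is finite by (c). Since (c) also provides $\psi\in H^\infty(\D^n)$, Theorem~\ref{charaboundedness}(a) yields the boundedness of $W_{\psi,\varphi}\colon\Bloch(\D^n)\to H^\infty(\D^n)$.

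The substance of the argument is the reverse comparison needed for (b) $\Rightarrow$ (c), namely a matching lower bound $\omega_0(w)\ge\frac1{2n}\sum_{j=1}^n\log\frac{1+\mod{w_j}}{1-\mod{w_j}}$. To produce it, I would fix an index $j$ and lift one-variable test functions to the polydisk by setting $f(z)=h(z_j)$, where $h\in\lilBloch(\D)$ satisfies $h(0)=0$ and has one-variable Bloch seminorm at most $1$. The seminorm formula of \cite{CohenColonna:08} gives $Q_f(z)=(1-\mod{z_j}^2)\mod{h'(z_j)}$, so $f(0)=0$ and $\beta_f=\beta_h\le1$; moreover $\mod{z_j}\to1$ as $z\to\partial^*\D^n$, so $Q_f(z)\to0$ there and $f\in\Bloch_{0*}(\D^n)$. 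Hence $\omega_0(w)\ge\mod{f(w)}=\mod{h(w_j)}$, and taking the supremum over all such $h$ identifies the right-hand side with the one-variable extremal quantity, which by Remark~\ref{remark_section_4} applied to $\D=\B_1$ equals $\frac12\log\frac{1+\mod{w_j}}{1-\mod{w_j}}$. Maximizing over $j$ and using $\max_j a_j\ge\frac1n\sum_j a_j$ for nonnegative $a_j$ gives the claimed lower bound. Assuming (b), Theorem~\ref{charaboundedness}(b) gives $\psi\in H^\infty(\D^n)$ and $\eta_{\,0,\psi,\varphi}<\infty$, and the lower bound then forces
$$\sup_{z\in\D^n}\mod{\psi(z)}\sum_{j=1}^n\log\frac{1+\mod{\varphi_j(z)}}{1-\mod{\varphi_j(z)}}\le2n\,\eta_{\,0,\psi,\varphi}<\infty,$$
which is exactly (c).

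I expect the lower estimate on $\omega_0$ to be the main obstacle: the upper estimate and the restriction step are routine, but passing from boundedness of $\mod{\psi}\,\omega_0(\varphi)$ to boundedness of $\mod{\psi}\sum_j\log\frac{1+\mod{\varphi_j}}{1-\mod{\varphi_j}}$ requires knowing that $\omega_0$ is genuinely \emph{comparable} to the logarithmic sum, not merely dominated by it. The coordinate-embedding $f(z)=h(z_j)$ reduces this to the sharp one-variable value of $\omega_0$, and this is the one place where the explicit product structure of the polydisk—both the diagonal form of the Bergman metric and the shape of the distinguished boundary—is genuinely used. Unlike the ball case in Corollary~\ref{Ballcharaboundedness}, the gap between the factors $\tfrac12$ and $\tfrac1{2n}$ prevents the method from pinning down the operator norm, so no norm formula is asserted here.
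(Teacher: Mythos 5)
Your proposal is correct, and for the key implication $(b)\Rightarrow(c)$ it takes a genuinely different route from the paper. The paper argues directly with a single explicit test function per coordinate: for fixed $j$ and $\lambda\in\D^n$ it takes $h(z)=\Log\frac{4}{1-z_j\overline{\varphi_j(\lambda)}}$, checks by hand that $\blochnorm{h}\leq 2\log 2+2$ and that $h\in\Bloch_{0*}(\D^n)$, and then reads off $|\psi(\lambda)|\log\frac{1+|\varphi_j(\lambda)|}{1-|\varphi_j(\lambda)|}\leq(2\log2+2)\norm{W_{\psi,\varphi}}$ from $\norm{\psi(h\circ\varphi)}_\infty$, summing over $j$; it never needs the sharp one-variable extremal value. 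You instead prove a structural lower bound $\omega_0(w)\geq\frac{1}{2n}\sum_{j=1}^n\log\frac{1+|w_j|}{1-|w_j|}$ by lifting one-variable little Bloch functions via $f(z)=h(z_j)$ (the seminorm identity $Q_f(z)=(1-|z_j|^2)|h'(z_j)|$ and the membership $f\in\Bloch_{0*}(\D^n)$ are verified correctly, since $|z_j|\to1$ as $z\to\partial^*\D^n$), identifying the coordinate supremum with the one-variable $\omega_0$ via Remark~\ref{remark_section_4}, and then you invoke Theorem~\ref{charaboundedness}(b) as a black box to get $\eta_{\,0,\psi,\varphi}<\infty$. Your route buys more: combined with the upper bound from (\ref{polydistestimate}) it establishes the two-sided comparability $\frac{1}{2n}\sum_{j}\log\frac{1+|w_j|}{1-|w_j|}\leq\omega_0(w)\leq\omega(w)\leq\frac12\sum_{j}\log\frac{1+|w_j|}{1-|w_j|}$ on the polydisk, a reusable estimate of independent interest that speaks (up to constants) to the question raised in Remark~\ref{remark_section_4}, and it gives the quantitative bound $\sup_{z}|\psi(z)|\sum_j\log\frac{1+|\varphi_j(z)|}{1-|\varphi_j(z)|}\leq 2n\,\eta_{\,0,\psi,\varphi}$. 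What the paper's approach buys in exchange is self-containedness in this step---only an elementary Bloch-norm estimate of one explicit function, no appeal to the sharp extremal theorem of Zhu---with the dimensional dependence ($n(2\log2+2)\norm{W_{\psi,\varphi}}$ after summing) comparable to your $2n\,\eta_{\,0,\psi,\varphi}$. Your closing observation is also accurate: neither argument yields an exact operator norm on $\D^n$, and indeed the paper asserts none, in contrast to the ball case of Corollary~\ref{Ballcharaboundedness}. The implications $(a)\Rightarrow(b)$ (restriction) and $(c)\Rightarrow(a)$ (via $\omega\leq\rho(0,\cdot)$, (\ref{polydistestimate}), and Theorem~\ref{charaboundedness}(a)) coincide with the paper's.
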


\begin{proof} The implication $(a)\Longrightarrow (b)$ is obvious.

\noindent $(b)\Longrightarrow (c)$: It is clear that $\psi\in H^\infty(\D^n)$.
Fix $j=1,\dots,n$ and $\lambda\in \D^n$. For $z\in \D^n$ define
$$h(z)=\hbox{Log}\frac4{1-z_j\overline{\varphi_j(\lambda)}}.$$ Then $$\norm{h}_\Bloch =2\log 2+\sup_{|z_j|<1}\frac{(1-|z_j|^2)|\varphi_j(\lambda)|}{|1-z_j\overline{\varphi_j(\lambda)}|}\leq 2\log 2+2.$$ Furthermore,
$$Q_{h}(z)\leq \frac{(1-\mod{z_j}^2)|\varphi_j(\lambda)|}{1-|\varphi_j(\lambda)|}\to 0$$
as $\mod{z_j}\to 1$. Thus, $h\in \Bloch_{0*}(\D^n)$. By the boundedness of $W_{\psi,\varphi}:\Bloch_{0*}(\D^n)\to H^{\infty}(\D^n)$, we obtain
\begin{eqnarray} (2\log 2+2)\norm{W_{\psi,\varphi}}&\geq &\norm{\psi(h\circ \varphi)}_\infty\geq |\psi(\lambda)|\log\frac4{1-\mod{\varphi_j(\lambda)}^2}\nonumber\\
&\geq& |\psi(\lambda)|\log\frac{1+\mod{\varphi_j(\lambda)}}{1-\mod{\varphi_j(\lambda)}}.\nonumber\end{eqnarray}
Summing over all $j=1,\dots,n$ and taking the supremum over all $\lambda\in\D^n$, we get (\ref{polybddcond}).

\noindent $(c)\Longrightarrow (a)$: Observe that for $z\in\D^n$, by (\ref{polydistestimate}), we have
\begin{eqnarray}\label{omegarhobounds}\omega(\varphi(z))\leq \rho(0,\varphi(z))\leq \frac12\sum_{j=1}^n\log\frac{1+\mod{\varphi_j(z)}}{1-\mod{\varphi_j(z)}}.\end{eqnarray}
The result follows at once from Theorem~\ref{charaboundedness}(a).
\end{proof}

We now give a sufficient condition for compactness which can be proved as Theorem~\ref{bhdcomp}.

\begin{theorem}\label{suffcondinfty} Let $D$ be a bounded homogeneous domain, $\psi$ a holomorphic function on $D$, and $\varphi$ a holomorphic self-map of $D$. Then $W_{\psi,\varphi}:\Bloch(D)\to H^{\infty}(D)$ is compact if $\psi\in H^\infty(D)$ and
$$\lim_{\varphi(z)\to \partial D}|\psi(z)|\omega(\varphi(z))=0.$$
\end{theorem}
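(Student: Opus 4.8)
The plan is to mirror the proof of Theorem~\ref{bhdcomp}, exploiting the fact that the target norm here is the sup norm rather than the Bloch norm; this both simplifies the estimate and eliminates any need for a condition on $\mod{\psi}T_\varphi$, since $\supnorm{W_{\psi,\varphi}f}=\sup_{z\in D}\mod{\psi(z)}\mod{f(\varphi(z))}$ involves no derivative of the image.

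First I would record the $H^\infty$-analogue of the sufficiency half of Lemma~\ref{suffcond}: that $W_{\psi,\varphi}:\Bloch(D)\to H^\infty(D)$ is compact provided $\supnorm{\psi(f_k\circ\varphi)}\to 0$ for every sequence $\{f_k\}$ in $\Bloch(D)$ with $\blochnorm{f_k}\le 1$ converging to $0$ locally uniformly in $D$. This reduction is verbatim the second paragraph of the proof of Lemma~\ref{suffcond}, the only change being that convergence is measured in $\supnorm{\cdot}$: given an arbitrary sequence with $\blochnorm{f_k}\le 1$, after subtracting $f_k(z_0)$ the estimate $\mod{f_k(z)}\le\rho(z,z_0)$ makes $\{f_k\}$ uniformly bounded on compact subsets, so Montel's theorem yields a subsequence $f_{k_j}\to f$ locally uniformly with $f\in\Bloch(D)$ by Theorem~3.3 of \cite{AllenColonna:07}; applying the hypothesis to $g_{k_j}=f_{k_j}-f$ then gives $W_{\psi,\varphi}f_{k_j}\to W_{\psi,\varphi}f$ in $H^\infty(D)$.

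Second, I would verify the sequential condition itself. Let $\{f_k\}$ with $\blochnorm{f_k}\le 1$ converge to $0$ locally uniformly and fix $\epsilon>0$. By hypothesis there is $r>0$ with $\mod{\psi(z)}\omega(\varphi(z))<\epsilon/2$ whenever $\rho(\varphi(z),\partial D)<r$. On this boundary region, Lemma~\ref{inequality:f(z) bound bloch} gives
$$\mod{\psi(z)f_k(\varphi(z))}\le\supnorm{\psi}\mod{f_k(0)}+\mod{\psi(z)}\omega(\varphi(z))\beta_{f_k}<\supnorm{\psi}\mod{f_k(0)}+\frac{\epsilon}{2},$$
using $\beta_{f_k}\le 1$, and $\mod{f_k(0)}\to 0$. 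On the complementary region $\{z\in D:\rho(\varphi(z),\partial D)\ge r\}$, the point $\varphi(z)$ ranges over a compact subset $K$ of $D$, and since $\psi\in H^\infty(D)$ we have $\mod{\psi(z)f_k(\varphi(z))}\le\supnorm{\psi}\sup_{w\in K}\mod{f_k(w)}\to 0$ by local uniform convergence. Combining the two regions yields $\supnorm{\psi(f_k\circ\varphi)}\to 0$, which by the first step proves compactness.

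I expect the only genuinely new point, and hence the place to argue carefully, to be the $H^\infty$-version of the compactness reduction in the first step; this, however, is essentially immediate because sup-norm convergence dominates locally uniform convergence, so Montel's theorem applies exactly as in Lemma~\ref{suffcond}. The estimate in the second step is strictly simpler than its counterpart in Theorem~\ref{bhdcomp}, since only the single hypothesis $\lim_{\varphi(z)\to\partial D}\mod{\psi(z)}\omega(\varphi(z))=0$ enters and the term governed by the Bergman constant of $\varphi$ drops out entirely.
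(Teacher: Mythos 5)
Your proposal is correct and takes essentially the same approach as the paper: the paper offers no explicit proof of Theorem~\ref{suffcondinfty}, stating only that it ``can be proved as Theorem~\ref{bhdcomp},'' and your argument---an $H^\infty$-analogue of the sufficiency half of Lemma~\ref{suffcond} via Montel's theorem, followed by the two-region estimate using Lemma~\ref{inequality:f(z) bound bloch} near the boundary and local uniform convergence on the compact complement---is precisely that adaptation. Your observation that the sup norm involves no derivative of $\psi(f\circ\varphi)$, so the $\mod{\psi}T_\varphi$ condition drops out entirely, correctly identifies why the single hypothesis on $\mod{\psi(z)}\omega(\varphi(z))$ suffices here.
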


This sufficient condition is also necessary when $D$ is the unit ball. Indeed, the following result, proved by Li and Stevi\'c in \cite{LiStevic:08} (Theorem~4), is the extension to the unit ball of Theorem~6.2 of \cite{HosokawaIzuchiOhno:05}.

\begin{theorem}[\cite{LiStevic:08}] \label{B} Let $\psi$ be a holomorphic function on $\B_n$ and $\varphi$ a holomorphic self-map of $\B_n$. Then the following statements are equivalent:
\begin{enumerate}
\item[(a)] $W_{\psi,\varphi}:\Bloch(\B_n)\to H^{\infty}(\B_n)$ is compact.

\item[(b)] $W_{\psi,\varphi}:\lilBloch(\B_n)\to H^{\infty}(\B_n)$ is compact.

\item[(c)] $\psi\in H^\infty(\B_n)$ and  $$\lim_{\norm{\varphi(z)}\to 1}|\psi(z)|\log\frac{2}{1-\norm{\varphi(z)}}=0.$$
\end{enumerate}
\end{theorem}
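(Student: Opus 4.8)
The plan is to prove the cycle of implications $(a)\Rightarrow(b)\Rightarrow(c)\Rightarrow(a)$, which establishes all three equivalences at once. The implication $(a)\Rightarrow(b)$ is immediate: since $\lilBloch(\B_n)$ is a closed subspace of $\Bloch(\B_n)$, the image under $W_{\psi,\varphi}$ of the closed unit ball of $\lilBloch(\B_n)$ is contained in the image of the closed unit ball of $\Bloch(\B_n)$, whose closure is compact when $W_{\psi,\varphi}\colon\Bloch(\B_n)\to H^\infty(\B_n)$ is compact; hence the restriction is compact. Taking $f\equiv 1\in\lilBloch(\B_n)$ also shows that in all cases $\psi=W_{\psi,\varphi}1\in H^\infty(\B_n)$, so the membership half of $(c)$ is free.

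For $(c)\Rightarrow(a)$ I would reduce condition $(c)$ to the hypothesis of Theorem~\ref{suffcondinfty}. Recall that for $w\in\B_n$ one has $\omega(w)=\frac12\log\frac{1+\norm{w}}{1-\norm{w}}$. Using $1+\norm{w}\le 2$ and the positivity of $\log\frac{2}{1-\norm{w}}$, we get $\omega(w)\le\frac12\log\frac{2}{1-\norm{w}}\le\log\frac{2}{1-\norm{w}}$, so that
$$\mod{\psi(z)}\omega(\varphi(z))\le\mod{\psi(z)}\log\frac{2}{1-\norm{\varphi(z)}}\longrightarrow 0\quad\text{as }\norm{\varphi(z)}\to 1.$$
Thus $\lim_{\varphi(z)\to\partial\B_n}\mod{\psi(z)}\omega(\varphi(z))=0$, and Theorem~\ref{suffcondinfty} gives compactness of $W_{\psi,\varphi}\colon\Bloch(\B_n)\to H^\infty(\B_n)$, i.e.\ $(a)$.

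The substantive direction is $(b)\Rightarrow(c)$. Let $\{z_k\}$ be any sequence with $\norm{\varphi(z_k)}\to 1$ (if none exists, $(c)$ holds vacuously), and set $a_k=\varphi(z_k)$. I would reuse the test functions from the proof of Theorem~\ref{theorem:wco bounded/compact B_n},
$$f_k(z)=\frac{\left(\Log\frac{2}{1-\ip{z,a_k}}\right)^2}{\log\frac{2}{1-\norm{a_k}^2}}.$$
For each fixed $k$ the function $f_k$ extends holomorphically past $\overline{\B_n}$, hence lies in $\lilBloch(\B_n)$; the sequence is bounded in the Bloch norm by the estimates already carried out there; and $f_k\to 0$ locally uniformly, since on $\norm{z}\le r<1$ the numerator is bounded independently of $k$ (because $\mod{1-\ip{z,a_k}}\ge 1-r$) while the denominator tends to $\infty$. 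Invoking the sequential characterization of compactness for operators into $H^\infty$ — the analogue of Lemma~\ref{suffcond}, whose easy ``compact $\Rightarrow$ norms tend to $0$'' half follows verbatim using continuity of point evaluations on $H^\infty$ — hypothesis $(b)$ forces $\supnorm{\psi(f_k\circ\varphi)}\to 0$. Since $f_k(a_k)=\log\frac{2}{1-\norm{a_k}^2}$, evaluating at $z_k$ gives
$$\mod{\psi(z_k)}\log\frac{2}{1-\norm{a_k}^2}=\mod{\psi(z_k)f_k(\varphi(z_k))}\le\supnorm{\psi(f_k\circ\varphi)}\longrightarrow 0.$$
As $\log\frac{2}{1-\norm{a_k}^2}\to\infty$, this first forces $\mod{\psi(z_k)}\to 0$; then from $\log\frac{2}{1-\norm{a_k}}=\log\frac{2}{1-\norm{a_k}^2}+\log(1+\norm{a_k})\le\log\frac{2}{1-\norm{a_k}^2}+\log 2$ we conclude $\mod{\psi(z_k)}\log\frac{2}{1-\norm{a_k}}\to 0$. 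Since $\{z_k\}$ was arbitrary, this is exactly $(c)$.

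The main obstacle is precisely this necessity step $(b)\Rightarrow(c)$: one must exhibit test functions that simultaneously lie in the little Bloch space, are uniformly bounded in Bloch norm, converge to $0$ locally uniformly, and yet take values at $a_k$ of size comparable to $\omega(a_k)\sim\log\frac{2}{1-\norm{a_k}}$. The squared–logarithm construction threads this needle, the square being essential to suppress the values away from the ``peak'' (securing local uniform convergence) while retaining growth $\log\frac{2}{1-\norm{a_k}^2}$ at $a_k$. A secondary point, since the excerpt records only the Bloch–target version, is to note the $H^\infty$–target sequential criterion for compactness, which is obtained by repeating the argument of Lemma~\ref{suffcond}.
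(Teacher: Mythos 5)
Your proof is correct, and it is essentially the paper's own method: the paper does not prove Theorem~\ref{B} directly (it cites Li and Stevi\'c \cite{LiStevic:08}), but your cycle $(a)\Rightarrow(b)\Rightarrow(c)\Rightarrow(a)$ is precisely the scheme the paper uses for the polydisk analogue, Theorem~\ref{Polydiskcharacompactness} — with $(c)\Rightarrow(a)$ obtained from $\omega(w)=\frac12\log\frac{1+\norm{w}}{1-\norm{w}}\le\log\frac{2}{1-\norm{w}}$ together with Theorem~\ref{suffcondinfty}, and $(b)\Rightarrow(c)$ obtained from the same squared-logarithm test functions $f_k$ from the proof of Theorem~\ref{theorem:wco bounded/compact B_n}, which as you note are uniformly bounded in Bloch norm, lie in $\lilBloch(\B_n)$ because they extend holomorphically past $\overline{\B_n}$, and tend to $0$ locally uniformly. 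Your handling of the two small points the paper leaves implicit — the sequential compactness criterion for operators into $H^\infty$ (the easy half of Lemma~\ref{suffcond}, via point evaluations and sup-norm convergence) and the passage from $\log\frac{2}{1-\norm{a_k}^2}$ to $\log\frac{2}{1-\norm{a_k}}$ using $\mod{\psi(z_k)}\to 0$ — is also sound.
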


The following is a consequence of Corollary~\ref{Ballcharaboundedness} and Theorem~\ref{B}. It follows immediately from the finiteness of $$\sup_{z \in \B_n}\mod{\psi(z)}\log\frac{1+\norm{z}}{1-\norm{z}},$$ which implies that $\psi$ is identically zero.

\begin{corollary} Let $\psi$ be a holomorphic function on $\B_n$.  Then the following are equivalent:
\begin{enumerate}
\item[(a)] $M_\psi:\Bloch(\B_n) \to H^\infty(\B_n)$ is bounded.
\item[(b)] $M_\psi:\lilBloch(\B_n) \to H^\infty(\B_n)$ is bounded.
\item[(c)] $M_\psi:\Bloch(\B_n) \to H^\infty(\B_n)$ is compact.
\item[(d)] $M_\psi:\lilBloch(\B_n) \to H^\infty(\B_n)$ is compact.
\item[(e)] $\psi$ is identically zero.
\end{enumerate}
\end{corollary}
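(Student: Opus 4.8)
The plan is to observe that the multiplication operator $M_\psi$ is exactly the weighted composition operator $W_{\psi,\varphi}$ with $\varphi$ equal to the identity self-map of $\B_n$, so that $\varphi(z)=z$ and $\norm{\varphi(z)}=\norm{z}$ for all $z\in\B_n$. With this identification, Corollary~\ref{Ballcharaboundedness}, read with $\varphi=\mathrm{id}$, tells us that conditions (a) and (b) of the present corollary are each equivalent to the requirement that $\psi\in H^\infty(\B_n)$ and that $$C:=\sup_{z\in\B_n}\mod{\psi(z)}\log\frac{1+\norm{z}}{1-\norm{z}}$$ be finite. Thus the whole corollary will follow once I show that this finiteness condition is equivalent to $\psi\equiv 0$.

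First I would clear away the trivial implications. If (e) holds, then $M_\psi$ is the zero operator, which is simultaneously bounded and compact from both $\Bloch(\B_n)$ and $\lilBloch(\B_n)$ into $H^\infty(\B_n)$, so (e) implies each of (a)--(d). In the other direction, a compact operator is in particular bounded, so (c) implies (a) and (d) implies (b); consequently it suffices to prove that either (a) or (b) forces $\psi\equiv 0$. One could alternatively invoke Theorem~\ref{B} to handle (c) and (d) directly, but this is not needed.

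The heart of the matter is the deduction of $\psi\equiv 0$ from the finiteness of $C$. For $\norm{z}=r<1$ the defining inequality gives $\mod{\psi(z)}\le C/\log\frac{1+r}{1-r}$, and since $\log\frac{1+r}{1-r}\to\infty$ as $r\to 1^-$, the quantity $M(r):=\max_{\norm{z}=r}\mod{\psi(z)}$ tends to $0$ as $r\to 1^-$. By the maximum modulus principle for holomorphic functions on $\B_n$, $M(r)=\max_{\norm{z}\le r}\mod{\psi(z)}$ is nonnegative and nondecreasing in $r$; a nonnegative nondecreasing function whose limit at $r=1$ is $0$ must vanish identically, whence $\psi\equiv 0$ on $\B_n$.

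The only step carrying any real content is this last maximum-modulus argument, and even it is routine; everything else is the bookkeeping of specializing the earlier characterization to $\varphi=\mathrm{id}$ together with the observation that compactness implies boundedness. Combining the implications then establishes that (a), (b), (c), (d), and (e) are all equivalent.
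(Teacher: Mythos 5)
Your proposal is correct and follows essentially the same route as the paper: specialize Corollary~\ref{Ballcharaboundedness} to $\varphi=\mathrm{id}$ and observe that the finiteness of $\sup_{z\in\B_n}\mod{\psi(z)}\log\frac{1+\norm{z}}{1-\norm{z}}$ forces $\psi\equiv 0$, with your maximum-modulus argument merely making explicit what the paper leaves as immediate. The only minor deviation is that you dispatch the compactness statements (c) and (d) via ``compact implies bounded'' plus the triviality of the zero operator instead of invoking Theorem~\ref{B}, which is a harmless and slightly leaner bit of bookkeeping.
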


We now prove the analogue of Theorem~\ref{B} for the polydisk.

\begin{theorem}\label{Polydiskcharacompactness}
Let $\psi$ be holomorphic on $\D^n$ and $\varphi$ a holomorphic self-map of $\D^n$. Then the following statements are equivalent:
\begin{enumerate}
\item[(a)] $W_{\psi,\varphi}:\Bloch(\D^n)\to H^{\infty}(\D^n)$ is compact.

\item[(b)] $W_{\psi,\varphi}:\Bloch_{0*}(\D^n)\to H^{\infty}(\D^n)$ is compact.

\item[(c)] $\psi\in H^\infty(\D^n)$ and  $$\lim_{\varphi(z)\to \partial \D^n}|\psi(z)|\sum_{j=1}^n\log\frac{1+\mod{\varphi_j(z)}}{1-\mod{\varphi_j(z)}}=0.$$
\end{enumerate}
\end{theorem}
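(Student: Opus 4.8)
The plan is to establish the cycle of implications $(a)\Rightarrow(b)\Rightarrow(c)\Rightarrow(a)$. The implication $(a)\Rightarrow(b)$ is immediate, since $\Bloch_{0*}(\D^n)$ is a closed subspace of $\Bloch(\D^n)$ carrying the same norm, so the restriction to $\Bloch_{0*}(\D^n)$ of a compact operator on $\Bloch(\D^n)$ is again compact. For $(c)\Rightarrow(a)$ I would invoke Theorem~\ref{suffcondinfty} together with the estimate $\omega(\varphi(z))\leq\frac12\sum_{j=1}^n\log\frac{1+\mod{\varphi_j(z)}}{1-\mod{\varphi_j(z)}}$ recorded in (\ref{omegarhobounds}): condition $(c)$ forces $\mod{\psi(z)}\omega(\varphi(z))\to 0$ as $\varphi(z)\to\partial\D^n$, and since $\psi\in H^\infty(\D^n)$, Theorem~\ref{suffcondinfty} yields the compactness of $W_{\psi,\varphi}\colon\Bloch(\D^n)\to H^\infty(\D^n)$. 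Thus the whole content of the theorem lies in the implication $(b)\Rightarrow(c)$, which I would prove by contraposition.

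First I would record the relevant sequential criterion: a compact $W_{\psi,\varphi}\colon\Bloch_{0*}(\D^n)\to H^\infty(\D^n)$ sends every bounded sequence $\{f_k\}$ in $\Bloch_{0*}(\D^n)$ tending to $0$ locally uniformly to a sequence with $\norm{\psi(f_k\circ\varphi)}_\infty\to 0$. Unlike Lemma~\ref{suffcond}, this direction is elementary and needs no Montel argument: any norm-convergent subsequence of $\{\psi(f_k\circ\varphi)\}$ converges to a function which is $0$ pointwise, because $f_k(\varphi(z))\to 0$ for each fixed $z$, hence its limit is $0$. Now suppose $(c)$ fails. Observing first that $\psi=W_{\psi,\varphi}1\in H^\infty(\D^n)$, the failure of $(c)$ produces $\delta>0$ and a sequence $\{z_k\}$ with $\varphi(z_k)\to\partial\D^n$, that is $\max_j\mod{\varphi_j(z_k)}\to 1$, and $\mod{\psi(z_k)}\sum_{j=1}^n\log\frac{1+\mod{\varphi_j(z_k)}}{1-\mod{\varphi_j(z_k)}}\geq\delta$ for all $k$.

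The crux is to reduce to a \emph{single} coordinate that simultaneously carries a positive share of the sum and tends to the boundary. Letting $m(k)$ be an index where $\mod{\varphi_j(z_k)}$ is largest and passing to a subsequence on which $m(k)\equiv m_0$ is constant, the monotonicity of $t\mapsto\log\frac{1+t}{1-t}$ gives $\sum_{j}\log\frac{1+\mod{\varphi_j(z_k)}}{1-\mod{\varphi_j(z_k)}}\leq n\log\frac{1+\mod{\varphi_{m_0}(z_k)}}{1-\mod{\varphi_{m_0}(z_k)}}$, so that, writing $a_k=\varphi_{m_0}(z_k)$, we obtain $\mod{\psi(z_k)}\log\frac{1+\mod{a_k}}{1-\mod{a_k}}\geq\delta/n$ with $\mod{a_k}\to 1$ by the choice of $m_0$. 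This is the main obstacle: the coordinate must be selected by the \emph{argmax} (which forces $\mod{a_k}\to 1$, the property that makes the test functions tend to $0$ locally uniformly), not by a pigeonhole applied to the summands, which would guarantee a positive share but not boundary approach. Using $\mod{a_k}\to 1$ one then passes from $\log\frac{1+\mod{a_k}}{1-\mod{a_k}}$ to $\log\frac{2}{1-\mod{a_k}^2}$ at the cost of an additive $\log 2$, still leaving $\mod{\psi(z_k)}\log\frac{2}{1-\mod{a_k}^2}\geq\delta/(2n)$ for $k$ large.

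Finally I would lift the one-variable squared-logarithm test functions of Theorem~\ref{theorem:wco bounded/compact B_n} to the polydisk by making them depend on the single variable $z_{m_0}$: set $f_k(z)=g_k(z_{m_0})$, where $g_k(w)=\bigl(\Log\frac{2}{1-\overline{a_k}w}\bigr)^2\big/\log\frac{2}{1-\mod{a_k}^2}$. Since $Q_{f_k}(z)=(1-\mod{z_{m_0}}^2)\mod{g_k'(z_{m_0})}$ by the polydisk formula for $Q_f$, the same Cauchy--Schwarz and logarithm estimates as in Theorem~\ref{theorem:wco bounded/compact B_n} bound $\beta_{f_k}=\beta_{g_k}$ uniformly, while $\mod{f_k(0)}\leq\log 2$; moreover $g_k\in\lilBloch(\D)$ for each fixed $k$ forces $f_k\in\Bloch_{0*}(\D^n)$, and $f_k\to 0$ locally uniformly because $\log\frac{2}{1-\mod{a_k}^2}\to\infty$. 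Evaluating at $z_k$ gives $\norm{\psi(f_k\circ\varphi)}_\infty\geq\mod{\psi(z_k)}\mod{g_k(a_k)}=\mod{\psi(z_k)}\log\frac{2}{1-\mod{a_k}^2}\geq\delta/(2n)$, which contradicts the sequential criterion and shows that a compact $W_{\psi,\varphi}$ must satisfy $(c)$, completing the cycle.
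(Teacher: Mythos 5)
Your proof is correct, and its essential ingredients coincide with the paper's: deriving $\psi\in H^\infty(\D^n)$ from boundedness, the sequential compactness criterion for maps into $H^\infty(\D^n)$ (which, as you note, is elementary in this direction), the implication $(c)\Rightarrow(a)$ via (\ref{omegarhobounds}) and Theorem~\ref{suffcondinfty}, and the single-coordinate squared-logarithm test functions $f_k(z)=\bigl(\Log\frac{c}{1-\overline{a_k}z_{m_0}}\bigr)^2\big/\log\frac{c}{1-\mod{a_k}^2}$ (the paper takes $c=4$, you take $c=2$; both work, with the same norm estimates as in the ball case). Where you genuinely depart from the paper is in the organization of $(b)\Rightarrow(c)$. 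The paper argues directly: it fixes a sequence $\{z^{(k)}\}$ with $\varphi(z^{(k)})\to\partial\D^n$ and treats the coordinates separately --- for indices $j$ with $\mod{\varphi_j(z^{(k)})}\to 1$ it applies the test functions in that coordinate, while for the remaining indices it uses the auxiliary limit $\lim_{k\to\infty}\mod{\psi(z^{(k)})}=0$, extracted from the boundedness condition (\ref{polybddcond}), to kill those summands. You instead argue by contraposition and collapse the entire sum onto the single argmax coordinate via the monotonicity of $t\mapsto\log\frac{1+t}{1-t}$, so you never need the auxiliary limit on $\mod{\psi(z^{(k)})}$ or the coordinate dichotomy at all; moreover, since contraposition permits free passage to subsequences, your route cleanly absorbs the subsequence bookkeeping that the paper's direct argument elides (strictly speaking, both ``there is $j$ with $\mod{\varphi_j(z^{(k)})}\to 1$'' and ``$\mod{\varphi_j(z^{(k)})}\not\to 1$ implies $\mod{\varphi_j(z^{(k)})}\leq r$ for all $k$'' require passing to subsequences). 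What the paper's direct version buys in exchange is the full-sum limit for an arbitrary boundary-approaching sequence in one pass, with the role of boundedness made explicit. Your observation that the coordinate must be chosen by the argmax --- which forces $\mod{a_k}\to 1$ and hence makes the test functions tend to $0$ locally uniformly --- rather than by a pigeonhole on the summands is exactly the right point of care, and your $\log 2$ adjustment, valid for large $k$ because $\log\frac{1+\mod{a_k}}{1-\mod{a_k}}\to\infty$, is sound.
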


\begin{proof} The implication $(a)\Longrightarrow (b)$ is obvious.  We now show $(b)\Longrightarrow (c)$.  Since $W_{\psi,\varphi}:\Bloch_{0*}(\D^n)\to H^{\infty}(\D^n)$ is bounded, by Theorem~\ref{Polydiskcharaboundedness}, $\psi\in H^\infty(\D^n)$. Suppose there exists a sequence $\{z^{(k)}\}$ in $\D^n$ such that $\varphi(z^{(k)})\to\partial D^n$ as $k\to \infty$. Then, there is a number $j\in\{1,\dots,n\}$ such that $|\varphi_j(z^{(k)})|\to 1$ as $k\to\infty$. Since (\ref{polybddcond}) holds, it follows that
\begin{eqnarray}\label{zerolimit} \lim_{k\to\infty}|\psi(z^{(k)})|=0.\end{eqnarray}  For any such index $j$ and for $z\in\D^n$, define $$f_k(z)=\frac{\left(\hbox{Log}\frac4{1-z_j\overline{\varphi_j(z^{(k)})}}\right)^2}{\log\frac4{1-|\varphi_j(z^{(k)})|^2}}.$$
As shown in the proof of Theorem~\ref{theorem:wco bounded/compact B_n} for the case of the ball, the sequence $\{f_k\}$ is bounded in $\Bloch(\D^n)$, converges to 0 locally uniformly in $\D^n$ and each function in the sequence is in $\Bloch_{0*}(\D^n)$, since it is holomorphic on the closure of $\D^n$. By the compactness of $W_{\psi,\varphi}:\Bloch_{0*}(\D^n)\to H^\infty(\D^n)$, we obtain
\begin{eqnarray}\nonumber |\psi(z^{(k)})|\log
\frac{1+\mod{\varphi_j(z^{(k)})}}{1-\mod{\varphi_j(z^{(k)})}}&\leq& |\psi(z^{(k)})f_k(\varphi(z^{(k)})|\leq \norm{\psi(f_k\circ \varphi)}_\infty\to 0\end{eqnarray}
as $k\to\infty$.

	Next, assume $j\in\{1,\dots,n\}$ is such that $|\varphi_j(z^{(k)})|\not\to 1$ as $k\to\infty$, so that there exists $r\in (0,1)$ such that $|\varphi_j(z^{(k)})|\leq r$ for all $k\in\N$. Then, by (\ref{zerolimit}) we obtain
$$|\psi(z^{(k)})|\log\frac{1+\mod{\varphi_j(z^{(k)})}}{1-\mod{\varphi_j(z^{(k)})}}\leq \log\frac{1+r}{1-r}|\psi(z^{(k)})|\to 0$$
as $k\to\infty$. Hence, combining the cases when $|\varphi_j(z^{(k)})|\to 1$ or $|\varphi_j(z^{(k)})|\not\to 1$ as $k\to\infty$, we deduce
$$\lim_{k\to\infty}|\psi(z^{(k)})|\sum_{j=1}^n\log\frac{1+\mod{\varphi_j(z^{(k)})}}{1-\mod{\varphi_j(z^{(k)})}}=0,$$
as desired.  Lastly, $(c)\Longrightarrow (a)$ follows at once from (\ref{omegarhobounds}) and Theorem~\ref{suffcondinfty}.
\end{proof}

%-------------------------------------------------------------------------------------------------
% Future Developments
%-------------------------------------------------------------------------------------------------
\section{Further Developments}\label{section:open questions}
In this section, we outline other topics of interest for weighted composition operators not considered in the previous sections.  This list is certainly not exhaustive.  Our intent is to point out some work that has been done in other settings and how it would pertain to the setting of the Bloch space on a bounded homogeneous domain.

\subsection{Isometries} A characterization of the isometric weighted composition operators on the Bloch space of the unit disk is not currently known, although the isometric multiplication operators and the isometric composition operators have been described in Theorem 3.1 of \cite{AllenColonna:08}, and Corollary 2 of \cite{Colonna:05} (see also \cite{MartinVukotic:07}, Theorem~1.1).  These results provide a means by which to construct isometric weighted composition operators.

In higher dimensions, the isometric multiplication operators acting on the Bloch space of a large class of bounded symmetric domains are precisely the constant functions of modulus 1 \cite{AllenColonna:08-I}. Yet, it is not known whether nontrivial isometric multiplication operators exist on a general bounded homogeneous domain. Conditions for a composition operator on the Bloch space of a bounded homogeneous domain to be an isometry were given in \cite{AllenColonna:07}. These conditions allow us to generate nontrivial examples of isometric weighted composition operators on the Bloch space for a large class of domains that have the unit disk as a factor.

\subsection{Spectrum} The spectrum of the multiplication operator on the Bloch space of the unit disk is known (\cite{AllenColonna:08}, Theorem 4.1), while the determination of the spectrum of the composition operator on the Bloch space of the unit disk is still an open problem. The authors determined the spectrum of the isometric composition operators on the Bloch space of the unit disk, and in turn, the spectrum of a large class of isometric weighted composition operators on the Bloch space of the unit disk. The spectrum of a non-isometric weighted composition operator has not been determined for a general class of symbols.

In higher dimensions, the spectrum of a class of isometric composition operators on the Bloch space of the unit polydisk has been determined (\cite{AllenColonna:07}, Theorem 7.1).  In \cite{AllenColonna:08-I} (Theorem~5.1), we showed that the spectrum of a multiplication operator on the Bloch space of a bounded homogeneous domain is the closure of the range of its symbol. On the other hand, in \cite{AllenColonna:08-I} (Corollary~3.6), we proved that the only bounded multiplication operators on the Bloch space of the polydisk $\D^n$ (for $n\geq 2$) are those whose symbol is constant. Thus, the only isometric multiplication operators are those induced by constant functions of modulus one and the corresponding spectrum reduces to the value of that constant.

\subsection{Essential norm}
The \textit{essential norm} of a bounded operator $T$ is the distance from $T$ to the compact operators, i.e. $\norm{T}_e = \inf \{\norm{T-K} : K \text{ is compact}\}$.  In \cite{MacCluerZhao:03}, MacCluer and Zhao established estimates on the essential norm of a weighted composition operator acting on the Bloch space of the unit disk.  They showed that $$\max\left\{A_{\psi,\varphi},\frac{1}{6}B_{\psi,\varphi}\right\} \leq \norm{W_{\psi,\varphi}}_e \leq A_{\psi,\varphi} + B_{\psi,\varphi},$$ where
$$\begin{aligned}
A_{\psi,\varphi} &= \lim_{s \to 1}\sup_{\mod{\varphi(z)}>s}\mod{\psi(z)}\mod{\varphi'(z)}\frac{1-\mod{z}}{1-\mod{\varphi(z)}}, \text{ and}\\
B_{\psi,\varphi} &= \lim_{s \to 1}\sup_{\mod{\varphi(z)}>s}\mod{\psi'(z)}(1-\mod{z}^2)\log\frac{1}{1-\mod{\varphi(z)}^2}.
\end{aligned}$$

	An estimate on the essential norm of the weighted composition operators on the Bloch space of the polydisk has been given by Li in \cite{Li:08}. To date, no results have appeared on the essential norm of a weighted composition operator acting on the Bloch space of the unit ball or other types of bounded homogeneous domains.

%-------------------------------------------------------------------------------------------------
% Bibliography
%-------------------------------------------------------------------------------------------------
\bibliographystyle{amsplain}
\bibliography{references.bib}
\end{document}